\documentclass[a4paper,12pt]{article}
\usepackage{amsthm}
\usepackage{extpfeil}

\setlength{\topmargin}{-.2in} \setlength{\oddsidemargin}{.0in}
\setlength{\textheight}{8.5in} \setlength{\textwidth}{6.35in}
\setlength{\footnotesep} {\baselinestretch\baselineskip}
\newlength{\abstractwidth}
\setlength{\abstractwidth}{\textwidth}
\addtolength{\abstractwidth}{-6pc}

\flushbottom \thispagestyle{empty} \pagestyle{plain}

\renewcommand{\thanks}[1]{\footnote{#1}} 

\newcommand{\be}{\begin{equation}}
\newcommand{\bea}{\begin{eqnarray}}
\newcommand{\eea}{\end{eqnarray}} \newcommand{\ee}{\end{equation}}
 
 \def\ba{\begin{eqnarray}}
\def\ea{\end{eqnarray}}


\def\o{\omega}

\def\o{\omega}

\def\al{\alpha}
\def\b{\beta}

\def\e{\varepsilon}

\def\o{\omega}

\def\ti{\tilde}

\def\Z{{\bf Z}}

\def\R{{\bf R}}

\def\cO{{\cal O}}

\def\cO{{\cal O}}

\def\[{{\bf [}}
\def\]{{\bf ]}}

\def\pl{\partial}



\begin{document}
\newtheorem{theorem}{Theorem}
\newtheorem{proposition}{Proposition}
\newtheorem{lemma}{Lemma}
\newtheorem{corollary}{Corollary}
\newtheorem{definition}{Definition}
\newtheorem{conjecture}{Conjecture}
\newtheorem{example}{Example}
\newtheorem{claim}{Claim}

\begin{centering}
 
\textup{\LARGE\bf The Yang-Mills flow and the Atiyah-Bott formula on compact K\"ahler manifolds}

\vspace{10 mm}

\textnormal{\large Adam Jacob}

\vspace{.5 in}
\begin{abstract}
{\small

We study the Yang-Mills flow on a holomorphic vector bundle $E$ over a compact K\"ahler manifold $X$. Along a solution of the flow, we show that the curvature endomorphism $i\Lambda F(A_t)$ approaches in $L^2$ an endomorphism with constant eigenvalues given by the slopes of the quotients from the Harder-Narasimhan filtration of $E$. This proves a sharp lower bound for the Hermitian-Yang-Mills functional and thus the Yang-Mills functional,  generalizing to arbitrary dimension a formula of Atiyah and Bott first proven on Riemann surfaces. Furthermore, we show any reflexive extension to all of $X$ of the limiting bundle $E_\infty$  is isomorphic to $Gr^{hns}(E)^{**}$, verifying a conjecture of Bando and Siu.}

\end{abstract}

\vspace{95mm}

\textnormal{ Department of Mathematics,
Harvard University,
Cambridge, MA 02138\\
e-mail: ajacob@math.harvard.edu}\\

\end{centering}

\begin{normalsize}

\newpage
\section{Introduction}
Given a vector bundle $E$ over a compact manifold $X$, the Yang-Mills flow provides a natural approach to constructing Yang-Mills connections on $E$. In addition to their original applications to particle physics, Yang-Mills connections have proven to be useful and important tools in the study of gauge theory and geometry. If $X$ is a smooth $4$-manifold, the moduli space of self-dual Yang-Mills connections reflects deep topological information about $X$ (see $\cite{DonK}$). When $X$ is a compact K\"ahler manifold $X$ of general dimension, if $A$ is a smooth Yang-Mills connection compatible with a given holomorphic structure on $E$, then the curvature endomoprhism $i\Lambda F_A$ will have locally constant eigenvalues determined by the Harder-Narasimhan type of $E$. In fact, such a Yang-Mills connection will decompose $E$ into a direct sum of stable bundles whose slopes corresponds to the slopes of the quotients of the Harder-Narasimhan filtration $\cite{Kob}$. Because of this behavior one would expect existence of Yang-Mills connections to be intimately related to the slope and stability of the original bundle, and this expectation ends up being correct. Throughout this paper we assume $X$ is K\"ahler and $E$ is holomorphic.

If $E$ is indecomposable, a Yang-Mills connection $A$ must be Hermitian-Einstein, meaning that $i\Lambda F_A=\lambda I$ for a topological constant $\lambda$, and it is known such a connection can exists if and only if $E$ is stable in the sense of Mumford-Takemoto. This famous existence theorem for Hermitian-Einstein connections was first proven for curves by Narasimhan and Seshadri $\cite{NS}$, then later reproved by Donaldson in \cite{Don0} following an analytic approach. Donaldson proved the existence of Hermitian-Einstein connections on algebraic surfaces in $\cite{Don1}$, and the result was proven for arbitrary compact K\"ahler manifolds in the celebrated paper of Uhlenbeck and Yau $\cite{UY}$. While Uhlenbeck and Yau proved their theorem using the method of continuity, Donaldson utilized the parabolic approach of the Yang-Mills flow, showing the flow converges to a smooth limit if $E$ is stable. His approach is interesting in that it establishes a relationship between convergence of a parabolic PDE and the algebraic-geometric condition of stability. Utilizing many of the important estimates from $\cite{Don1}$ and $\cite{UY}$, the parabolic approach has been extended to many other cases, most notably to all smooth algebraic varieties by Donaldson $\cite{Don2}$, to Higgs bundles by Simpson $\cite{Simp}$, to reflexive sheaves by Bando-Siu $\cite{BaS}$, as well as an exposition by Siu in $\cite {Siu}$. 

Of course if $E$ is indecomposable and not stable, then the flow can not converge. The main purpose of this paper is to show that nevertheless the limiting properties of the Yang-Mills flow once again reflect many of the geometric properties of $E$, and in many of the same ways as does a Yang-Mills connection. In particular we generalize to arbitrary dimension a theorem of Daskalopoulos and Wentworth from $\cite{DW}$, and we briefly explain their result here.  They show that on a K\"ahler surface $X$, along the Yang-Mills flow the trace of the curvature approaches in $L^p$ an endomorphism with locally constant eigenvalues corresponding to the Harder-Narasimhan type of $E$. Furthermore they prove that away from a bubbling set and along a subsequence, the Yang-Mills flow converges to a limiting Yang-Mills connection on a new bundle $E_\infty$ with a possibly different topology. $E_\infty$ extends over the singular set, and Daskalopoulos and Wentworth prove this extension is isomorphic to the bundle $Gr^{hns}(E)^{**}$, the double dual of the graded quotients of the Harder-Narasimhan-Seshadri filtration. 

This paper builds on previous results of the author \cite{J1, J2} which generalize Daskalopoulos and Wentworth's result to semi-stable bundles over $X$ of arbitrary dimension. Here we extend our work to the general case, in which $E$ is an arbitrary holomorphic vector bundle over $X$. Equip $E$ with a Hermitian metric $H$. Let $Q^i$ be the quotients of the Harder-Narasimhan filtration, and let $\pi^i$ denote the  orthogonal projections onto the subsheaves of this filtration. Define the endomorphism:
\be
\label{endokey}
\Psi_H=\sum_i \mu(Q^i)(\pi^i-\pi^{i-1}).
\ee
This is an endomorphism with locally constant eigenvalues determined by the slopes of the quotients of the Harder-Narsimhan filtration, and because it changes along the Yang-Mills flow, we denote the evolving endomorphism by $\Psi_t$. Although the projections that make up $\Psi_t$ are only smooth where the subsheaves of the Harder-Narasimhan filtration are locally free, we know they are at least in $L^2_1$. We have the following theorem:
\begin{theorem}
\label{firstmaintheorem}
Let $E$ be a holomorphic vector bundle over a compact K\"ahler manifold $X$. Given a fixed metric $H$ and any initial integrable connection $A$ on $E$, let $A_t$ be a smooth solution of the Yang-Mills flow starting with this initial connection. Then for all $\epsilon>0$, there exists a time $t_0$ such that for $t>t_0$, we have
\be
||i\Lambda F_{A_t}-\Psi_{t}||^2_{L^2}<\epsilon.\nonumber
\ee
\end{theorem}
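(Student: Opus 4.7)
The plan is to combine an Atiyah-Bott-type Chern-Weil lower bound for $\|\Lambda F_A\|_{L^2}^2$ with the fact that this quantity is monotonically nonincreasing along the Yang-Mills flow, and then to identify the infimum with $\|\Psi\|_{L^2}^2$ by constructing near-minimizers built from the author's previous semistable results \cite{J1,J2}.

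The first step is to establish an Atiyah-Bott inequality at the level of $L^2$ pairings. For each subsheaf $F^i$ of the Harder-Narasimhan filtration, the Chern-Weil identity for subsheaves, applied on the smooth locus of $F^i$ and extended across its codimension-two singular set using the $L^2_1$ regularity of $\pi^i$, yields
\be
\int_X \tr(\pi^i \cdot i \Lambda F_A)\,{\omega^n\over n!} = c_n \deg(F^i) + \|\bar\partial_A \pi^i\|^2_{L^2}\nonumber
\ee
for an explicit constant $c_n$. Weighting by $\mu(Q^i)$ and summing telescopically gives the pairing bound $\langle \Lambda F_A, \Psi \rangle_{L^2} \geq \|\Psi\|^2_{L^2}$, so expanding $\|\Lambda F_A - \Psi\|^2_{L^2}$ produces the crucial estimate
\be
\|\Lambda F_A - \Psi\|^2_{L^2} \leq \|\Lambda F_A\|^2_{L^2} - \|\Psi\|^2_{L^2}.\nonumber
\ee
The theorem therefore reduces to showing $\|\Lambda F_{A_t}\|^2_{L^2} \to \|\Psi_t\|^2_{L^2}$ along the flow.

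For the upper bound on $\lim_t \|\Lambda F_{A_t}\|^2_{L^2}$, integrability of $A_0$ is preserved by the flow, so $F_{A_t}^{0,2}=0$ throughout. Combined with the topological invariance of $\int_X \tr(F_A \wedge F_A)\wedge \omega^{n-2}$ and the orthogonal decomposition of $F_A^{1,1}$ into its trace and primitive parts, this gives $\|\Lambda F_{A_t}\|^2_{L^2} = \half \|F_{A_t}\|^2_{L^2} + \mbox{(topological constant)}$. Since YM flow is the gradient flow of $\|F_A\|^2_{L^2}$, the left side is monotonically nonincreasing, so together with Step~1 it converges to a limit $L \geq \|\Psi\|^2_{L^2}$. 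To show $L = \|\Psi\|^2_{L^2}$, I would construct approximate minimizers using the HN filtration: on each semistable quotient $Q^i$ the author's previous work \cite{J1,J2} provides a flow whose $\Lambda F$ approaches $\mu(Q^i) I_{Q^i}$ in $L^2$, and these can be assembled through the successive extensions $0 \to F^{i-1} \to F^i \to Q^i \to 0$ by dilating the HN filtration so that the block-off-diagonal second-fundamental-form contributions vanish in the limit. This produces integrable connections with $\|\Lambda F\|^2_{L^2}$ tending to $\|\Psi\|^2_{L^2}$, forcing $L = \|\Psi\|^2_{L^2}$.

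The principal technical obstacle lies in the codimension-two singularities of the HN subsheaves $F^i$: the projections $\pi^i$ are only $L^2_1$, and the approximate minimizers constructed above tend to concentrate curvature near the singular set. Controlling these singularities, both in the Chern-Weil identity underlying Step~1 and in the gluing construction of the near-minimizers, requires the Bando-Siu machinery for reflexive sheaves, and is deeply tied to the companion assertion of the paper identifying any reflexive extension of $E_\infty$ with $Gr^{hns}(E)^{**}$.
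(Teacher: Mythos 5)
Your Step 1 is sound: the Chern--Weil identity for the Harder--Narasimhan subsheaves, summed by parts against the slope-decreasing weights $\mu(Q^i)$, does give $\langle \Lambda F_A,\Psi\rangle_{L^2}\geq \|\Psi\|^2_{L^2}$ and hence $\|\Lambda F_A-\Psi\|^2_{L^2}\leq \|\Lambda F_A\|^2_{L^2}-\|\Psi\|^2_{L^2}$; this is essentially the inequality $\eqref{keyineq}$ in the paper, and the reduction to showing $\|\Lambda F_{A_t}\|^2_{L^2}\to\|\Psi\|^2_{L^2}$ is valid because $\|\Psi_t\|^2_{L^2}=\sum_i\mu_i^2$ is a constant determined by the Harder--Narasimhan type. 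The gap is in Step 3. Producing a sequence of connections (or metrics) in the complexified gauge orbit with $\|\Lambda F\|^2_{L^2}\to\|\Psi\|^2_{L^2}$ only shows that the \emph{infimum} of the Hermitian--Yang--Mills energy equals $\|\Psi\|^2_{L^2}$ --- that is Theorem $\ref{lastcor}$. It does not show that the particular gradient-flow trajectory starting from the given $A_0$ drives the energy down to that infimum: a gradient flow converges to a critical value, not in general to the global minimum, and a priori $L=\lim_t\|\Lambda F_{A_t}\|^2_{L^2}$ could equal $\sum_i\lambda_i^2$ for a Yang--Mills critical type $\vec\lambda$ strictly dominating $\vec\mu$. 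The Hong--Tian limit together with the Chern--Weil bound only yields $\vec\mu\leq\vec\lambda$, hence $\sum\lambda_i^2\geq\sum\mu_i^2$, which is the wrong direction; ruling out this jumping up of the Harder--Narasimhan type in the limit is the entire content of the theorem, and your proposal supplies no mechanism for it.

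The paper closes exactly this gap with the $P$-functional: it is built so that $\pl_t P(H_0,H_t)=\int_X{\rm Tr}((\Lambda F_t-\Psi_t)h_t^{-1}\pl_t h_t)\,\o^n$, whence along the Donaldson heat flow $\|\Lambda F_t-\Psi_t\|^2_{L^2}\leq -2\,\pl_t P(H_0,H_t)$. The hard analytic input is that $P$ is bounded below, proved by regularizing the Harder--Narasimhan filtration by blowups and decomposing $P$ into the Donaldson functionals of the semistable quotients plus second-fundamental-form terms, then invoking the lower bound of the Donaldson functional on semistable bundles from \cite{J1}. Integrability of $\|\Lambda F_t-\Psi_t\|^2_{L^2}$ in time then gives convergence to zero along a subsequence $t_m$, a differential inequality for $Y(t)=\|\Lambda F_t-\Psi_t\|^2_{L^2}$ (using that $\|\bar\nabla\Lambda F\|^2_{L^2}$ and $\|\nabla\pi^i\|^2_{L^2}$ tend to zero) upgrades this to all $t\to\infty$, and a conjugation identity transfers the statement from the Donaldson heat flow to the Yang--Mills flow. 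Without a monotone quantity of this kind, your assertion that $L=\|\Psi\|^2_{L^2}$ is unsupported; the codimension-two singular sets you flag are a real but secondary issue, handled in the paper by the blowup regularization.
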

The existence of such a connection for each $\epsilon>0$ is called an $L^2$ approximate Hermitian structure on $E$ (see Definition $\ref{approxHS}$ below). As an immediate consequence we get a sharp lower bound for the Hermitian-Yang-Mills functional $||i\Lambda F(\cdot)||^2_{L^2}$, and since this functional is related to the Yang-Mills functional by a topological constant, we get a sharp lower bound for the Yang-Mills functional as well. In fact we are able to generalize a formula of Atiyah and Bott from $\cite {AB}$. Let $\cal F$ be a slope decreasing filtration of $E$, and let ${\cal Q}^i$ be the quotients of this filtration. Then we define:
\be
\Phi({\cal F})^2=\sum_{i=0}^q\mu({\cal Q}^i)^2rk({\cal Q}^i).\nonumber
\ee
Normalize $\o$ to have volume one, and let $A$ be an integrable connection. We have the following result:
\begin{theorem}
\label{lastcor}
For all holomorphic vector bundles $E$ over $X$ the following formula holds:
\be
\label{AB}
\inf_A||i\Lambda F_A||^2_{L^2}=\sup_{\cal F}\Phi({\cal F})^2.\nonumber
\ee
\end{theorem}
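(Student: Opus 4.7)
The plan is to prove the two inequalities $\inf_A\|\Lambda F_A\|_{L^2}^2\le\sup_{\cal F}\Phi({\cal F})^2$ and $\sup_{\cal F}\Phi({\cal F})^2\le\inf_A\|\Lambda F_A\|_{L^2}^2$ separately. The first bound is essentially immediate from Theorem~\ref{firstmaintheorem}: the Harder--Narasimhan filtration ${\cal F}_{HN}$ is slope-decreasing, and its associated endomorphism $\Psi_H$ from (\ref{endokey}) satisfies $\|\Psi_H\|_{L^2}^2=\Phi({\cal F}_{HN})^2$, since the $\pi^i-\pi^{i-1}$ are mutually orthogonal projections of rank $rk(Q^i)$ (so pointwise $\tr(\Psi_H^2)=\sum_i\mu(Q^i)^2\,rk(Q^i)$) and $\omega$ has unit volume. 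Running the Yang--Mills flow from an arbitrary integrable initial connection, the triangle inequality $\|\Lambda F_{A_t}\|_{L^2}\le\|\Lambda F_{A_t}-\Psi_t\|_{L^2}+\|\Psi_t\|_{L^2}$ together with Theorem~\ref{firstmaintheorem} shows that $\|\Lambda F_{A_t}\|_{L^2}^2$ can be made arbitrarily close to $\Phi({\cal F}_{HN})^2$, so $\inf_A\|\Lambda F_A\|_{L^2}^2\le\Phi({\cal F}_{HN})^2\le\sup_{\cal F}\Phi({\cal F})^2$.

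The reverse bound is the classical Atiyah--Bott pairing argument carried out in the present (possibly singular) setting. Fix any slope-decreasing filtration ${\cal F}:0={\cal E}_0\subset\cdots\subset{\cal E}_q=E$ with quotients ${\cal Q}^i$ of slope $\mu_i$ and $L_1^2$ orthogonal projections $\pi^i$, and form $\Psi_{\cal F}=\sum_{i=1}^q\mu_i(\pi^i-\pi^{i-1})$. Abel summation rewrites $\Psi_{\cal F}=\mu_q I+\sum_{i=1}^{q-1}(\mu_i-\mu_{i+1})\pi^i$ with every coefficient $\mu_i-\mu_{i+1}\ge 0$. Combining the Chern--Weil identity
\be
\int_X\tr(\pi^i\sqrt{-1}\Lambda F_A)=\deg({\cal E}_i)+\int_X|\bar\partial\pi^i|^2\nonumber
\ee
(valid for $L_1^2$ projections onto coherent subsheaves) with the algebraic identity $\mu_q\deg(E)+\sum_{i<q}(\mu_i-\mu_{i+1})\deg({\cal E}_i)=\Phi({\cal F})^2$ (a direct computation using $\deg({\cal E}_i)=\sum_{j\le i}\mu_j\,rk({\cal Q}^j)$) yields
\be
\int_X\tr(\Psi_{\cal F}\sqrt{-1}\Lambda F_A)=\Phi({\cal F})^2+\sum_{i=1}^{q-1}(\mu_i-\mu_{i+1})\int_X|\bar\partial\pi^i|^2\ge\Phi({\cal F})^2.\nonumber
\ee
Since $\|\Psi_{\cal F}\|_{L^2}=\Phi({\cal F})$ by the same orthogonality computation as above, Cauchy--Schwarz forces $\Phi({\cal F})^2\le\Phi({\cal F})\,\|\Lambda F_A\|_{L^2}$, hence $\Phi({\cal F})^2\le\|\Lambda F_A\|_{L^2}^2$. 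Taking the supremum over $\cal F$ and the infimum over $A$ completes the proof.

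The main obstacle is the rigorous justification of the Chern--Weil identity in the singular setting: for a general slope-decreasing filtration the subsheaves ${\cal E}_i$ are only coherent, so the $\pi^i$ are smooth only off a codimension-two analytic subset and lie in $L_1^2$ globally. The integration by parts that produces the $|\bar\partial\pi^i|^2$ term therefore has to be controlled by cutoff arguments near the singular locus. This type of analysis is by now standard in the Uhlenbeck--Yau and Bando--Siu framework and has already been carried out in the author's prior work \cite{J1}, \cite{J2}; once invoked, the proof here reduces to the Abel-summation and Cauchy--Schwarz manipulations outlined above.
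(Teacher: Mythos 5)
Your proposal is correct, and while your ``easy'' direction ($\inf_A||\Lambda F_A||^2_{L^2}\le\Phi({\cal F}_{HN})^2$ via Theorem \ref{firstmaintheorem} together with $||\Psi||^2_{L^2}=\Phi({\cal F}_{HN})^2$) coincides with the paper's, your treatment of the reverse inequality takes a genuinely different route. The paper deduces $||\Lambda F_A||^2_{L^2}\ge||\Psi||^2_{L^2}$ from Proposition \ref{AB1}, whose proof runs the Yang--Mills flow from $A$, invokes the Hong--Tian convergence theorem \cite{HT} to produce a limiting Yang--Mills connection with locally constant eigenvalues $\vec\lambda$, establishes $\vec\mu\le\vec\lambda$ in the dominance order via a degree-comparison lemma, and then applies the convexity result of Atiyah--Bott (Proposition 12.6 of \cite{AB}); it also quotes, without proof, the fact that the supremum of $\Phi$ over slope-decreasing filtrations is attained by the Harder--Narasimhan filtration. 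Your Abel-summation/Chern--Weil/Cauchy--Schwarz pairing argument (essentially the one in \cite{Don3}) proves $\Phi({\cal F})^2\le||\Lambda F_A||^2_{L^2}$ directly for \emph{every} slope-decreasing filtration ${\cal F}$ and every integrable $A$, so it bypasses the Hong--Tian machinery, the dominance-order combinatorics, and the separate maximality claim for the Harder--Narasimhan filtration all at once; the only analytic input is the Chern--Weil identity for $L^2_1$ projections onto coherent subsheaves, which the paper itself uses freely (for instance in deriving $\eqref{sffb}$). What the paper's longer route buys is material it needs elsewhere anyway: Proposition \ref{AB1} is the engine behind inequality $\eqref{keyineq}$ in the proof of Proposition \ref{DHFAPS}, and the eigenvalue comparison $\vec\mu\le\vec\lambda$ is reused in Section 5 to identify the Harder--Narasimhan type of the limiting connection. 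Two cosmetic points: in the paper's conventions $\Lambda F$ is already normalized so that $deg(E)=\int_X{\rm Tr}(\Lambda F)\,\o^n$, so the $\sqrt{-1}$ in your Chern--Weil identity should be dropped; and your final Cauchy--Schwarz step implicitly divides by $\Phi({\cal F})$, which is harmless since the claimed inequality is trivial when $\Phi({\cal F})=0$.
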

We note that the supremum on the right is attained by the Harder-Narasimhan filtration of $E$. This formula is the higher dimensional generalization of a formula first proven on Riemann surfaces by Atiyah and Bott in $\cite{AB}$. We also direct the reader to the paper of Donaldson $\cite{Don3}$, in which he states the Atiyah-Bott formula and proves a generalization relating the Calabi functional to test configurations.

We now explain our main result, which is an identification of the limit of the Yang-Mills flow. First, given a sequence of connections $A_j$ along the Yang-Mills flow, we define the analytic bubbling set:
\be
Z_{an} =\bigcap_{r>0}\{x\in X\,|\liminf_{j\rightarrow\infty}\,r^{4-2n}\int_{B_r(x)}|F({A_j})|^2\o^n\geq \epsilon\},\nonumber
\ee
for some constant $\e>0$. This set is the same singular set used by Hong and Tian in $\cite{HT}$. Our complete result is as follows:
\begin{theorem}
\label{main theorem}
Let $E$ be a holomorphic vector bundle over a compact K\"ahler manifold $X$. Let $A_t$ be a connection on $E$ evolving along the Yang-Mills flow. Then there exists a subsequence of times $t_j$ such that on $X\backslash Z_{an}$, the sequence $A_{t_j}$ converges (modulo gauge transformations) in $C^\infty$  to a limiting connection $A_\infty$ on a limiting bundle $E_\infty$. $E_\infty$ extends to all of $X$ as a reflexive sheaf $\hat E_\infty$ which is isomorphic to the double dual of the stable quotients of the graded Harder-Narasimhan-Seshadri filtration, denoted $Gr^{hns}(E)^{**}$, of $E$.
\end{theorem}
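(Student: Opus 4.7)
The plan is to begin with Uhlenbeck-type compactness along the flow. Since the Yang-Mills functional is nonincreasing along $A_t$, the quantity $||F_{A_t}||^2_{L^2}$ is uniformly bounded. Combined with the definition of $Z_{an}$, which coincides with the Hong-Tian singular set, one applies $\epsilon$-regularity together with Uhlenbeck's gauge fixing and weak compactness to extract a subsequence $A_{t_j}$, a limiting bundle $E_\infty$ on $X\setminus Z_{an}$, and a smooth Yang-Mills connection $A_\infty$ on $E_\infty$ so that, after gauge transformations, $A_{t_j}\to A_\infty$ in $C^\infty_{\mathrm{loc}}(X\setminus Z_{an})$. The set $Z_{an}$ has finite real $(2n-4)$-dimensional Hausdorff measure by the monotonicity formula for finite-energy Yang-Mills connections.

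\textbf{Structure of the limit and reflexive extension.} Theorem \ref{firstmaintheorem} gives $\Lambda F_{A_t}\to \Psi_t$ in $L^2$, and together with the smooth convergence off $Z_{an}$ this forces $\Lambda F_{A_\infty}$ to have locally constant eigenvalues equal to the slopes $\mu(Q^i)$ of the Harder-Narasimhan quotients of $E$. Taking generalized eigenspaces produces an $A_\infty$-parallel smooth splitting $E_\infty=\bigoplus_i E_\infty^i$, where each $E_\infty^i$ carries an admissible Hermitian-Einstein structure of slope $\mu(Q^i)$ on $X\setminus Z_{an}$. Since $Z_{an}$ has finite codimension-two Hausdorff measure, the Bando-Siu removable singularity theorem applies to each summand, extending it uniquely to a reflexive, polystable sheaf $\hat E_\infty^i$ of slope $\mu(Q^i)$ on all of $X$. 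Set $\hat E_\infty=\bigoplus_i \hat E_\infty^i$.

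\textbf{Identification with $Gr^{hns}(E)^{**}$.} The principal obstacle is establishing the isomorphism $\hat E_\infty\cong Gr^{hns}(E)^{**}$. The strategy is to use the orthogonal projections $\pi^i_t$ onto the HN subsheaves, which are only $L^2_1$ but appear directly in $\Psi_t$. The $L^2$ convergence of $\Lambda F_{A_t}$ to $\Psi_t$, combined with the smooth convergence of $A_{t_j}$ off $Z_{an}$, should yield weak $L^2_1$ limits $\pi^i_\infty$ that reproduce the smooth splitting $\bigoplus_i E_\infty^i$ on $X\setminus Z_{an}$. Interpreting $\pi^i_\infty$ as generalized sheaf homomorphisms from $E$ to $\hat E_\infty$, one obtains nontrivial morphisms from the HN subsheaves $S^i\subset E$ into $\bigoplus_{j\le i}\hat E_\infty^j$. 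Since each HN quotient $Q^i=S^i/S^{i-1}$ is semistable of slope $\mu(Q^i)$ and $\hat E_\infty^i$ is polystable of the same slope, the standard slope-stability dichotomy (no nonzero maps between semistable sheaves to pieces of strictly smaller slope) combined with an induction along the HN filtration identifies $(Q^i)^{**}$ with $\hat E_\infty^i$ up to isomorphism. Refining HN to the Seshadri filtration (whose quotients are stable) and invoking uniqueness of the Jordan-H\"older decomposition for polystable reflexive sheaves then matches each stable summand on both sides and yields $\hat E_\infty\cong Gr^{hns}(E)^{**}$. The main technical difficulty is making these sheaf maps globally well-defined across $Z_{an}$, where the projections and $E_\infty$ are both singular; this will require combining the Bando-Siu extension with careful $L^2_1$ estimates on the HN projections to promote the analytic limits to bona fide sheaf morphisms on all of $X$.
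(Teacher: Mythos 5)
Your first two steps (extraction of the subsequence via Hong--Tian/Uhlenbeck compactness, and the identification of the eigenvalues of $\Lambda F_{A_\infty}$ with the Harder--Narasimhan slopes via Theorem \ref{firstmaintheorem}, followed by the eigenspace splitting and the Bando--Siu reflexive extension) match the paper's argument. The gap is in the identification step.

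The concrete problem is your proposed mechanism for producing the comparison morphisms. The limits $\pi^i_\infty$ of the evolving projections are endomorphisms of $E_\infty$ (projections onto the limits of the subsheaves $w_j(S^i)$), not maps from $E$ to $\hat E_\infty$; there is no canonical holomorphic map from $(E,\bar\pl_0)$ to $(E_\infty,\bar\pl_\infty)$, since the underlying identity of smooth bundles is not holomorphic for the limiting structure. So "interpreting $\pi^i_\infty$ as generalized sheaf homomorphisms from $E$ to $\hat E_\infty$" does not produce the nonzero morphisms $S^i\to E_\infty$ (or $\ti Q^k_i\to (\ti Q^k_i)_\infty$) that your slope-stability/Jordan--H\"older argument requires as input. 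What the paper does instead (following Donaldson and Daskalopoulos--Wentworth, with the analysis carried out in \cite{J2}) is to consider the holomorphic inclusions $f^i:S^i\to E$ transported by the complexified gauge transformations $w_j$, normalize them, and prove that a subsequence converges to a \emph{nonzero} limiting holomorphic map into $E_\infty$; the nonvanishing of the limit is the delicate point, and only after that does the Kobayashi-type stability argument upgrade the map to an isomorphism of the stable Seshadri quotients. Without this construction your induction along the filtration has nothing to run on: equality of slopes and (semi/poly)stability alone do not force two sheaves to be isomorphic. The roles your proposal correctly assigns to the projections --- namely that $\|\bar\pl_j\pi^i_j\|_{L^2}\to 0$ (via the Chern--Weil identity and Theorem \ref{firstmaintheorem}) forces the limiting filtration to split $E_\infty$ holomorphically, and the analogous statement for the Seshadri projections via the $L^1$ approximate Hermitian--Einstein structure on each $Q^i$ --- are exactly how the paper uses them, but they establish the structure of $E_\infty$, not the isomorphism with $Gr^{hns}(E)^{**}$. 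Finally, for the extension across $Z$, the paper argues via Shiffman's removable singularity theorem for sections of the reflexive sheaf $Gr^{hns}(E)^{**}$ together with Siu's uniqueness of reflexive extensions, rather than re-deriving estimates on the projections; your sketch of this last step is plausible but should be replaced by that cleaner argument.
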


In $\cite{HT}$, Hong and Tian prove that away from $Z_{an}$, a subsequence along the Yang-Mills flow $A_j$ converges smoothly to a limiting Yang-Mills connection on a limiting bundle $E_\infty$. They also prove that $Z_{an}$ is a holomorphic subvariety of $X$, although we do not utilize this result. By the work of Bando and Siu $\cite{BaS}$, we know $E_\infty$ extends to all of $X$ as a reflexive sheaf $\hat E_\infty$. In this paper we construct an explicit isomorphism between $\hat E_\infty$ and $Gr^{hns}(E)^{**}$, verifying a conjecture of Bando and Siu from $\cite{BaS}$.

Here we remark that the results of this paper are not a full generalization of the work of Daskalopoulos and Wentworth. In $\cite{DW2}$, the authors prove that the bubbling set $Z_{an}$ is in fact equal to the singular set of $Gr^{hns}(E)$, in other words they show the Yang-Mills flow bubbles precisely where the sheaf $Gr^{hns}(E)$ fails to be locally free. Although the results of this paper imply that the singular set of  $Gr^{hns}(E)$ is contained in the analytic singular set $Z_{an}$, the other inclusion does not follow. Sibley and Wentworth are able to accomplish the other inclusion in \cite{SB}, using mostly algebraic methods.

We now briefly describe the proofs of our main results. Our first step is to construct an $L^2$ approximate Hermitian structure on $E$, using a similar method to that of $\cite{J1}$. We begin by defining a new relative functional on the space of Hermitian metrics, denoted $P(H_0,H)$, which is closely related to Donaldson's functional (see $\cite{Don1, Simp, Siu}$). For a fixed metric $H_0$, the $P$-functional is designed so that if $H_t$ is a smooth path of metrics, then the derivative of the $P$-functional along this path is given by:
 \be
 \pl_t\,P(H_0,H_t)=\int_X{\rm Tr}((i\Lambda F-\Psi)H_t^{-1}\pl_t H_t)\o^n.\nonumber
 \ee
We then show that along a solution of the Donaldson heat flow, the following inequality holds:
\be
||i\Lambda F-\Psi||^2_{L^2}\leq- \pl_t\,P(H_0,H_t).\nonumber
\ee
If  $P(H_0,H_t)$ is bounded from below, then $||i\Lambda F-\Psi||^2_{L^2}$ is integrable in time from zero to infinity. This shows $||i\Lambda F-\Psi||^2_{L^2}$ goes to zero along a subsequence, which along with a simple differential inequality proves an $L^2$ approximate Hermitian structure is realized along the Donaldson heat flow. The lower bound the $P$-functional is the difficult step, and is proven in a similar fashion to the lower bound of the Donaldson functional for semi-stable bundles $\cite{J1}$. The key difficulty lies in adapting the blowup procedure from $\cite{J1}$ to regularize the quotients of the Harder-Narasimhan filtration. We show the value of the functional is preserved during this regularization, and take advantage of the fact that on the regularized filtration the $P$-functional decomposes into positive terms plus the sum of the Donaldson functionals on the quotients of the filtration.  We know the Donaldson functional is bounded below on the semi-stable quotients, and thus the $P$-functional is bounded below. 
 
 Once we have established the existence of an $L^2$ approximate Hermitian structure along the Donaldson heat flow, we show such a structure is also realized along the Yang-Mills flow, proving Theorem $\ref{firstmaintheorem}$. Theorem $\ref{lastcor}$ follows. The proof of Theorem $\ref{main theorem}$ requires explicit construction of an isomorphism between $\hat E_\infty$ and $Gr^{hns}(E)^{**}$, which is quite similar to the construction of an isomorphism from $\cite{J2}$ in the case of semi-stable bundles. In our general case, we use Theorem $\ref{firstmaintheorem}$, in combination with a modification of the Chern-Weil formula, to produce the necessary estimates needed for the second fundamental forms associated to the Harder-Narasimhan filtration go to zero in $L^2$. This proves that in the limit we get a holomorphic splitting of $E_\infty$ into a direct sum of semi-stable quotients. Now, utilizing an idea which goes back to Donaldson in $\cite{Don1}$ (and is used by Daskalopoulos and Wentworth in $\cite{DW}$), we can show the holomorphic inclusion maps of the subsheaves from the filtration into $E$ converge to limiting holomorphic maps. Following a stability argument from $\cite{Kob}$ these limiting maps can be shown to be isomorphisms. Fortunately for us much of the hard analysis for this step was carried out by the author in $\cite{J2}$, and we refer the reader to this reference for all relevant details.
 
 The outline of the paper is as follows. In Section 2 we provide preliminary results on holomorphic vector bundles and torsion-free sheaves. We also introduce the Yang-Mills flow, providing important framework for later sections. We introduce the $P$-functional in Section 3, and prove it is bounded from below using the regularization of the Harder-Narasimhan filtration. In Section 4 we prove the existence of an $L^2$ approximate Hermitian structure on $E$, proving Theorem $\ref{firstmaintheorem}$ and Theorem $\ref{lastcor}$. In Section $5$ we construct an isomorphism between $Gr^{hns}(E)^{**}$ and $\hat E_\infty$, proving Theorem $\ref{main theorem}$.

\medskip
\begin{centering}
{\bf Acknowledgements}
\end{centering}
\medskip

First and foremost, the author would like to thank his thesis advisor, D.H. Phong, for all his guidance and support during the process of writing this paper. The author would also like to thank Thomas Nyberg and Tristan Collins for many enlightening discussions. The author thanks Valentino Tosatti for suggesting Theorem $\ref{lastcor}$. Furthermore, the author would like to thank the referee for many helpful comments and suggestions. Finally, the author would like to express his upmost gratitude to Richard Wentworth for encouraging him to work on this problem and providing valuable insight into the structure of the proof. This research was funded in part by the National Science Foundation, Grant No. DMS-07-57372, as well as Grant No. DMS-1204155. The results of this paper are part of the author's Ph.D. thesis at Columbia University.

  \section{Preliminaries}
  \subsection{Vector bundles and natural filtrations}
\label{firstsec}
In this section we introduce our notation and some basic facts about holomorphic vector bundles. Let $X$ be a compact K\"ahler manifold of complex dimension $n$. Locally the K\"ahler form is given by:  
\be
 \o=\frac{i}{2}\,g_{\bar kj}\,dz^j\wedge d\bar z^k,\nonumber
 \ee
where $g_{\bar kj}$ is a Hermitian metric on the holomorphic tangent bundle $T^{1,0}X$. Let $\Lambda$ denote the adjoint of wedging with $\o$. If $\eta$ is a $(1,1)$ form, then in coordinates $\Lambda\eta=-ig^{j\bar k}\eta_{\bar kj}$. The volume form on $X$ is given by $\o^n$, and throughout this paper we normalize $\o$ so that $\int_X\o^n=1$.

Let $E$ be a holomorphic vector bundle over $X$. Given a metric $H$ on $E$, there exists a natural connection called the Chern connection which preserves $H$ and defines the holomorphic structure on $E$. Given a section $\phi$ of $E$, this connection can be written down explicitly in a holomorphic frame:
\be
\nabla_{\bar k}\phi^\al=\pl_{\bar k}\phi^\al\qquad\qquad\nabla_j\phi^\al=\pl_j\phi^\al+H^{\al\bar\b}\pl_j H_{\bar \b\gamma}\phi^\gamma.\nonumber
\ee 
Furthermore, we continue to use $\nabla$ to denote the Chern connection on all associated bundles of $E$, as the bundles we are working with will generally be clear from context. The curvature of $\nabla$ on $E$ is the following endomorphism-valued 2-form:
\be
F:=F_{\bar kj}{}^{\al}{}_{\gamma}\,dz^j\wedge d\bar z^k,\nonumber
\ee
where $F_{\bar kj}{}^{\al}{}_{\gamma}=-\pl_{\bar k}(H^{\al\bar\b}\pl_j H_{\bar \b\gamma})$. Note that $i\Lambda F$ is a Hermitian endomorphism of the bundle $E$, which we denote by $\hat F$ for notational simplicity.  We can now compute the degree of $E$, which we define as follows:
\be
deg(E):=\int_X{\rm Tr}(\hat F)\,\o^n\nonumber.
\ee
We note this definition differs from the usual definition of degree by a factor of $2\pi$, which we omit to make certain formulas simpler later on. Because $X$ is K\"ahler this definition is independent of the choice of metric $H$ on $E$. 
The slope of the vector bundle $E$ is defined to be the following quotient:
\be
\mu(E)=\frac{deg(E)}{rk(E)}.\nonumber
\ee

Given a torsion-free subsheaf ${\cal F}\subset E$, we can view ${\cal F}$ as a holomorphic subbundle off the singular set $Z({\cal F})$ where ${\cal F}$ fails to be locally free. We know from $\cite{Kob}$ that $Z({\cal F})$ is a holomorphic subvariety of $X$ of codimension at least two. Then on $X\backslash Z({\cal F})$ we have a metric on the bundle ${\cal F}$ induced from the metric $H$ on $E$, and the curvature of this metric is at least in $L^1$ $\cite{J1}$. Thus the degree and slope of the subsheaf ${\cal F}$ can be defined in the same way as $E$, by just computing away from the singular set $Z({\cal F})$.

We say $E$ is stable if $\mu({\cal F})<\mu(E)$ for all proper subsheaves ${\cal F}\subset E$ with torsion free quotient. $E$ is semi-stable if $\mu({\cal F})\leq\mu(E)$ for all such ${\cal F}$. Since throughout this paper we have no stability assumptions on $E$, we will need the following proposition, a proof of which can be found in $\cite{Kob}$. 
\begin{proposition}
\label{HNFP}
Any torsion-free sheaf $E$ carries a unique filtration of subsheaves:
\be
\label{HNF}
0=S^0\subset S^1\subset S^2\subset\cdots\subset S^p=E,
\ee
called Harder-Narasimhan filtration of $E$, such that the quotients of this filtration $Q^i=S^i/S^{i-1}$ are torsion-free and semi-stable. The quotients are slope decreasing $\mu(Q^i)>\mu(Q^{i+1})$, and the associated graded object $Gr^{hn}(E):=\bigoplus_iQ^i$ is uniquely determined by the isomorphism class of $E$. 
\end{proposition}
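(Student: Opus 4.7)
The plan is to construct the filtration inductively by finding at each stage a maximal destabilizing subsheaf, and then to argue uniqueness of the graded object by intersecting two putative filtrations. The main analytic input, which will be the chief technical obstacle, is the boundedness of the set of slopes of torsion-free subsheaves of $E$.

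First I would establish the key boundedness statement: the quantity
\be
\mu_{\max}(E):=\sup\{\mu({\cal F})\,|\,{\cal F}\subset E\text{ is a torsion-free subsheaf},\,0<rk({\cal F})\le rk(E)\}\nonumber
\ee
is finite, and is in fact attained. In the projective setting this follows from bounded families, but on a general compact K\"ahler manifold it requires an analytic argument: equip $E$ with a Hermitian metric $H$, observe that any coherent subsheaf ${\cal F}$ is determined off its singular locus by an $L^2_1$ orthogonal projection $\pi$ onto ${\cal F}$ with $\bar\pl\pi\in L^2$, and use the Chern-Weil type formula of Uhlenbeck-Yau to write $deg({\cal F})$ as an integral of $\Lambda F_H$ restricted via $\pi$ minus a positive norm of $\bar\pl\pi$. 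Bounding the former by the sup norm of $\Lambda F_H$ and keeping the latter non-negative shows $deg({\cal F})\le C\cdot rk({\cal F})$, hence slopes are bounded for each rank, giving finiteness of $\mu_{\max}(E)$. The attainment of the supremum then follows from a weak compactness argument on the space of projections in $L^2_1$, which in turn produces a coherent limit subsheaf by the regularity theorem of Uhlenbeck-Yau.

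Next I would show the \emph{maximal destabilizing subsheaf} is unique. Among all subsheaves realizing $\mu_{\max}(E)$ choose one, call it $S^1$, of maximal rank. If $S^1$ were not semi-stable, a proper subsheaf ${\cal G}\subset S^1$ with $\mu({\cal G})>\mu(S^1)=\mu_{\max}(E)$ would violate the definition of $\mu_{\max}$; hence $S^1$ is semi-stable. If ${\cal F}$ is another subsheaf with $\mu({\cal F})=\mu_{\max}(E)$, then ${\cal F}+S^1$ is again torsion-free and a standard slope computation using the exact sequence
\be
0\to {\cal F}\cap S^1\to {\cal F}\oplus S^1\to {\cal F}+S^1\to 0\nonumber
\ee
together with $\mu({\cal F}\cap S^1)\le\mu_{\max}(E)$ forces equality everywhere, and maximality of rank of $S^1$ gives ${\cal F}\subset S^1$. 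I would also arrange that $S^1$ is saturated in $E$, so that $E/S^1$ is torsion-free, by replacing $S^1$ with its saturation, which does not decrease slope.

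Existence of the full filtration then follows by induction: apply the construction to $E/S^1$ to obtain a maximal destabilizing subsheaf, pull back to define $S^2\supset S^1$, and iterate. The process terminates because $rk(S^i)$ strictly increases. The slope-decreasing property $\mu(Q^i)>\mu(Q^{i+1})$ is forced by the inductive maximality: if $\mu(Q^{i+1})\ge\mu(Q^i)$, then the preimage in $E$ of the maximal destabilizer of $E/S^{i-1}$ would contain $S^i$ strictly and have slope $\ge\mu(Q^i)=\mu_{\max}(E/S^{i-1})$, contradicting the rank-maximal choice of $S^i$.

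Finally, for uniqueness of $Gr^{hn}(E)$, given a second such filtration $\{T^j\}$, I would compare the two by intersection. Let $i_0$ be minimal with $T^1\subset S^{i_0}$; the composition $T^1\hookrightarrow S^{i_0}\twoheadrightarrow Q^{i_0}$ is nonzero, and semi-stability of $T^1$ and $Q^{i_0}$ together with the slope-decreasing property gives $\mu(T^1)\le\mu(Q^{i_0})\le\mu(Q^1)$. Symmetry yields equality and $i_0=1$, and then a dimension count forces $T^1=S^1$ or at worst makes the induced map $T^1\to Q^1$ an isomorphism at the level of graded pieces. Descending to $E/S^1=E/T^1$ and inducting on rank gives an isomorphism $\bigoplus Q^i\cong\bigoplus R^j$ of associated graded sheaves. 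The heart of the proof is the initial boundedness and attainment step; the algebraic bookkeeping afterwards, for which I would follow Kobayashi's exposition, is formal.
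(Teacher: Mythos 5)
The paper does not actually prove this proposition: it is stated as a known result with the proof deferred entirely to Kobayashi's book \cite{Kob}. Your proposal is a correct outline of exactly that standard argument --- boundedness of $\mu_{\max}(E)$ via the Chern--Weil formula for $L^2_1$ projections, attainment by weak compactness and the Uhlenbeck--Yau regularity of weakly holomorphic subbundles, uniqueness of the rank-maximal destabilizing subsheaf via the intersection/sum slope computation, induction on the torsion-free quotients, and the semistability slope comparison for uniqueness of the graded object --- so it matches the approach the paper points to, and there is nothing further in the paper to compare it against.
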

Let $f^i$ denote the holomorphic inclusion of the sheaf $S^i$ into $E$. Also, let $\pi^i$ denote the orthogonal projection of $E$ onto $S^i$ with respect to $H$. We note this projection is defined where $S^i$ is locally free.

We also need an analogous filtration for semi-stable sheaves.  For a torsion-free sheaf ${\cal Q}$ which is semi-stable but not stable, we can always assume there is at least one proper subsheaf $\cal F$ of ${\cal Q}$ such that $\mu({\cal F})=\mu({\cal Q})$. In general there may be many such subsheaves.

\begin{definition}
{\em Given a semi-stable sheaf ${\cal Q}$, a} Seshadri filtration {\rm is a filtration of torsion free subsheaves }
\be
\label{SF1}
0\subset \ti S^0\subset \ti S^1\subset\cdots\subset \ti S^q={\cal Q},
\ee
{\rm such that $\mu(\ti S^i)=\mu({\cal Q})$ for all $i$, and each quotient $\ti Q^i=\ti S^i/\ti S^{i-1}$ is torsion-free and stable. }
\end{definition}
While such a filtration may not be unique, we do have the following proposition, once again from $\cite{Kob}$. 
\begin{proposition}
Given a Seshadri filtration of a torsion free sheaf ${\cal Q}$, the direct sum of the stable quotients, denoted
$Gr^s({\cal Q}):=\bigoplus_{i} \ti Q^i$, is canonical and uniquely determined by the isomorphism class of ${\cal Q}$.
\end{proposition}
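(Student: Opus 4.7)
The plan is to establish this Jordan-H\"older type uniqueness by induction on ${\rm rk}({\cal Q})$. Given two Seshadri filtrations
\[
0\subset \ti S^0\subset\cdots\subset \ti S^p={\cal Q}, \qquad 0\subset \ti S'^{\,0}\subset\cdots\subset \ti S'^{\,q}={\cal Q},
\]
with stable torsion-free factors $\ti Q^i$ and $\ti Q'^{\,j}$, I would show that the unordered multisets of isomorphism classes $\{\ti Q^i\}$ and $\{\ti Q'^{\,j}\}$ coincide. This immediately yields the sheaf-level isomorphism $\bigoplus_i \ti Q^i\cong \bigoplus_j \ti Q'^{\,j}$ and hence the canonicity of $Gr^s({\cal Q})$.

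The key step is the identification of $\ti Q^1=\ti S^1$ with some factor $\ti Q'^{\,j}$ of the second filtration. Let $j$ be the smallest index with $\ti S^1\subset \ti S'^{\,j}$ (which exists since $\ti S^1\subset {\cal Q}=\ti S'^{\,q}$), and consider the induced map $\varphi:\ti S^1\hookrightarrow \ti S'^{\,j}\to \ti Q'^{\,j}$, nonzero by minimality of $j$. A nonzero proper kernel of $\varphi$ would have slope strictly less than $\mu=\mu({\cal Q})$ by stability of $\ti S^1$, forcing $\mu({\rm im}\,\varphi)>\mu$ and contradicting that ${\rm im}\,\varphi\subset \ti Q'^{\,j}$ has slope at most $\mu$; hence $\varphi$ is injective. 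Stability of $\ti Q'^{\,j}$ then forces ${\rm rk}({\rm im}\,\varphi)={\rm rk}(\ti Q'^{\,j})$, and exploiting the saturated structure of the Seshadri filtration rules out codimension-two torsion in the cokernel, upgrading this to a sheaf-level isomorphism $\ti Q^1\cong \ti Q'^{\,j}$.

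For the inductive step, ${\cal Q}/\ti S^1$ is torsion-free (iterated extensions of the torsion-free $\ti Q^2,\ldots,\ti Q^p$ remain torsion-free) and inherits a Seshadri filtration from the first with factors $\ti Q^2,\ldots,\ti Q^p$. Quotienting the second filtration by $\ti S^1$ yields successive quotients $\ti Q'^{\,i}/{\cal N}_i$, where ${\cal N}_i$ is the image of $\ti S^1\cap \ti S'^{\,i}$ in $\ti Q'^{\,i}$. A rank count via $\sum_i {\rm rk}({\cal N}_i)={\rm rk}(\ti S^1)$, combined with ${\cal N}_j=\ti Q'^{\,j}$ and ${\cal N}_i=0$ for $i>j$ (since $\ti S^1\subset \ti S'^{\,i-1}$ there), forces ${\cal N}_i=0$ for $i<j$ as well. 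Collapsing the vanishing step at $i=j$ produces a Seshadri filtration of ${\cal Q}/\ti S^1$ with stable factors $\{\ti Q'^{\,i}\}_{i\neq j}$, and induction on rank closes the argument. The main obstacle is the sheaf-level identification $\ti Q^1\cong \ti Q'^{\,j}$: nonzero morphisms between stable torsion-free sheaves of equal slope need not in general be isomorphisms of coherent sheaves, as the cokernel could carry torsion supported in codimension at least two, and one must use the saturated structure of the Seshadri filtrations inside ${\cal Q}$ to exclude this possibility.
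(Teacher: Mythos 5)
Your overall scheme --- the Jordan--H\"older induction on rank, the choice of minimal $j$ with $\ti S^1\subset \ti S'^{\,j}$, the stability argument forcing $\varphi$ to be injective of full rank, and the telescoping rank count in the inductive step --- is the standard argument and those parts are sound; note, however, that the paper itself offers no proof of this proposition and simply cites \cite{Kob}. The genuine problem is the step you yourself flag as the main obstacle: upgrading the injective, equal-rank, equal-slope map $\varphi:\ti Q^1\to \ti Q'^{\,j}$ to an isomorphism of sheaves. No appeal to the saturation of the filtrations can close this gap, because the statement is false at the level of sheaf isomorphism once $\dim X\geq 2$. Take $X=\P^2$, two distinct points $x\neq y$, and ${\cal Q}=I_x\oplus I_y$, which is semi-stable of slope $0$. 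The filtration $0\subset I_x\oplus 0\subset {\cal Q}$ has graded object $I_x\oplus I_y$, while the exact sequence
\be
0\longrightarrow I_x\cap I_y\longrightarrow I_x\oplus I_y\longrightarrow I_x+I_y\longrightarrow 0,\qquad s\mapsto (s,s),\qquad (a,b)\mapsto a-b,\nonumber
\ee
with $I_x\cap I_y=I_{\{x,y\}}$ and $I_x+I_y=\cO_X$, is also a Seshadri filtration in the sense of the definition above (both factors are rank-one torsion-free, hence stable, of degree $0$, and both quotients are torsion-free), with graded object $I_{\{x,y\}}\oplus \cO_X$. These two graded objects are not isomorphic, since $h^0(I_x\oplus I_y)=0$ while $h^0(I_{\{x,y\}}\oplus \cO_X)=1$.

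What your argument does prove, and what is actually true and used elsewhere in the paper, is the statement after passing to double duals. Since $\varphi$ is an injection of torsion-free sheaves of equal rank and equal degree, its cokernel is a torsion sheaf of degree zero, hence supported in codimension at least two, so $\varphi^{**}:(\ti Q^1)^{**}\to (\ti Q'^{\,j})^{**}$ is an isomorphism of reflexive sheaves. Running your induction with this weaker conclusion shows that $Gr^s({\cal Q})$ is canonical up to isomorphism outside a set of codimension two, equivalently that $Gr^s({\cal Q})^{**}$ is uniquely determined by the isomorphism class of ${\cal Q}$. This is precisely the form in which the object enters the main result, Theorem \ref{main theorem}, which identifies $\hat E_\infty$ with $Gr^{hns}(E)^{**}$ and not with $Gr^{hns}(E)$. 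So the proposition should either be restated for the double dual, or ``uniquely determined'' must be read as ``up to isomorphism in codimension one''; as literally stated, it does not admit a proof.
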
 

Given our initial holomorphic vector bundle $E$, let $Q^k$ denote the $k$-th quotient of the Harder-Narasimhan filtration. Then $Gr^s(Q^k)$ can be denoted by $\bigoplus_i\ti Q^{i}_k$. Putting these two propositions together, there exists a double filtration of $E$ such that the corresponding graded object:
\be
Gr^{hns}(E):=\bigoplus _k\bigoplus_i \ti Q^{i}_k\nonumber
\ee
is canonical and depends only on the isomorphism class of $E$. We now define the algebraic singular set of $E$ as
\be
Z_{alg}:=\{x\in X\,| Gr^{hns}(E)_x {\rm \,\,is\,\, not\, \,free}\}.\nonumber
\ee
Since the sheaf $Gr^{hns}(E)$ is torsion-free, we know $Z_{alg}$ is of complex codimension at least two. 

Finally, let $r$ be the rank of $E$. We construct an $r$-tuple of real numbers: 
\be
(\mu(Q^1),\cdots,\mu(Q^1),\mu(Q^2),\cdots,\mu(Q^2),\cdots,\mu(Q^p),\cdots,\mu(Q^p)),\nonumber
\ee
where the multiplicity of each number $\mu(Q^i)$ is given by $rk(Q^i)$. We call this $r$-tuple the Harder-Narasimhan type of $E$. Now, recall from $\eqref{endokey}$ the endomorphism $\Psi_H$, whose eigenvalues are defined to be the Harder-Narasimhan type of $E$. We note the dependence on the metric $H$ comes from metric dependence on the orthogonal projections $\pi^i:E\longrightarrow S^i$.  
\begin{definition}
\label{approxHS}
We say $E$ carries an $L^p$ approximate Hermitian structure if for all $\epsilon>0$, there exists a metric $H$ on $E$ such that:
\be
||\hat F-\Psi_H||_{L^p}<\epsilon.\nonumber
\ee
\end{definition}

\subsection{Decomposition onto subsheaves}
\label{bundledecomp}
In this subsection we address how the curvature behaves on subsheaves of $E$. Let $S\subset E$ be a proper, torsion-free subsheaf, which we include in the following short exact sequence:
\be
\label{sequence}
0\longrightarrow S\xrightarrow{{\phantom {X}}{f}{\phantom {X}}} E\xrightarrow{{\phantom {X}}p{\phantom {X}}} Q\longrightarrow 0,
\ee
where we assume that the quotient sheaf $Q$ is torsion free. Define the singular set of $Q$ by $Z(Q):=\{x\in X\,|\, Q_x$ is not free$\}$. Then on $X\backslash Z(Q)$, we can view $\eqref{sequence}$ as a short exact sequence of holomorphic vector bundles.  Here, a smooth metric $H$ on $E$ induces a metric ${J}$ on $S$ and a metric $K$ on $Q$. For sections $\psi,\phi$ of $S$, we define the metric $J$ as follows:
\be
\langle\phi,\psi\rangle_J=\langle f(\phi),f(\psi)\rangle_H.\nonumber
\ee
In order to define the smooth metric $K$ on $Q$, we note that the choice of $H$ on $E$ defines a splitting of $\eqref{sequence}$:
\be
\label{splitting}
0\longleftarrow{S}\xleftarrow{{\phantom {X}}{\pi}{\phantom {X}}} {E}\xleftarrow{{\phantom {X}}{p^\dagger}{\phantom {X}}} {Q}\longleftarrow0.
\ee
For sections $v,w$ of $Q$, we define the metric $K$ as follows:
\be
\langle v,w\rangle_K=\langle p^{\dagger}(v),p^{\dagger}(w)\rangle_H.\nonumber
\ee
\begin{definition}
{\rm On $X\backslash Z(Q)$ both $S$ and $Q$ are holomorphic vector bundles. We define an} induced metric {\rm on either $S$ or $Q$ to be one constructed as above.} 
\end{definition}
We emphasize that induced metrics are not defined on all of $X$, and they may may degenerate or blow up as we approach the singular set, causing curvature terms to blow up.

Once we have sequence $\eqref{splitting}$, the second fundamental form $\gamma\in\Gamma(X,\Lambda^{0,1}\otimes Hom(Q,S))$ is given by:
\be
\gamma:= \pi\circ\bar\pl\circ p^\dagger. \nonumber
\ee 
Although the second fundamental form is not defined on all of $X$,  in $\cite{J1}$ it is shown that $\gamma$ is at least in $L^2(X)$. We now derive a formula for the second fundamental form in terms of $\pi$. As $\eqref{splitting}$ shows, $\pi$ is  the orthogonal projection from $E$ onto $S$. First, note that in fact $\bar\pl\circ p^\dagger$ already lies in $S$, since for any $q\in\Gamma(X\backslash Z(Q),Q)$, $p$ is holomorphic and $p\circ p^\dagger=I$, thus $p\,(\bar\pl\circ p^\dagger(q))=0$. Now, because $p^\dagger\circ p=I-\pi$, we have $\gamma\circ p=\bar\pl (p^\dagger)\circ p=\bar\pl(p^\dagger\circ p)=\bar\pl(I-\pi)=-\bar\pl\pi$. Thus $||\gamma||^2_{L^2}=||\bar\pl\pi||^2_{L^2}$, and $\pi\in L^2_1$. Conversely, as proven by Uhlenbeck and Yau in $\cite{UY}$, any weakly holomorphic $L^2_1(X)$ projection defines a coherent subsheaf of $E$ (see Popovici \cite{Po} for a simplified proof of this result). Thus later on in the paper we will go back and forth between working with a subsheaf $S$ and the $L^2_1(X)$ projection $\pi$ that defines the subsheaf.

We now turn to the decomposition of connections and curvature onto subbundles and quotient bundles, which is described in detail in $\cite{GH}$. Because of their prominence throughout the paper, we review some of these decomposition formulas here. We continue to work on $X\backslash Z(Q)$. Let $\nabla^S$ and $\nabla^Q$ be the Chern connections on $S$ and $Q$ with respect to the metrics $J$ and $K$. In a local coordinate patch, any section $\Phi$ of $E$ decomposes  onto the bundles $S$ and $Q$, denoted $\Phi=\phi+q$. We now have the following decomposition of $\nabla$:
\be
\label{connectiondecomp}
\nabla (\Phi)=\left( 
\begin{array}{cc}
\nabla^S & \gamma\\
-\gamma^\dagger & \nabla^Q \end{array} 
\right)
\left(
\begin{array}{c}
\phi \\
q\end{array} 
\right).
\ee
Now, denote the curvature of the induced metric $J$ by $F^S$ and the curvature of the induced metric $K$ by $F^Q$. The full curvature tensor $F$ decomposes as follows:
\be
\label{decompcurv}
F(\Phi)=\left( 
\begin{array}{cc}
F^S-\gamma\wedge\gamma^\dagger & \nabla\gamma\\
-(\nabla\gamma)^\dagger & F^Q-\gamma^\dagger\wedge\gamma \end{array} 
\right)
\left(
\begin{array}{c}
\phi \\
q\end{array} 
\right).
\ee
\subsection{The Yang-Mills flow}

In this section we describe our approach to the Yang-Mills flow. We follow the viewpoint taken by Donaldson in $\cite{Don1}$, which relates the flow of a metric in a fixed holomorphic structure to the evolution of an integrable unitary connection. This relationship is clearly explained in $\cite{Don1}$, and we direct the reader there for details. Here we simply present the setup and include the important facts needed for the arguments to follow. 

Fix a metric $H_0$ on $E$. Let $d_A$ be a unitary connection with local connection matrix $A$. Since $X$ is complex, $d_A$ will decompose into $(1,0)$ and $(0,1)$ parts, which we denote by $\pl_A=\pl+A'$ and $\bar\pl_A=\bar\pl+A''$. We say $d_A$ is integrable if $\bar\pl_A^2=0$ (thus $d_A$ defines a holomorphic structure by the Newlander-Nirenberg integrability theorem), and we denote the space of integrable unitary connections by ${\cal A}^{1,1}$. The curvature 2-form of such a connection only has a $(1,1)$ component, and is denoted by $F_A$. The Yang-Mills functional $YM:{\cal A}^{1,1}\longrightarrow \R$ can now be expressed:
\be
YM(A):=||F_A||^2_{L^2}.\nonumber
\ee
Now, on a general compact manifold, the Yang-Mills flow is the gradient flow of this functional, given by:
\be
\dot A=-d_A^*\,F_A.\nonumber
\ee
However, because we are on a K\"ahler manifold, Bianchi's second identity ($d_AF_A=0$) and the K\"ahler identities allow us to express the Yang-Mills flow in a simpler form:
\be
\label{YMF2}
\dot A=i\bar\pl_A\Lambda F_A-i\pl_A\Lambda F_A.
\ee
From this formulation one can check that the Yang-Mills flow stays inside ${\cal A}^{1,1}$ if we start with an integrable connection. 

Donaldson defines a flow of metrics with respect to a fixed holomorphic structure $\bar\pl_{A_0}$ in order to further study the Yang-Mills flow. Given $H_0$, any other metric $H$ defines a positive definite Hermitian endomorphism $h$ by the formula $h=H_0^{-1} H$. The Donaldson heat flow is a flow of endomorphisms $h=h(t)$ given by:
\be
\label{DHF}
h^{-1}\dot h=-(\hat F-\mu(E) I),
\ee
where $F$ is the curvature of the Chern connection of the metric $H(t)=H_0 h(t)$ and the holomorphic structure $\bar\pl_{A_0}$. Here the Einstein constant is simply given by $\mu(E)$ since we normalized $X$ to have volume one. Setting the initial condition $h(0)=I$, a unique smooth solution of the flow exists for all $t\in[0,\infty)$, and on any stable bundle this solution will converge to a smooth Hermitian-Einstein metric $\cite{Don1, Don2, Simp, Siu}$.

In our case the bundle $E$ is not stable, so we do not expect the flow to converge. However, we can use a solution to \eqref{DHF} to construct a solution to the Yang-Mills flow. Let $d_{A_0}$ be an initial connection in ${\cal A}^{1,1}$. We consider the flow of holomorphic structures
$\bar\pl_t=\bar\pl+A_t'',\nonumber$
where $A_t''$ is defined by the action of $w=h^{1/2}$ on $A_0''$. Explicitly, this action is given by:
\be
\label{action}
A_t''=wA_0''w^{-1}-\bar\pl ww^{-1}.
\ee
Define a flow of connections $d_{A_t}$ by imposing the unitary condition with respect to $H_0$ on the above flow of holomorphic structures. This flow is gauge equivalent to a solution of the Yang-Mills flow. Conversely, any path in ${\cal A}^{1,1}$ along the Yang-Mills flow defines an orbit of the complexified gauge group, which gives rise to a solution of the Donaldson heat flow (see $\cite{Don1}$ for details). As a result we can go back and forth between the two flows. Note that given this setup, the curvature  $F$ along the Donaldson heat flow is related to the curvature $F_A$ along the Yang-Mills flow by the following relation:
\be
\label{GRL}
F_A=w \,F\,w^{-1}.
\ee

We conclude this section by stating the convergence result of Hong and Tian from $\cite{HT}$. Consider a sequence of connections $A_j$ evolving along the Yang-Mills flow. Then, on $X\backslash Z_{an}$, along a subsequence the connections $A_j$ converge in $C^\infty$, modulo unitary gauge transformations, to a Yang-Mills connection $A_\infty$. Thus, always working on $X\backslash Z_{an}$,  we have a sequence of holomorphic structures $(E,\bar\pl_j)$ which converge in $C^\infty$ to a holomorphic structure $(E,\bar\pl_\infty)$. By the work of Bando and Siu, the bundle $(E,\bar\pl_\infty)$ extends to all of $X$ as a reflexive sheaf $\hat E_\infty$. Once again the main goal of this paper is to identify $\hat E_\infty$ with $Gr^{hns}(E)^{**}$, proving this limit is canonical and independent of subsequence.

\section{The $P$-functional}
We now begin the proof of Theorem $\ref{firstmaintheorem}$, starting with the construction of an $L^2$ approximate Hermitian structure on $E$. First we introduce the $P$-functional and describe some basic properties. 

Fix an initial metric $H_0$ on $E$. Then for any other metric $H$ we can define the endomorphism $h=H^{-1}_0H$. Consider any path $h_t$, $t\in[0,1]$, of positive definite Hermitian endomorphisms such that $h_0=I$ and $h_1=h$. The $P$-functional is defined by:
\be
P(H_0,H)=\int_0^1\int_X{\rm Tr}((\hat F_t-\Psi_t)h_t^{-1}\dot h_t)\,\o^n\,dt,\nonumber
\ee
where $F_t$ is the curvature of the metric $H_t=H_0h_t$. The above integral is well defined, for even though the projections $\pi^i$ that make up $\Psi_t$ are only defined on $X\backslash Z_{alg}$, we know that they are at least in $L^2_1$ $\cite{J1}$. Recall from Section \ref{firstsec} that $f_i$  denotes the holomorphic inclusion of $S^i$ into $E$. Although we generally view the projection $\pi^i$ as an endomorphism of $E$, its image is isomorphic to $S^i$ via the holomorphic inclusion $f_i$, and on occasion we implicitly make use of this fact. We now check the $P$-functional is independent of path.

\begin{proposition}
\label{path}
The $P$-functional is path independent for any pair of metrics $H_0, H$ on $E$.
\end{proposition}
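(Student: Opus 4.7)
The plan is to establish path independence by showing that the natural $1$-form
\[\alpha_H(\delta H) = \int_X {\rm Tr}\bigl((\Lambda F_H - \Psi_H) H^{-1}\delta H\bigr)\,\omega^n\]
on ${\rm Herm}^+(E)$ (whose line integrals compute $P$) is closed; since ${\rm Herm}^+(E)$ is contractible, closedness will suffice. First I would decompose $\alpha = \alpha^D + \alpha^T$ by inserting and subtracting the slope $\mu(E) I$: here $\alpha^D$ corresponds to the integrand $(\Lambda F_H - \mu(E) I)H^{-1}\delta H$ of Donaldson's functional, whose closedness is the classical path independence result ($\cite{Don1},\cite{Simp},\cite{Siu}$), while $\alpha^T$ absorbs the residual $(\mu(E) I - \Psi_H)H^{-1}\delta H$. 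This reduces the problem to verifying that $\alpha^T$ is closed.

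For $\alpha^T$, I would first telescope the definition of $\Psi_H$ to write
\[\Psi_H = \mu(Q^p) I + \sum_{i=1}^{p-1}\bigl(\mu(Q^i) - \mu(Q^{i+1})\bigr)\pi^i_H,\]
so that $\alpha^T$ becomes a linear combination with topological coefficients of the scalar $1$-form $\delta H \mapsto \int_X {\rm Tr}(H^{-1}\delta H)\,\omega^n$ and the projection $1$-forms $\beta^i_H(\delta H) = \int_X {\rm Tr}(\pi^i_H H^{-1}\delta H)\,\omega^n$. The central computational observation is the pointwise identity
\[{\rm Tr}(\pi^i_H H^{-1}\delta H) = {\rm Tr}\bigl((H|_{S^i})^{-1}\delta(H|_{S^i})\bigr) = \delta\log\det(H|_{S^i}),\]
valid on $X \setminus Z_{alg}$, which one checks at a point by working in an $H$-orthonormal frame adapted to $S^i$ (in which $\pi^i_H$ is the standard block-diagonal projection, $H=I$, and the identity reduces to the trace of the upper-left block of $\delta H$). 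This identifies $\Phi_i(H) = \int_X \log\det(H|_{S^i})\,\omega^n$ as a primitive for $\beta^i$, and similarly $\Phi_0(H) = \int_X \log\det H\,\omega^n$ for the scalar piece. Each contributing $1$-form is therefore exact on ${\rm Herm}^+(E)$, so $\alpha^T$ is closed.

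The main (mild) technical hurdle is to make sense of $\Phi_i$ across the codimension-two singular locus $Z_{alg}$, where $S^i$ may fail to be locally free. Since each $\pi^i_H$ is bounded by $1$ and lies in $L^2_1$, and since $Z_{alg}$ has real codimension at least four, a standard cutoff-and-limit argument on $X \setminus Z_{alg}$ shows that $\log\det(H|_{S^i})$ extends to an element of $L^1(X)$ and that the pointwise identity above survives integration against $\omega^n$. Integrating along any path $h_t$ from $I$ to $h$ then collapses each contribution to a difference of endpoint values $\Phi_i(H) - \Phi_i(H_0)$, giving path independence of $\alpha^T$ and hence of $P$.
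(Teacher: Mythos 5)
Your proposal is correct and follows essentially the same route as the paper: the Donaldson term is handled by the classical path-independence result, and the projection terms are shown to be exact via the pointwise identity ${\rm Tr}(\pi^i_H H^{-1}\delta H)=\delta\log\det(H|_{S^i})$, which is precisely the content of the paper's Lemma~\ref{h^{-1}h} (your telescoping of $\Psi_H$ and the closed-one-form packaging are cosmetic variations). The only remark worth adding is that the finiteness issue at $Z_{alg}$ is even milder than you suggest, since $c\,H_0\le H\le C\,H_0$ forces $\log\det\bigl((H_0|_{S^i})^{-1}H|_{S^i}\bigr)$ to be uniformly bounded, so the difference of endpoint values is automatically finite.
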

\begin{proof}
We note that the first term
\be
\int_0^1\int_X{\rm Tr}(\hat F_th_t^{-1}\dot h_t)\,\o^n\,dt,\nonumber
\ee
appears in the Donaldson functional and is shown to be path independent in Chapter 1, Section 5, of $\cite{Siu}$. Therefore we turn our attention to the second term:
\be
\int_0^1\int_X{\rm Tr}(\Psi_th_t^{-1}\dot h_t)\,\o^n\,dt=\sum_i\mu(Q^i)\int_0^1\int_X{\rm Tr}((\pi^i_t-\pi^{i-1}_t)h_t^{-1}\dot h_t)\,\o^n\,dt.\nonumber
\ee
Note that ${\rm Tr}(\pi^i_th_t^{-1}\dot h_t)={\rm Tr}(\pi^i_th_t^{-1}\dot h_tf^i\pi^i_t)={\rm Tr}(\pi^i_th_t^{-1}\dot h_tf^i)$, where $\pi^i_th_t^{-1}\dot h_tf^i$ is now an endomorphism of the bundle $S^i$. We need the following lemma.
\begin{lemma}
\label{h^{-1}h}
Dropping the subscript $t$ for simplicity, we have:
\be
\pi^ih^{-1}\dot hf^i=(h^i){}^{-1}\dot h^i,\nonumber
\ee
where $J_0^i$, $J^i$ are the induced metrics on the subbundle $S^i$ defined by $H_0$ and $H$, and $h^i$ is the endomorphism of $S^i$ defined by $h^i=(J_0^i){}^{-1}J^i$.
\end{lemma}
\begin{proof}
First we note that $h^{-1}\dot h$ can be defined using the derivative of the metric $H$:
\be
\pl_t \langle\cdot,\cdot,\rangle_H= \pl_t\langle h(\cdot),\cdot,\rangle_{H_0}= \langle\dot h(\cdot),\cdot,\rangle_{H_0}=\langle h^{-1}\dot h(\cdot),\cdot,\rangle_H.\nonumber
\ee
Thus for any two sections $\psi,\phi$ of $S^i$, we define $(h^i){}^{-1}\dot h^i$ by:
\be
\pl_t \langle\psi,\phi\rangle_{J^i}=\langle (h^i){}^{-1}\dot h^i\psi,\phi\rangle_{J^i}.\nonumber
\ee
However by definition of the induced metric we have
\be
\pl_t \langle\psi,\phi\rangle_{J^i}=\pl_t \langle f^i\psi,f^i\phi\rangle_{H}=\langle h^{-1}\dot hf^i\psi,f^i\phi\rangle_{H}=\langle \pi^ih^{-1}\dot hf^i\psi,\phi\rangle_{J^i},\nonumber
\ee
concluding the lemma.
\end{proof}
Of course the lemma is only true where $S^i$ is locally free, thus we restrict ourself to $X\backslash Z_{alg}$. On this set we have ${\rm Tr}(\pi^i_th_t^{-1}\dot h_t)={\rm Tr}((h^i_t){}^{-1}\dot h^i_t)=\pl_t$log det$(h^i_t),$ so
\bea
\int_0^1\int_X{\rm Tr}(\pi^i_th_t^{-1}\dot h_t)\,\o^n\,dt&=&\int_0^1\int_{X\backslash Z_{alg}}{\rm Tr}(\pi^i_th_t^{-1}\dot h_t)\,\o^n\,dt\nonumber\\
&=&\int_0^1\pl_t\int_{X\backslash Z_{alg}}{\rm log\,det}(h^i_t)\,\o^n\,dt\nonumber\\
&=&\int_{X\backslash Z_{alg}}{\rm log\,det}(h^i_1)\,\o^n.\nonumber
\eea
Thus the integral is path independent.
\end{proof}

The goal of the next few subsections is to prove the following theorem:
\begin{theorem}
\label{lowerb}
For a fixed reference metric $H_0$, the functional $P(H_0,H)$ is bounded below for all other Hermitian metrics $H$. 
\end{theorem}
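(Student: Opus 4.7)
I would show that $P(H_0,H)$ decomposes, up to a non-negative remainder, into a sum of Donaldson functionals on the semi-stable quotients of the Harder-Narasimhan filtration, and then apply the lower bound for the Donaldson functional on semi-stable bundles established in $\cite{J1}$. The role of the $\Psi$-correction in the definition of $P$ should be precisely to replace the ``global slope'' $\mu I$ appearing in the Donaldson integrand with the local slopes $\mu(Q^i)$ on each filtration piece, so that the diagonal contribution of $\Lambda F - \Psi$ reads off as $\Lambda F^{Q^i} - \mu(Q^i) I$ on each $Q^i$, which is exactly the Donaldson integrand for the semi-stable quotient.

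Concretely, I would first work on $X\backslash Z_{alg}$, where each $S^i$ and each $Q^i$ is a genuine holomorphic vector bundle. Using the curvature decomposition $\eqref{decompcurv}$ iteratively along the flag $S^1\subset\cdots\subset S^p=E$, together with Lemma 1 (which identifies $\pi^i h^{-1}\dot h f^i$ with $(h^i)^{-1}\dot h^i$) and the path-independence Proposition $\ref{path}$ (which allows me to choose a convenient path that respects the flag), I expect to obtain an identity of the form
\be
P(H_0,H)=\sum_i M_{Q^i}(K_0^i,K^i)+R(H_0,H),\nonumber
\ee
where $M_{Q^i}$ is the Donaldson functional on $Q^i$ equipped with its induced metrics $K_0^i, K^i$, and $R(H_0,H)\ge 0$ is built from squared $L^2$-norms of the second fundamental forms $\gamma_i$, which are in $L^2$ by the discussion of Subsection $\ref{bundledecomp}$. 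The cross terms arising from the flag are absorbed into $R$ with the correct positive sign, because the Donaldson piece contains a $+\gamma_i^\dagger\wedge\gamma_i$ contribution to the curvature on each subbundle.

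The difficulty is that the quotients $Q^i$ are only torsion-free in general, so the right-hand side is not a priori well-defined. To handle this I would adapt the blow-up regularization from $\cite{J1}$: construct an iterated blow-up $\sigma:\tilde X\to X$ supported on $Z_{alg}$ so that the strict transforms of the $S^i$ (and hence the $Q^i$) become locally free on $\tilde X$. Since $Z_{alg}$ has complex codimension at least two and the integrands involved are at worst $L^1$, pullback under $\sigma$ preserves the integrals defining $P$ and each $M_{Q^i}$. As each $Q^i$ lifts to a genuine semi-stable holomorphic bundle on $\tilde X$, $\cite{J1}$ provides a uniform lower bound for $M_{Q^i}$, and combined with $R\ge 0$ this yields the desired lower bound for $P$. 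The main obstacle will be this regularization step: one must verify that the procedure of $\cite{J1}$, designed for the Seshadri filtration of a single semi-stable bundle, extends to the iterative setting of the Harder-Narasimhan filtration, and that the formal decomposition above remains valid on $\tilde X$ without picking up new contributions along the exceptional divisor. The analytic heart of the argument is controlling the induced metrics near the exceptional divisor and pushing the $L^2_1$-regularity of the projections $\pi^i$ and the $L^2$-regularity of the $\gamma_i$ through the pullback.
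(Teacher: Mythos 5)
Your plan is essentially the paper's proof: the decomposition $P(H_0,H)=\sum_i M_i(H_0,H,\o)+\|\gamma^i\|_{L^2}^2-\|\gamma^i_0\|_{L^2}^2$ is established after regularizing the Harder-Narasimhan filtration by iterated blowups, the terms are shown to be unchanged by the regularization, and the lower bound follows from semi-stability of the regularized quotients $\ti Q^i$ with respect to $\pi^*\o$ via the Donaldson-functional bound of $\cite{J1}$. The only imprecision is your claim that the remainder $R$ is non-negative: the actual remainder is $\sum_i\bigl(\|\gamma^i\|_{L^2}^2-\|\gamma^i_0\|_{L^2}^2\bigr)$, which is merely bounded below by the constant $-\sum_i\|\gamma^i_0\|_{L^2}^2$ depending only on $H_0$ — but this still suffices for the theorem.
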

This theorem is major step in the proof of Theorem $\ref{firstmaintheorem}$. As a first step towards its proof we must regularize the Harder-Narasimhan filtration.

\subsection{Regularization of the Harder-Narasimhan filtration}
\label{RHNF}
In $\cite{J1}$, the author employs a procedure to regularize a torsion free subsheaf of $E$. In this section we describe that result, and explain how it can be easily expanded to regularize any filtration of subsheaves of $E$. We note that the following procedure is consistent with a viewpoint found in Uhlenbeck and Yau $\cite{UY}$. In their paper they view a torsion free sheaf locally as a rational map from $X$ to the Grassmanian $Gr(s,r)$. By Hironaka's Theorem we know this map can be regularized after a finite number of blowups. We follow our procedure because it lets us keep track of how that map changes in local coordinates at each step, which is important in the analysis that follows.

First we recall the result from $\cite{J1}$. Consider the short exact sequence of sheaves $\eqref{sequence}$. Here $E$ is locally free and $Q$ torsion free. Suppose $S$ has rank $s$, $E$ has rank $r$, and $Q$ has rank $q$. After choosing coordinates, off $Z(Q)$ we view $f$ as an $r\times s$ matrix of holomorphic functions with full rank. These matrices transform by given transition functions on the coordinate overlaps. As one approaches $Z$ the rank of $f$ may drop, and it is exactly this behavior that needs to regularized.

Let $Z_k$ be the subset of $Z(Q)$ where $rk(f)\leq k$. For the smallest $k$ such that $Z_k$ is nonempty, at a point we can choose coordinates so that $f$ can be expressed as 
\be
f=\left(
\begin{array}{cc}I_k&0\\0&g
 \end{array} 
\right),\nonumber
\ee
where $g$ vanishes identically on $Z_k$. Blow up along $Z_k$ by the map $\pi:\ti X\longrightarrow X$. On a given coordinate patch of $\ti X$ let $w$ define the exceptional divisor. Then the pullback of $f$ can be decomposed as follows:
\be
\label{decomf}
\pi^*f=\left(
\begin{array}{cc}I_k&0\\0&\ti g
 \end{array} 
\right)\left(
\begin{array}{cc}I_k&0\\0&w^aI_{s-k}
 \end{array} 
\right),
\ee
where $a$ is the largest power of $w$ we can pull out of $\pi^*g$. Denote the matrix on the left of $\eqref{decomf}$ as $\ti f$ and the matrix on right as $t$. In $\cite{J1}$ it is shown that the map $\ti f$ defines a new torsion free subsheaf $\ti S$ of $\pi^*E$ by explicitly writing down transition functions. Furthermore it is shown that this procedure (applied to each $Z_k$) stops after a finite number of blowups to produce a map where the rank does not drop anywhere on $X$, and thus defines a holomorphic subbundle of $\pi^*E$.

We now turn our attention to the Harder-Narasimhan filtration of $E$ $\eqref{HNF}$. Recall that $f^i:S^i\longrightarrow E$ denotes the holomorphic inclusion of $S^i$ into $E$, and let $l^i:S^{i}\longrightarrow S^{i+1}$ be the holomorphic inclusion of each subsheaf $S^i$ into the corresponding sheaf of next lowest rank $S^{i+1}$. Then we have that $f^{p-1}=l^{p-1}$, $f^{p-2}=l^{p-1}\circ l^{p-2}$, and in general $f^i=l^{p-1}\circ\cdots\circ l^i$. To regularize this filtration, we begin by regularizing each subsheaf, starting with $S^1$ and then working with subsheaves of successively higher rank. We describe the process as follows.

Given $S^i$ from the filtration, for each $i$ we have a sequence of blowups $\pi_i:\ti X^i\longrightarrow \ti X^{i-1}$ and a corresponding holomorphic inclusion map $\ti f^i:\ti S^i\longrightarrow {\pi_i}^*E$ such that the rank of $\ti f^i$ does not drop. From $\eqref{decomf}$ we know that locally $\ti f^i$ is defined by ${\pi_i}^*f^i=\ti f^i\circ t$, where $t$ is some diagonal matrix of monomials of sections defining the exceptional divisor. Since $f^i=l^{p-1}\circ\cdots\circ l^i$, and $\ti f^i={\pi_i}^*f^i\circ t^{-1}$, we can define $\ti l^i:={\pi_i}^*l^i\circ t^{-1}$. Because after a finite number of steps the rank of $\ti f^i$ does not drop, we have now that the rank of  $\ti l^i$ does not drop, thus
\be
0\longrightarrow (\pi_{i+1})^*\ti S^{i}\xrightarrow{{\phantom {X}}{\ti l^i}{\phantom {X}}} {\ti S^{i+1}}\nonumber
\ee
defines a holomorphic inclusion of vector bundles, and the regularized quotient $\ti Q^{i+1}$ is a holomorphic vector bundle. Following this construction for all $i$ we have a finite sequence of blowups that regularizes each sheaf in the Harder-Narasimhan filtration of $E$, such that the quotients $\ti Q^i$ are all locally free. Of course, we never used semi-stability of the quotients, so our procedure applies to any filtration of sheaves on $X$. Summing up we have proved the following proposition:
\begin{proposition}
\label{regularization}
Given a holomorphic vector bundle $E$ over a compact, complex manifold $X$, let 
\be
0=S^0\subset S^1\subset S^2\subset\cdots\subset S^p=E\nonumber
\ee
be a filtration of $E$ by subsheaves. Away from the singular sets the inclusion maps $l^i_0:S^i\longrightarrow S^{i+1}$ can be defined locally by matrices of holomorphic functions with transition functions on the overlaps. There exists a finite number of blowups
\be
\ti X_N\xrightarrow{{\phantom {X}}{\pi_N}{\phantom {X}}}\ti X_{N-1}\xrightarrow{{\phantom {X}}{\pi_{N-1}}{\phantom {X}}}\cdots \xrightarrow{{\phantom {X}}\pi_2{\phantom {X}}}\ti X_1\xrightarrow{{\phantom {X}}\pi_1{\phantom {X}}}X,\nonumber
\ee
and matrices of holomorphic functions $l^i_k$ over $\ti X_k$ with the following properties:

\medskip
i) On each $\ti X_k$ around a given point there exists coordinates so that if $w$ defines the exceptional divisor, there exists a diagonal matrix of monomials in $w$ (denoted $t$) so that
\be
\pi^*_{k-1} l^i_{k-1}= l^i_k\circ t.\nonumber
\ee

\medskip
ii) The rank of $l^i_N$ is constant for each $i$, thus it defines a holomorphic subbundle of $\ti S^{i+1}$ with a holomorphic quotient bundle. 
\end{proposition}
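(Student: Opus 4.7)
The plan is to prove the proposition by induction on the index $i$, applying to each inclusion $l^i$ in turn the single-subsheaf regularization procedure developed earlier in this section. Throughout, the tower of blowups is built up one stage at a time, and at each stage we invoke the key local decomposition \eqref{decomf} to extract the diagonal monomial matrix $t$.

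For the base case $i=1$, I would apply the iterative blowup procedure to $l^1_0: S^1 \hookrightarrow S^2$, viewed as a holomorphic matrix of full generic rank. At each step, one blows up the locus $Z_k$ where the rank drops to at most $k$; the decomposition \eqref{decomf} then gives, in suitable local coordinates, $\pi^* l^1 = \tilde l^1 \circ t$ with $t$ diagonal and monomial in the exceptional divisor equation $w$. Composing these decompositions through the successive blowups in the procedure yields the desired relation of property (i) at level $i=1$, and after finitely many steps the rank of $\tilde l^1$ stabilizes.

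For the inductive step, assume the procedure has been completed for $l^1, \ldots, l^{i-1}$, producing a partial tower $\tilde X_k \to X$ on which each previously treated map has constant rank. I would then run the same iterative procedure for $\pi^*_k l^i$ on $\tilde X_k$. The crucial point is that the new blowup centers are contained in the singular locus of $Q^{i+1} = S^{i+1}/S^i$, which has codimension at least two and is disjoint from the open set where $l^i$ is already a bundle map. Consequently, the further pullback preserves the constant rank of $l^j$ for $j<i$, and the monomial factor $t$ extracted from $l^i$ is independent of the factors extracted for earlier indices. Iterating this over all $i$ assembles the full tower $\tilde X_N \to \cdots \to X$ of the proposition; property (i) then holds for every $l^i$ by composing the local decompositions coming from each blowup, while property (ii) is immediate once the rank-stabilization at every level has been attained.

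The main obstacle will be establishing that the procedure genuinely terminates at each inductive stage. As indicated in the single-sheaf discussion, this rests on the observation that each blowup either strictly increases the minimum rank of $\tilde l^i$ on the exceptional divisor or strictly decreases the $w$-vanishing order of the non-identity block in \eqref{decomf}. Both are bounded non-negative integers, so the process terminates after finitely many steps for each $i$, and hence globally after $N$ blowups. The remaining bookkeeping concerns compatibility of the monomial decompositions across coordinate charts on each $\tilde X_k$, which is already handled by the twisted transition functions $\tilde \Phi_{\alpha\beta}$ displayed before the proposition.
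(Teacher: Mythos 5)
There is a genuine gap at the very first step of your induction. The single-subsheaf blow-up procedure you invoke — writing the inclusion as a matrix of holomorphic functions, stratifying by the degeneracy loci $Z_k$, and performing the row/column reduction that produces the decomposition $\eqref{decomf}$ — is set up for a subsheaf of a \emph{locally free} sheaf: it needs a holomorphic frame for the target on each coordinate patch. In your base case you apply it to $l^1_0\colon S^1\hookrightarrow S^2$, but $S^2$ is only a torsion-free (saturated, hence reflexive) subsheaf of $E$ and in general fails to be locally free along a nonempty analytic set of codimension $\geq 2$. Near those points there is no local frame of $S^2$, so $l^1$ cannot be represented as a matrix of holomorphic functions, the loci $Z_k$ are not defined, and the factorization $\pi^*l^1=\tilde l^1\circ t$ of $\eqref{decomf}$ cannot be extracted. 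The Hartogs-type extension that works in the paper's single-sheaf discussion relies precisely on the target being the bundle $E$; it does not carry over when the target is itself singular. The same problem recurs at every inductive stage, since your bottom-up ordering never regularizes $S^{i+1}$ before you need it as a target.

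The paper avoids this by never taking a singular sheaf as the target: it regularizes the inclusions $f^i\colon S^i\to E$ into the genuinely locally free $\pi^*E$, obtaining $\pi^*f^i=\tilde f^i\circ t$ with $\tilde f^i$ of constant rank, and only then \emph{defines} $\tilde l^i:=\pi^*l^i\circ t^{-1}$. Constancy of ${\rm rk}(\tilde l^i)$ is then deduced from the factorization $\tilde f^i=\pi^*f^{i+1}\circ\tilde l^i$ together with the constancy of ${\rm rk}(\tilde f^i)$, rather than proved by running the blow-up algorithm on $l^i$ directly. Your argument can be repaired either by adopting this route through $E$, or by reversing the order of your induction (regularize $S^{p-1}\subset E$ first, so that at each later stage the target has already been replaced by a holomorphic bundle and an appropriately twisted $\tilde l^i$ mapping into it). The termination argument and the observation that constant-rank maps remain of constant rank under further pullbacks are fine as you state them.
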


\subsection{Transformation of key terms}

We now turn our attention back to Theorem $\ref{lowerb}$. We prove this theorem by changing the form of the $P$-functional and writing it as a sum of objects which we know are bounded from below. First we recall the definition of the Donaldson functional on a vector bundle $E$:
\be
M(H_0,H,\o)=\int_0^1\int_X{\rm Tr}(F_th^{-1}_t\pl_t h_t)\wedge\o^{n-1}\,dt-{\mu(E)}\int_X{\rm log\,det}(h_1)\,\o^n,\nonumber
\ee
where once again $h_t$ is any path of positive definite Hermitian matrices with $h_0=I$ and $h_1=H_0^{-1}H$, and $F_t$ is the curvature of the metric $H_t:=H_0\,h_t$ along the path. Here we introduced $\o$ as input into the functional to show its dependence on a volume form. Now, in the analysis to follow it will be show that the Donaldson functional is well defined on holomorphic subsheaves and holomorphic quotient sheaves of $E$. As a result let $M_i(H_0,H,\o)$ denote the Donaldson functional on the quotient sheaf $Q^i$ with induced metrics from $E$ (this quantity is defined explicitly in Definition \ref{Donfunc}). We will prove that:
\be
\label{decomp}
P(H_0,H)=\sum_i\,M_i(H_0,H,\o)+||\gamma^i||_{L^2}^2-||\gamma^i_0||_{L^2}^2.
\ee
Here $\gamma^i$ is the second fundamental form of the short exact sequence:
\be
0\longrightarrow S^{i-1}\longrightarrow S^i\longrightarrow Q^i\longrightarrow 0,\nonumber
\ee
associated to the metric $H$, and $\gamma^i_0$ is the second fundamental form associated to $H_0$. Thus to prove Theorem $\ref{lowerb}$ we have to complete two steps. First we show that all the terms in $\eqref{decomp}$ are well defined for induced metrics on the sheaves $Q^i$, and second we need to show that the functional does indeed satisfy the decomposition $\eqref{decomp}$. In this subsection we will focus on showing all the terms are well defined.

As in the previous subsection much of the analysis we need has been carried out in $\cite{J1}$. We refer the reader to that reference for the details of the proofs, and here present the results, modified to our specific case. From Section $\ref{RHNF}$ we recall that there exists a regularized Harder-Narasimhan filtration
\be
0=\ti S^0\subset\ti S^1\subset\cdots\subset \ti S^{p-1}\subset \ti S^p=\pi^*E,\nonumber
\ee
such that the rank of the holomorphic inclusion maps $\ti f^i$ does not drop on $\ti X$ (here $\pi:\ti X\longrightarrow X$ is the sequence of blowups needed to construct the regularization). So given $\pi^*H$ on $\pi^*E$, the smooth induced metric on $\ti S^i$ is defined by:
\be
\ti J^i_{\bar \b\al}:= (\ti f^i)^\rho{}_\al\overline{(\ti f^i)^\gamma{}_\b}\,\pi^*H_{\bar\gamma\rho}.\nonumber
\ee
Also, because the rank of $\ti l^i:\ti S^i\longrightarrow\ti S^{i+1}$ does not drop, we have an exact sequence of holomorphic vector bundles:

\be
\label{seq1}
0\longrightarrow \ti S^i\xrightarrow{{\phantom {X}}{\ti l^i}{\phantom {X}}} \ti S^{i+1}\xrightarrow{{\phantom {X}}p^i{\phantom {X}}} \ti Q^{i+1}\longrightarrow 0.
\ee
The metric $\ti J^{i+1}$ gives a splitting of the short exact sequence:
\be
\label{seq2}
0\longleftarrow{\ti S^i}\xleftarrow{{\phantom {X}}{\lambda^{i}}{\phantom {X}}} {\ti S^{i+1}}\xleftarrow{{\phantom {X}}{{p^i}^\dagger}{\phantom {X}}} {\ti Q^{i+1}}\longleftarrow0,
\ee
and it follows that the metric $\ti K_{\bar \b\al}$ on $\ti Q^{i+1}$ defined by:
\be
\ti K_{\bar\b\al}:=({p^i}^\dagger)^\rho{}_\al\overline{({p^i}^\dagger)^\gamma{}_\b}\ti J^i_{\bar\gamma\rho}\nonumber
\ee
is smooth. 

Our main concern is that the integrals that make up each term in $\eqref{decomp}$ might not be finite, which is a reasonable concern because along $Z_{alg}$, curvature terms will blow up. We show these terms are controlled by using formulas describing the change during each step in the regularization procedure, and prove that in fact the desired terms do not change during regularization. Once we are working with the regularized filtration the induced metrics are smooth, and since the manifold is compact each term will be finite.

We recall the following proposition from $\cite{J1}$:
\begin{proposition}
Consider a single blowup from the regularization procedure $\pi:\ti X\longrightarrow X$. Let $J$ and $K$ be induced metrics on $S^i$ and $Q^i$, respectively. Then if $w$ locally defines the exceptional divisor $D$, there exist non-negative integers $a_\al$ so that:
\be
\pi^*{J}_{\bar\b\al}=w^{a_\al}\overline{w^{a_\b}}\ti{J}_{\bar\b\al}\qquad \qquad
\pi^*K_{\bar\b\al}=\frac{1}{w^{a_\al}\overline{w^{a_\b}}}\ti K_{\bar\b\al}.\nonumber
\ee 
\end{proposition}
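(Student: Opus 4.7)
The plan is to prove the two identities in turn, deducing the $K$ formula from the $J$ formula via duality. The $J$ identity is a direct consequence of the factorization $\pi^* f = \tilde f \circ t$ provided by Proposition~\ref{regularization}, where $t$ is the diagonal matrix with entries $w^{a_\alpha}$ along the diagonal. Substituting $(\pi^* f)^\rho{}_\alpha = \tilde f^\rho{}_\alpha w^{a_\alpha}$ (no sum on $\alpha$) into the definition $J_{\bar\beta\alpha} = H_{\bar\gamma\rho} f^\rho{}_\alpha \overline{f^\gamma{}_\beta}$ of the induced metric, one finds
\[
\pi^* J_{\bar\beta\alpha} = (\pi^* H)_{\bar\gamma\rho} \tilde f^\rho{}_\alpha \overline{\tilde f^\gamma{}_\beta} \cdot w^{a_\alpha} \overline{w^{a_\beta}} = w^{a_\alpha}\overline{w^{a_\beta}} \tilde J_{\bar\beta\alpha},
\]
recognizing $(\pi^* H)_{\bar\gamma\rho} \tilde f^\rho{}_\alpha \overline{\tilde f^\gamma{}_\beta}$ as $\tilde J_{\bar\beta\alpha}$.

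For the $K$ formula I would pass to the dual short exact sequence $0 \to Q^* \to E^* \to S^* \to 0$, whose first inclusion is the transpose $p^T: Q^* \hookrightarrow E^*$ of the quotient map. This is again an embedding of a torsion-free subsheaf into a locally free bundle with torsion-free cokernel, so the regularization of Proposition~\ref{regularization} applies to $p^T$ verbatim, and the same single blowup $\pi$ produces a factorization $\pi^* p^T = \tilde p^T \circ t'$, where $t'$ is a diagonal matrix of monomials $w^{a_\alpha}$, the indices now labeling a basis of $Q^*$. The induced metric $K$ on $Q$ has inverse $K^{-1}$ equal to the induced metric on $Q^* \subset E^*$ coming from $H^{-1}$; explicitly, $K^{\bar B A} = H^{\rho\bar\gamma} p^A{}_\rho \overline{p^B{}_\gamma}$. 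Running the same calculation as in the $J$ step on this dualized setup yields
\[
\pi^* (K^{-1})^{\bar\beta\alpha} = w^{a_\alpha}\overline{w^{a_\beta}} (\tilde K^{-1})^{\bar\beta\alpha}.
\]

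To finish, I invert both sides as Hermitian matrix-valued functions: since $\tilde K$ is smooth and positive definite, pointwise inversion is well defined on all of $\tilde X$, and the identity extends from the dense open set where $\pi$ is an isomorphism. Entrywise inversion of the diagonal-scaled relation converts the prefactor $w^{a_\alpha}\overline{w^{a_\beta}}$ into its reciprocal, giving the claimed $\pi^* K_{\bar\beta\alpha} = \frac{1}{w^{a_\alpha}\overline{w^{a_\beta}}} \tilde K_{\bar\beta\alpha}$. The main technical point, and the step that requires the most care, is verifying that the very same blowup $\pi$ that regularizes $f$ also performs the analogous regularization step for $p^T$ with compatible exponents. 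This is checked in local coordinates: in the adapted block normal form for $f$ with an $I_k$ block and a $g = \tilde g \cdot w^a I$ block, the relation $p \circ f = 0$ forces $p$ to vanish on the $I_k$ component and to annihilate the $g$ block, so that the common factor $w^a$ appearing in $g$ controls the diagonal factorization of $\pi^* p^T$ in a parallel manner, matching the dual indexing expected from the statement.
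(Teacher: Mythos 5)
The paper itself does not prove this proposition --- it is quoted from \cite{J1} --- so I am judging your argument on its own terms. Your proof of the $J$ identity is exactly the standard one: substitute the factorization $\pi^*f=\ti f\circ t$ into $J_{\bar\b\al}=f^\rho{}_\al\overline{f^\gamma{}_\b}H_{\bar\gamma\rho}$ and read off the diagonal monomial factors. That half is correct. Your reduction of the $K$ identity to a dual $J$-type identity is also sound as far as it goes: the identity $K^{-1}=pH^{-1}p^{*}$, i.e.\ that $K^{-1}$ is the metric induced on $Q^{*}\subset E^{*}$ by $H^{-1}$, is correct (one checks $K=((p^\dagger)^{*}Hp^\dagger)=(pH^{-1}p^{*})^{-1}$ directly from $p^\dagger=H^{-1}p^{*}(pH^{-1}p^{*})^{-1}$), and the final matrix inversion converting $w^{a_\al}\overline{w^{a_\b}}$ into its reciprocal is a correct formal step. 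Likewise, the image of any regularization of $p^T$ must be the saturation of $\pi^{*}(Q^{*})$ in $\pi^{*}E^{*}$, which is the annihilator of $\ti S$, i.e.\ $\ti Q^{*}$ with induced metric $\ti K^{-1}$; so the identification of the two sides is fine \emph{once} the factorization you invoke exists.

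The genuine gap is precisely that factorization. You assert that the same single blowup $\pi$ yields $\pi^{*}p^{T}=\ti p^{T}\circ t'$ with $t'$ a diagonal matrix of monomials, and your justification is that $p\circ f=0$ forces the factor $w^{a}$ in $g$ to ``control'' $\pi^{*}p^{T}$. This does not follow. Writing $p=(0\,|\,p_2)$ in the adapted coordinates, the relation $p\circ f=0$ pulls back to $\pi^{*}p_2\cdot\ti g\cdot w^{a}=0$, which only gives $\pi^{*}p_2\cdot\ti g=0$; it produces no monomial factor in the rows of $\pi^{*}p$, and the blowup centers were chosen where $\mathrm{rk}(f)$ drops, not where $\mathrm{rk}(p)$ drops, so constancy of $\mathrm{rk}(\ti p^{T})$ and diagonality of $t'$ are both unproven. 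What is actually needed (and is equivalent to the $K$ formula) is that the induced map $u:\ti Q\rightarrow\pi^{*}Q$ determined by $\pi^{*}p=u\circ\ti p$ can be put in diagonal monomial form in suitable frames, since then $\pi^{*}p^\dagger=\ti p^\dagger\circ u^{-1}$ gives $\pi^{*}K=u^{-*}\ti K u^{-1}$. Establishing this requires tracking how the transition functions of the quotient $\ti Q=\pi^{*}E/\ti S$ twist under the blowup, in parallel with the formula $\ti\Phi_{\al\b}{}^\rho{}_\gamma=\frac{w_\al{}^{a_\gamma}}{w_\b{}^{a_\rho}}\Phi_{\al\b}{}^\rho{}_\gamma$ that the paper records for $\ti S$; note also that the exponents appearing for $Q$ are indexed by a frame of $Q$, are in general different numbers from those for $S$ (only their sums must agree, so that $c_1(\ti S)+c_1(\ti Q)=c_1(\pi^{*}E)$), so the ``dual indexing'' cannot simply be inherited from the $f$ side. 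Without this step your argument proves the formula only up to an undetermined holomorphic change of frame on $\ti Q$, which is weaker than the stated diagonal monomial form.
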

Using this proposition one can compute how the induced curvature changes during each blowup, and we include the computation for the reader's convenience. Let $F$ be the curvature of the quotient sheaf $Q^i$. We work in a local trivialization and apply the previous proposition:
\bea
\pi^*F_{\bar kj}{}^\al{}_\b&=&-\pl_{\bar k}(\pi^*K^{\al\bar\gamma}\pl_j\pi^*K_{\bar \gamma\b})\nonumber\\
&=&-\pl_{\bar k}(\ti K^{\al\bar\gamma}w^{a_\al}\overline{ w^{a_\gamma}}\pl_j(\frac{1}{w^{a_\b}\overline{w^{a_\gamma}}}\ti K_{\bar\gamma\b})).\nonumber
\eea
Now since $\overline w^{a_\gamma}$ is anti-holomorphic, it follows that
\bea
\pi^*F_{\bar kj}{}^\al{}_\b&=&-\pl_{\bar k}(\ti K^{\al\bar\gamma}w^{a_\al}\pl_j(\frac{1}{w^{a_\b}}\ti K_{\bar\gamma\b}))\nonumber\\
&=&-\pl_{\bar k}(w^{a_\al}\pl_j(\frac{1}{w^{a_\b}})\ti K^{\al\bar\gamma}\ti K_{\bar \gamma\b}+\frac{w^{a_\al}}{w^{a_\b}}\ti K^{\al\bar\gamma}\pl_j \ti K_{\bar\gamma\b})\nonumber\\
&=&a_\al\pl_j\pl_{\bar k}{\rm log}|w|^2\delta^{\al}{}_\b-\pl_{\bar k}(\frac{w^{a_\al}}{w^{a_\b}}\ti K^{\al\bar\gamma}\pl_j \ti K_{\bar\gamma\b}).\nonumber
\eea
Note that the first term on the left vanishes away from $D$. This computation has two important corollaries, which we now state. For simplicity we restrict ourselves to working with $K$ on the quotient $Q^i$, noting that a similar formula holds for induced metrics on $S^i$. The first corollary follows from the Poincar\'e-Lelong formula.

\begin{corollary}
\label{cordegree}
Consider a single blowup from the regularization procedure $\pi:\ti X\longrightarrow X$, and let $D$ be the exceptional divisor. Then the following decomposition holds in the sense of currents
\be
\pi^*{\rm Tr} (F)=\frac{2\pi}i\left(\sum_\al a_\al\right) [D]+{\rm Tr(\ti F)}.\nonumber
\ee
\end{corollary}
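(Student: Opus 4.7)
The plan is to reduce the identity to a scalar statement about $\log\det$ of the induced metric, using the standard formula $\tr F = -\partial\bar\partial \log\det K$ for the trace of the Chern curvature of a Hermitian metric (with whatever $i/2$-normalization is used in the paper, applied uniformly on both $X$ and $\ti X$). Everything then follows from the transformation law for the induced metric already recorded in the preceding proposition.

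First I would take the determinant of both sides of
\[
\pi^* K_{\bar\beta\alpha} = \frac{1}{w^{a_\alpha}\,\overline{w^{a_\beta}}}\,\ti K_{\bar\beta\alpha}.
\]
Writing this in matrix form as $\pi^* K = (D^*)^{-1} \ti K\, D^{-1}$ with $D = \mathrm{diag}(w^{a_1},\ldots,w^{a_q})$ (scaling the $\alpha$-th row of $\ti K$ by $w^{-a_\alpha}$ and the $\beta$-th column by $\overline{w^{-a_\beta}}$), one immediately obtains $\det\pi^* K = |w|^{-2\sum_\alpha a_\alpha}\det \ti K$, hence
\[
\log\det\pi^* K \;=\; -\Bigl(\sum_\alpha a_\alpha\Bigr)\log|w|^2 \,+\, \log\det \ti K.
\]
Because $\log|w|^2$ is locally integrable, both sides lie in $L^1_{loc}(\ti X)$ and the identity is valid as an equality of currents on $\ti X$.

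Next I would apply $\partial\bar\partial$, using that pullback by the holomorphic map $\pi$ commutes with $\partial$ and $\bar\partial$, so $\pi^*\partial\bar\partial\log\det K = \partial\bar\partial\log\det\pi^*K$. Substituting the Chern-curvature formula $\tr F = -\partial\bar\partial \log\det K$ on $X$ together with the analogous identity for $\tr\ti F$ on $\ti X$ yields, as currents on $\ti X$,
\[
\pi^*\tr F \;=\; \Bigl(\sum_\alpha a_\alpha\Bigr)\partial\bar\partial \log|w|^2 \,+\, \tr\ti F,
\]
which is the claimed decomposition.

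The only subtlety, and the reason the statement is phrased in the language of currents, is that the classical pointwise identity only holds off the exceptional divisor $\{w=0\}$, since $\pi^* K$ degenerates (or blows up) there; the singular contribution across the divisor is captured exactly by the Poincar\'e--Lelong-type current $\partial\bar\partial \log|w|^2$. This is the main conceptual point, but once $\log|w|^2$ is interpreted as an $L^1_{loc}$ function and $\partial\bar\partial$ is taken distributionally, there is no real obstacle: the full proof is essentially the one-line calculation above.
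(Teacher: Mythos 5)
Your proposal is correct and is precisely the intended argument: the paper itself defers the proof to \cite{J1}, but the preceding proposition on the transformation of the induced metric is stated exactly so that the corollary follows by taking $\log\det$ of both sides and applying $-\pl\bar\pl$ distributionally, with the exceptional-divisor contribution captured by the Poincar\'e--Lelong current $\pl\bar\pl\log|w|^2$. The one point worth keeping in mind is that after a single intermediate blowup $\ti K$ need not yet be smooth (the rank of $\ti f$ can still drop), so $\log\det\ti K$ should likewise be handled as an $L^1_{loc}$ (difference of plurisubharmonic) potential rather than a smooth function; this does not affect the argument.
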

The following corollary is also proven in detail in $\cite{J1}$.
\begin{corollary}
The $L^2$ norm of the second fundamental form is well defined for any torsion free subsheaf $S^i$ of $S^{i+1}$ with torsion free quotient $Q^i$. Furthermore, for a given blowup from the regularization procedure $\pi:\ti X\longrightarrow X$, we have the following equality
\be
||\gamma^i|| ^2_{L^2(\o)}=||\ti\gamma^i||^2_{L^2(\pi^*\o)},\nonumber
\ee
where $\gamma^i$ is the second fundamental form associated to $S^i$, and $\ti\gamma^i$ to $\ti S^i$.
\end{corollary}
The proof of the preceding corollary uses Corollary \ref{cordegree} to show that the $L^2$ norm of $\gamma$ does not change during each step of the regularization. Because at the final step all the induced metrics are smooth, the integral is of a smooth function over a compact manifold, and thus is well defined. 

Next we show that the Donaldson functional $M_i(H_0,H,\o)$ is well defined on any quotient sheaf $Q^i$ arising from the filtration.  Given a blowup map $\pi:\ti X\longrightarrow X$, one can also define the Donaldson functional on a vector bundle over $\ti X$ by integrating with respect to the degenerate metric $\pi^*\o$. Since $\pi^*\o$ is closed the functional will still be independent of path. We define the Donaldson functional on the sheaves $Q^i$ as follows:

\begin{definition}
\label{Donfunc}
{\rm For any quotient sheaf $Q^i$ arising from the Harder-Narasimhan filtration of $E$, we define the} Donaldson functional {\rm on $Q^i$ to be:}
\be
M_i(H_0,H,\o):=M_{\ti Q}(\ti K_0,\ti K,\pi^*\o),\nonumber
\ee
{\rm for any regularization $\ti Q^i$.}
\end{definition}
Here $M_{\ti Q}(\ti K_0,\ti K,\pi^*\o)$ is the Donaldson functional for the vector bundles $\ti Q$ defined using the degenerate metric $\pi^*\o$. We note that the domains of the functionals $M_i$ are metrics on the vector bundle $E$, thus this definition only applies to induced metrics and does not extend to arbitrary metrics on $Q^i$. The following proposition proves that this definition is well defined.
\begin{proposition}
\label{donpres}
For each $i$ the functional $M_i$ is well defined for any pair of metrics on $E$, and is independent of the choice of regularization.
\end{proposition}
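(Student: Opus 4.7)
The plan is to establish both statements using the transformation formulas from Section \ref{RHNF}. For finiteness, on any regularized space $\tilde X$ the quotient $\tilde Q^i$ is a genuine holomorphic vector bundle (Proposition \ref{regularization}), the induced metrics $\tilde K_0$ and $\tilde K$ are smooth on $\tilde X$, and $\pi^*\o$ is a smooth, closed, bounded form on the compact manifold $\tilde X$. Thus $M_{\tilde Q}(\tilde K_0,\tilde K,\pi^*\o)$ is a well-defined real number, and path-independence of the defining integral is the standard argument for the Donaldson functional, which uses only closedness of the reference form (see \cite{Siu1}).

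For independence of the regularization, any two regularizations produced by Proposition \ref{regularization} can be dominated by a common further sequence of blowups, so by induction on the number of additional steps it suffices to compare $M_{\tilde Q}(\tilde K_0,\tilde K,\pi^*\o)$ with its analogue on a single extra blowup $\sigma:\hat X\to\tilde X$. Write $\hat\pi=\pi\circ\sigma$, let $w$ locally define the new exceptional divisor $E$, and set $\Lambda={\rm diag}(w^{a_\al})$. The metric transformation rule from Section \ref{RHNF},
\be
\sigma^*\tilde K_{\bar\b\al}\;=\;\hat K_{\bar\b\al}/(w^{a_\al}\overline{w^{a_\b}})\nonumber
\ee
(and the same rule for $\tilde K_0$), yields by direct matrix computation the conjugation identity $\sigma^*\tilde h_t=\Lambda\hat h_t\Lambda^{-1}$ for every $t$. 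In particular $\det(\sigma^*\tilde h_t)=\det(\hat h_t)$, and since $\sigma$ is birational and $\pi^*\o$ is a pullback,
\be
\int_{\tilde X}{\rm log\,det}(\tilde h_1)(\pi^*\o)^n\;=\;\int_{\hat X}{\rm log\,det}(\hat h_1)(\hat\pi^*\o)^n,\nonumber
\ee
so the topological term of the Donaldson functional is preserved. The same pullback combined with the corollary following Proposition \ref{regularization} gives $\mu(\tilde Q^i)=\mu(\hat Q^i)=\mu(Q^i)$, since the anomalous current $\sum a_\al\pl\bar\pl\log|w|^2$ wedged with $(\hat\pi^*\o)^{n-1}$ integrates to zero by the argument described next.

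For the curvature term one differentiates the conjugation identity to obtain $\sigma^*(\tilde h_t^{-1}\pl_t\tilde h_t)=\Lambda\hat h_t^{-1}\pl_t\hat h_t\Lambda^{-1}$, and the matrix-level analogue of the corollary following Proposition \ref{regularization}, proved by the same derivation, reads
\be
\sigma^*\tilde F_t\;=\;\Lambda\hat F_t\Lambda^{-1}+{\rm diag}\bigl(a_\al\pl\bar\pl\log|w|^2\bigr).\nonumber
\ee
Since $\Lambda$ is diagonal it commutes with the diagonal current correction, so after multiplying out and taking traces the conjugations cancel, and the difference between the two integrands takes the form $f_t\,\pl\bar\pl\log|w|^2$ for a smooth function $f_t$ on $\hat X$. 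By Poincar\'e--Lelong, $\pl\bar\pl\log|w|^2$ is a closed $(1,1)$-current supported on $E$, so pairing with the smooth form $(\hat\pi^*\o)^{n-1}$ reduces to evaluating $(\hat\pi^*\o)^{n-1}|_E$. The key vanishing is that $\sigma(E)$ has codimension at least two in $\tilde X$, so $\hat\pi^*\o|_E=(\sigma|_E)^*(\pi^*\o|_{\sigma(E)})$ has rank at most $n-2$ and hence $(\hat\pi^*\o)^{n-1}|_E=0$. Thus the anomalous term vanishes, the curvature integrals match, and we conclude $M_{\tilde Q}(\tilde K_0,\tilde K,\pi^*\o)=M_{\hat Q}(\hat K_0,\hat K,\hat\pi^*\o)$.

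The main obstacle is the current computation at the end: one must justify multiplying the diagonal current ${\rm diag}(a_\al\pl\bar\pl\log|w|^2)$, singular along $E$, by the smooth endomorphism $\hat h_t^{-1}\pl_t\hat h_t$ and pairing the result against the smooth form $(\hat\pi^*\o)^{n-1}$. This is legitimate because $\pl\bar\pl\log|w|^2$ is a closed positive $(1,1)$-current and the endomorphism and form factors are smooth on $\hat X$, so the product $\pl\bar\pl\log|w|^2\wedge(\hat\pi^*\o)^{n-1}$ is a well-defined top-dimensional current; the dimension-counting identity $(\hat\pi^*\o)^{n-1}|_E=0$ then provides the vanishing directly and closes the proof.
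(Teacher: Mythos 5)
Your argument is correct and follows the same strategy the paper sets up in Section 3.2 (and, for the proof itself, delegates to Proposition 5 of \cite{J1}): track the induced metrics through each blowup via the transformation rule, and kill the resulting $\partial\bar\partial\log|w|^2$ corrections by pairing them against $(\pi^*\omega)^{n-1}$, which vanishes on the exceptional divisors because every blowup center maps into a set of codimension at least two. Your reconstruction --- the common-refinement reduction, the conjugation identity $\sigma^*\tilde h_t=\Lambda\hat h_t\Lambda^{-1}$, and the cancellation of the conjugations under the trace --- fills in the details consistently with that approach.
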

The proof of Proposition \ref{donpres} again rests on our computation of $\pi^*F$. One can check that after each blow-up in the regularization procedure the value of the Donaldson functional remains the same. We direct the reader to $\cite{J1}$ for details. Immediately we see the $P$-functional is well defined on the subsheaves $S^i$ as well, and that its value is independent of regularization. Now all three terms on the right hand side of $\eqref{decomp}$ are well defined for induced metrics on the quotient sheaves $Q^i$. The next step is to show that the decomposition formula does indeed hold.

\subsection{Decomposition of the P-functional}

In this section we prove decomposition formula $\eqref{decomp}$, using an argument similar to Donaldson $\cite{Don1}$. We begin by considering the proper subsheaf of highest rank in the filtration, $S^{p-1}$. In the proof of Proposition $\ref{path}$, we found the following formula for $P$:
\be
P(H_0,H)=\int_0^1\int_X{\rm Tr}(\hat F_th_t^{-1}\dot h_t)\,\o^n\,dt-\sum_i\mu(Q^i)\int_X({\rm log\,det}(h^i_1)-{\rm log\,det}(h^{i-1}_1))\,\o^n,\nonumber
\ee
where $h^i$ is the endomorphism defined by induced metrics $J^i$ and $J^i_0$ on $S^i$. Dropping the subscript $t$, we note that by Proposition $\ref{donpres}$ we have:
\be
\int_0^1\int_X{\rm Tr}(\hat F h^{-1}\dot h)\o^n=\int_0^1\int_{\ti X}{\rm Tr}(\pi^*(\hat F h^{-1}\dot h))\pi^*\o^n,\nonumber
\ee
where $\pi:\ti X\longrightarrow X$, is a sequence of blowups which regularizes the Harder-Narasimhan filtration. Now, the regularized $\ti S^{p-1}$ and $\ti Q^{p}$ are holomorphic subbundles and quotient bundles of $\pi^*E$, and with the metric $\pi^*H$ we can identify the following splitting:
\be
\label{newsplitting}
0\longleftarrow{\ti S^{p-1}}\xleftarrow{{\phantom {X}}{}{\phantom {X}}} {\pi^*E}\xleftarrow{{\phantom {X}}{p^\dagger}{\phantom {X}}} {\ti Q^p}\longleftarrow0.\nonumber
\ee
Following Section $\ref{bundledecomp}$ we have the decomposition of curvature:
\be
\pi^*\hat F=
\left( 
\begin{array}{cc}
\hat F^{\ti S^{p-1}}+ \pi^*g^{j\bar k}\gamma_{\bar k}\gamma^\dagger_j & \pi^*g^{j\bar k}\nabla_j\gamma_{\bar k}\\
\pi^*g^{j\bar k}\nabla_{\bar k}\gamma^\dagger_j & \hat F^{\ti Q^p}-\pi^*g^{j\bar k}\gamma^\dagger_j\gamma_{\bar k} \end{array} 
\right)
.\nonumber
\ee
Define $V:=p^\dagger-p_0^\dagger$. Using the description of $h^{-1}\dot h$ from the proof of Lemma $\ref{h^{-1}h}$ we can see how $h^{-1}\dot h$ decomposes:
\be
 h^{-1}\dot h=
\left( 
\begin{array}{cc}
(h^{-1}\dot h)^{p-1}& -\dot V\\
-\dot V^\dagger & (h^{-1}\dot h)^p \end{array} 
\right).\nonumber
\ee
Here $(h^{-1}\dot h)^{p-1}$ and $(h^{-1}\dot h)^p$ are the induced endomorphisms on $\ti S^{p-1}$ and $\ti Q^p$. Thus we now have:
\bea
{\rm Tr}(\pi^*(\hat Fh^{-1}\dot h))&=&{\rm Tr}\,(\hat F^{\ti S^{p-1}}(h^{-1}\dot h)^{p-1}+ \pi^*\hat F^{\ti Q^p}(h^{-1}\dot h)^p)+\nonumber\\
&&\pi^*g^{j\bar k}\,{\rm Tr}( \gamma_{\bar k}\gamma^\dagger_j(h^{-1}\dot h)^{p-1}- \nabla_j\gamma_{\bar k}\dot V^\dagger-\nabla_{\bar k}\gamma^\dagger_j\dot V-\gamma^\dagger_j\gamma_{\bar k}(h^{-1}\dot h)^{p})\nonumber
\eea
We note that the term: 
\be
\int_0^1\int_{\ti X}{\rm Tr}(\pi^* \hat F^{\ti Q^p}(h^{-1}\dot h)^p)\pi^*\o^n,\nonumber
\ee
combines with:
\be
-\mu(Q^p)\int_{\ti X}({\rm log\,det}(h_1)-{\rm log\,det}(h_1^{p-1}))\pi^*\o^n,\nonumber
\ee
to give $M_p(H_0,H,\o)$. Also the term:
\be
\int_0^1\int_{\ti X}{\rm Tr}( \hat F^{\ti S^{p-1}}(h^{-1}\dot h)^{p-1})\pi^*\pi^n,\nonumber
\ee
combines with 
\be
-\sum_{i=1}^{p-1}\mu(Q^i)\int_0^1\int_X{\rm Tr}((\pi^i_t-\pi^{i-1}_t)h_t^{-1}\dot h_t)\,\pi^*\o^n\,dt,\nonumber
\ee
to give $P_{|_{\ti S^{p-1}}}(H_0,H)$. Thus the remaining term to identify is 
\be
\int_0^1\int_{\ti X}\pi^*g^{j\bar k}{\rm Tr}( \gamma_{\bar k}\gamma^\dagger_j(h^{-1}\dot h)^{p-1}- \nabla_j\gamma_{\bar k}\dot V^\dagger-\nabla_{\bar k}\gamma^\dagger_j\dot V-\gamma^\dagger_j\gamma_{\bar k}(h^{-1}\dot h)^{p})\pi^*\o^n\,dt.\nonumber
\ee
Now, since $\gamma_{\bar k}=\pl_{\bar k}p^\dagger$, we have $\dot \gamma_{\bar k}=\pl_t(\gamma_{\bar k}-(\gamma_{\bar k})_0)=\pl_t(\pl_{\bar k}(p^\dagger-p^\dagger_0))=\nabla_{\bar k}\dot V.$ Thus we can integrate by parts to get:
\be
\int_0^1\int_{\ti X}\pi^*g^{j\bar k}{\rm Tr}( \gamma_{\bar k}\gamma^\dagger_j(h^{-1}\dot h)^{p-1}+\gamma_{\bar k}\dot\gamma_j^\dagger+\gamma^\dagger_j\dot\gamma_{\bar k}-\gamma^\dagger_j\gamma_{\bar k}(h^{-1}\dot h)^{p})\pi^*\o^n\,dt.\nonumber
\ee
Consider the following formula, which can be found in $\cite{Don1}$:
\be
\pl_t(\gamma^\dagger_j)=\dot\gamma^\dagger_j+\gamma^\dagger_j(h^{-1}\dot h)^{p-1}-(h^{-1}\dot h)^p\gamma^\dagger_j.\nonumber
\ee
The final term now becomes:
\be
\int_0^1\int_{\ti X}\pl_t(\pi^*g^{j\bar k}{\rm Tr}(\gamma_{\bar k}\gamma^\dagger_j)\pi^*\o^n)\,dt=||\gamma^p||^2_{L^2}-||\gamma^p_0||^2_{L^2}.\nonumber
\ee
This completes the first step of the decomposition. We can continue the process on $P_{|_{\ti S^{p-1}}}(H_0,H)$ to prove the desired decomposition formula $\eqref{decomp}$. We are now ready to prove Theorem $\ref{lowerb}$.
\begin{proof}
 By $\eqref{decomp}$, we know the $P$-functional is the sum over all $i$ of three terms. The two second fundamental form terms are bounded below since $||\gamma^i||^2_{L^2}$ is positive and $-||\gamma_0^i||^2_{L^2}$ is fixed and only depends on our initial metric $H_0$. To see that $M_i(H_0,H,\o)$ is bounded below, notice that this functional is equivalent to the Donaldson functional defined on some regularization $\ti Q^i$. This regularization is a holomorphic vector bundle over $\ti X$, and is semi-stable with respect to the pulled back form $\pi^*\o$. Here we have expanded the definition of stability to include degenerate metrics (see Definitions $4$ and $5$ from \cite{J1} for details). This last term, the Donaldson functional defined on $\ti Q^i$, is explicitly shown to be bounded from below in the proof of Theorem 3 from $\cite{J1}$. 

\end{proof}

\section{An $L^2$ approximate Hermitian structure}

We are now ready to construct an $L^2$ approximate Hermitian structure on $E$ along the Yang-Mills flow, proving Theorem $\ref{firstmaintheorem}$. Recall that we defined the $P$-functional as the integral along a path, and proved this integral is path independent. Thus if $H_t$ is a family of metrics on $E$, the derivative in $t$ of the $P$ functional is readily seen as:  
\be
\pl_t P(H_0,H_t)=\int_X{\rm Tr}((\hat F-\Psi)H_t^{-1}\pl_t H_t)\o^n.\nonumber
\ee
We use the lower bound on the $P$-functional to show that $E$ admits an $L^2$ approximate Hermitian structure along the Donaldson heat flow. We then show the existence of such a structure along the Donaldson heat flow shows one exists along the Yang-Mills flow. 

First we need the following proposition, which gives one inequality in the proof of the Atiyah-Bott formula.
\begin{proposition}
{\label{AB1}}
For any connection $A\in{\cal A}^{1,1}$, we have:
\be
||\Psi||_{L^2}^2\leq ||\hat F_A||_{L^2}.\nonumber
\ee
\end{proposition}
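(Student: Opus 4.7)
The plan is to evaluate the inner product $\int_X \mathrm{Tr}(\Psi\,\Lambda F_A)\,\omega^n$ explicitly via the Chern--Weil formula for the subsheaves of the Harder--Narasimhan filtration, obtain a lower bound of the form $\|\Psi\|_{L^2}^2$ using the slope-decreasing property, and then conclude by Cauchy--Schwarz.

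The starting point is the Chern--Weil identity for the weakly holomorphic $L^2_1$ projection $\pi^i$ onto $S^i$. For each subsheaf of the filtration we have
\be
\int_X \mathrm{Tr}(\pi^i\,\Lambda F_A)\,\omega^n \;=\; \deg(S^i) + \|\gamma^i\|_{L^2}^2,\nonumber
\ee
where $\gamma^i$ denotes the second fundamental form of the inclusion $S^i\hookrightarrow E$ (so $\gamma^0=\gamma^p=0$). The identity can be read off from the curvature decomposition $\eqref{decompcurv}$ on the locus where $S^i$ is locally free; the key point is that this locus has complement of codimension at least two and that $\pi^i\in L^2_1$, so the argument can be carried out rigorously after pulling back to the regularization $\tilde X$ provided by Proposition $\ref{regularization}$, where $\tilde f^i$ is a genuine holomorphic subbundle inclusion and the individual pieces of the identity are invariant under the blowup procedure by the same arguments already used in the proof of Proposition $\ref{donpres}$.

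Inserting this identity into the definition of $\Psi$ and using $\deg(S^i)-\deg(S^{i-1})=\mu(Q^i)\,\mathrm{rk}(Q^i)$, one finds
\be
\int_X \mathrm{Tr}(\Psi\,\Lambda F_A)\,\omega^n \;=\; \sum_{i=1}^p \mu(Q^i)^2\,\mathrm{rk}(Q^i) \;+\; \sum_{i=1}^p \mu(Q^i)\bigl(\|\gamma^i\|_{L^2}^2-\|\gamma^{i-1}\|_{L^2}^2\bigr).\nonumber
\ee
Because $\gamma^0=\gamma^p=0$ and the Harder--Narasimhan slopes are strictly decreasing ($\mu(Q^i)>\mu(Q^{i+1})$), an Abel summation rewrites the second sum as $\sum_{i=1}^{p-1}(\mu(Q^i)-\mu(Q^{i+1}))\|\gamma^i\|_{L^2}^2\geq 0$. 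Since the volume of $\omega^n$ is one, the first sum is exactly $\|\Psi\|_{L^2}^2$, so
\be
\|\Psi\|_{L^2}^2 \;\le\; \int_X \mathrm{Tr}(\Psi\,\Lambda F_A)\,\omega^n.\nonumber
\ee

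Finally, Cauchy--Schwarz bounds the right-hand side above by $\|\Psi\|_{L^2}\cdot\|\Lambda F_A\|_{L^2}$, giving $\|\Psi\|_{L^2}\le\|\Lambda F_A\|_{L^2}$ and hence the stated inequality after squaring. The only delicate step is justifying the Chern--Weil identity on possibly singular subsheaves; everything else reduces to slope-ordering and a one-line Cauchy--Schwarz. Fortunately the regularization machinery of Section $\ref{RHNF}$ and the curvature invariance statements established in the proof of the decomposition $\eqref{decomp}$ already supply exactly what is needed to make the identity rigorous.
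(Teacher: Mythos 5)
Your proof is correct, but it takes a genuinely different route from the one in the paper. The paper runs the Yang--Mills flow from $A$, invokes the Hong--Tian compactness theorem to extract a limiting Yang--Mills connection $A_\infty$ with locally constant eigenvalues $\vec\lambda$, proves $\deg(S)\le\sum_{i\le s}\lambda_i$ for every subsheaf (via Chern--Weil plus a linear-algebra trace estimate), deduces the dominance ordering $\vec\mu\le\vec\lambda$, and finishes with the Atiyah--Bott convexity result ($\vec\mu\le\vec\lambda$ implies $\sum_i\mu_i^2\le\sum_i\lambda_i^2$) together with the monotonicity of $\|\Lambda F_{A_t}\|^2_{L^2}$ along the flow. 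Your argument is entirely static: the Chern--Weil identity $\int_X{\rm Tr}(\pi^i\,\Lambda F_A)\,\omega^n=\deg(S^i)+\|\gamma^i\|^2_{L^2}$, an Abel summation that converts the second-fundamental-form contributions into $\sum_{i=1}^{p-1}\bigl(\mu(Q^i)-\mu(Q^{i+1})\bigr)\|\gamma^i\|^2_{L^2}\ge 0$ using the strict slope decrease of the Harder--Narasimhan quotients, and a pointwise plus integrated Cauchy--Schwarz. Both proofs rest on the same delicate foundation, namely the validity of the Chern--Weil formula for the possibly singular subsheaves $S^i$, which the paper also uses without further comment both in its degree lemma and later in Section 4; but your route dispenses entirely with the flow, the Hong--Tian convergence theory, and Proposition 12.6 of Atiyah--Bott, so it is considerably more elementary and self-contained. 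One housekeeping remark: what you actually establish is $\|\Psi\|_{L^2}\le\|\Lambda F_A\|_{L^2}$, equivalently $\|\Psi\|^2_{L^2}\le\|\Lambda F_A\|^2_{L^2}$; the statement as printed, with the square on the left-hand side only, appears to be a typo, since the paper's own proof and all of its subsequent applications use the version with both sides squared --- which is the inequality you prove.
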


The proof of this result is similar to the proof of Corollary 2.22 in $\cite{DW}$. We include the details here for the reader's convenience. As a first step, we compute the square of $\Psi$:
\be
\label{square}
\Psi^2=\sum_i\mu (Q^i)^2(\pi^i-\pi^{i-1}).
\ee
To see this, note for any $k>0$, we have $\pi^{i-k}\pi^i=\pi^i\pi^{i-k}=\pi^{i-k}$ since the subbundles are ordered by inclusion. Thus $(\pi^i-\pi^{i-1})^2={\pi^i}^2-\pi^{i-1}\pi^i-\pi^i\pi^{i-1}+{\pi^{i-1}}^2=\pi^i-\pi^{i-1}$. Also, all the cross terms in $\Psi_H^2$ vanish, since 
\bea
(\pi^{i-k}-\pi^{i-k-1})(\pi^i-\pi^{i-1})&=&(\pi^{i-k}(\pi^i-\pi^{i-1})-\pi^{i-k-1}(\pi^i-\pi^{i-1}))\nonumber\\
&=&(\pi^{i-k}-\pi^{i-k}-\pi^{i-k-1}+\pi^{i-k-1})\nonumber\\
&=&0.\nonumber
\eea
This proves $\eqref{square}$. 

Now, let $r$ be the rank of $E$. Recall the Harder-Narasimhan type of $E$:
\be
\vec{\mu}:=(\mu_1,...,\mu_i,...,\mu_r)=(\mu(Q^1),...,\mu(Q^1),\mu(Q^2),...,\mu(Q^2),...,\mu(Q^p),...,\mu(Q^p)),\nonumber
\ee
where the multiplicity of each $\mu(Q^i)$ is given by $rk(Q^i)$. Because $X$ has volume one, we see by $\eqref{square}$ that $||\Psi||_{L^2}^2=\sum_{i=1}^{r}\mu_i^2,$ which is independent of the metric used to define the projections $\pi^i$. Following Atiyah and Bott, given any two $r$-tuples $\vec{\mu},\vec{\lambda}$ satisfying $\mu_{i}\geq\mu_{i+1}$, $\lambda_i\geq\lambda_{i+1}$, and $\sum_{i=1}^{r}\mu_i=\sum_{i=1}^{r}\lambda_i$, we have
\be
\vec{\mu}\leq\vec{\lambda}\,\,\,\,{\Leftrightarrow}\,\,\,\,\sum_{j\leq k}^{r}\mu_j\leq\sum_{j\leq k}^{r}\lambda_i\,\,\,\,{\rm for\,all}\,\, k=1,...,r.\nonumber
\ee

Next we consider the convergence results of Hong-Tian. They show that on $X\backslash Z_{an}$ and along a subsequence, $A_j\longrightarrow A_\infty$ in $C^\infty$ modulo unitary gauge transformations. Since $|\hat F_t|_{C^0}$ is uniformly bounded along the flow (see Corollary 17 in \cite{Don1}), the $L^2$ norm  of $\hat F_\infty$ is defined on all of $X$, and we see that $||\hat F_j-\hat F_\infty||_{L^2(X)}^2$ goes to zero as $j$ tends to infinity. Furthermore, since $A_\infty$ is Yang-Mills, we have that the eigenvalues of $\hat F_\infty$ are locally constant, given by $\lambda_1\geq...\geq\lambda_r$ (counted with multiplicities). Consider the following lemma:

\begin{lemma}
Let $S$ be any torsion free subsheaf of $E$ of rank $s$. Then $deg(S)\leq\sum_{i\leq s}\lambda_i$.
\end{lemma}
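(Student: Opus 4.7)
The plan is to combine the Chern-Weil integral representation of the degree of a subsheaf with the complex-gauge invariance of degree along the Yang-Mills flow, and then use the eigenvalue structure of $\Lambda F_\infty$ to extract the stated bound.

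First I would recall the standard identity
\[
\deg(S) \;=\; \int_X {\rm Tr}(\pi\, \Lambda F_A)\,\omega^n \;-\; \|D''_A \pi\|^2_{L^2},
\]
valid for any integrable unitary connection $A$ on $E$, where $\pi \in L^2_1$ is the $H_0$-orthogonal projection onto $S$ (this is just the decomposition of curvature in Section \ref{bundledecomp} integrated against $\omega^{n-1}$). Dropping the nonnegative second-fundamental-form term yields, for every such $A$, the inequality $\deg(S) \leq \int_X {\rm Tr}(\pi\, \Lambda F_A)\,\omega^n$.

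Next I would apply this estimate along the Yang-Mills flow. As described in Section 2.3, the flow $A_t$ is realized by the action of the complex gauge transformations $w_t = h_t^{1/2}$ generated by the Donaldson heat flow; this action carries $S$ to a torsion-free subsheaf $S_t \subset (E, \bar\partial_t)$ of the same rank $s$ and the same degree, since $w_t : (E,\bar\partial_0) \to (E,\bar\partial_t)$ is a holomorphic isomorphism. Letting $\pi_t$ denote the $H_0$-orthogonal projection onto $S_t$, the Chern-Weil inequality gives, for every $t \geq 0$,
\[
\deg(S) \;=\; \deg(S_t) \;\leq\; \int_X {\rm Tr}(\pi_t\, \Lambda F_{A_t})\,\omega^n.
\]
At almost every point $\pi_t$ is a self-adjoint rank-$s$ projection and $\Lambda F_{A_t}$ is a self-adjoint endomorphism with ordered eigenvalues $\mu^t_1(x) \geq \cdots \geq \mu^t_r(x)$, so the elementary variational inequality ${\rm Tr}(\pi_t \Lambda F_{A_t}) \leq \sum_{i \leq s}\mu^t_i(x)$ holds pointwise, yielding
\[
\deg(S) \;\leq\; \int_X \sum_{i \leq s}\mu^t_i(x)\,\omega^n \qquad\text{for all } t \geq 0.
\]

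Finally I would pass to the limit along the subsequence $t_j \to \infty$ provided by Hong-Tian, on which $A_{t_j} \to A_\infty$ in $C^\infty_{loc}$ on $X \setminus Z_{an}$ modulo unitary gauge. There the eigenvalues $\mu^{t_j}_i(x)$ converge pointwise to the locally constant $\lambda_i$. Using the uniform $L^\infty$-bound on $\|\Lambda F_{A_t}\|$ that holds along the Yang-Mills flow, together with the facts that $Z_{an}$ has measure zero and $\int_X \omega^n = 1$, dominated convergence gives
\[
\deg(S) \;\leq\; \lim_{j\to\infty}\int_X \sum_{i\leq s}\mu^{t_j}_i\,\omega^n \;=\; \sum_{i \leq s}\lambda_i.
\]
The main technical concern is that the subsheaves $S_t$ and their projections $\pi_t$ may degenerate as $t$ grows and need not admit any meaningful limit as $t \to \infty$; however, this is bypassed because the argument only uses the instantaneous Chern-Weil inequality at each $t$ combined with the pointwise eigenvalue comparison, so no continuity of $\pi_t$ in $t$ is required, and the uniform curvature bound along the flow makes the limit pass cleanly.
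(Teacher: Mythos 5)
Your proposal is correct and follows essentially the same route as the paper: the Chern--Weil identity $\deg(S)=\int_X{\rm Tr}(\Lambda F_j\circ\pi_j)\,\omega^n-\|\bar\partial_j\pi_j\|^2_{L^2}$ for the projection onto $w_j(S)$, discarding the nonnegative second-fundamental-form term, the Ky Fan trace inequality ${\rm Tr}(L\pi)\leq\sum_{i\leq s}\mu_i$, and the Hong--Tian convergence. The only cosmetic difference is that you apply the eigenvalue inequality pointwise to $\Lambda F_{A_{t_j}}$ and conclude by dominated convergence, whereas the paper applies it to the fixed limit operator $\Lambda F_\infty$ and absorbs the error $\int_X{\rm Tr}((\Lambda F_j-\Lambda F_\infty)\circ\pi_j)\,\omega^n$ using the $L^2$ convergence $\Lambda F_j\to\Lambda F_\infty$.
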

\begin{proof}
Recall that along the Yang-Mills flow, the holomorphic structure of $E$ evolves by the action of $w_j$, where $w_j=h^{\frac{1}{2}}_j$ and $h_j=H_0^{-1}H_j$. Let $\pi_j$ be the orthogonal projection onto the subsheaf $w_j(S)$. We then have:
\bea
deg(S)&=&\int_X{\rm Tr}(\hat F_j\circ\pi_j)\o^n-||\bar\pl_j\pi_j||^2_{L^2}\nonumber\\
&\leq& \int_X{\rm Tr}(\hat F_\infty\circ\pi_j)\o^n+\int_X{\rm Tr}((\hat F_j-\hat F_\infty)\circ\pi_j)\o^n\nonumber\\
&\leq&\int_X{\rm Tr}(\hat F_\infty\circ\pi_j)\o^n+||\hat F_j-\hat F_\infty||_{L^2}.\nonumber
\eea
We need the following claim from linear algebra, which can be found in \cite{DW}:
\begin{claim}
Let $V$ be a finite dimensional Hermitian vector space of complex dimension $r$. Let $L\in End(V)$ be a Hermitian operator with eigenvalues $\lambda_1\geq...\geq\lambda_r$ (counted with multiplicities). Let $\pi$ denote orthogonal projection onto a subspace of dimension $s$. Then ${\rm Tr}(L\pi)\leq\sum_{i\leq s}\lambda_i$.
\end{claim}
Using this claim, we se that
\be
deg(S)\leq\sum_{i\leq s}\lambda_i+||\hat F_j-\hat F_\infty||_{L^2},\nonumber
\ee
and the result follows by sending $j$ to infinity. 
\end{proof}
Now, let $A_j$ be a sequence of connections along the Yang-Mills flow with Hong-Tian limit $A_\infty$. Let $\vec{\mu}$ be the Harder-Narasimhan type of $E$, and $\vec{\lambda}$ the eigenvalues of $\Lambda F_\infty$. Then we have that $\vec{\mu}\leq\vec{\lambda}$. To see this, recall $S^i$ are the subsheaves defining the Harder-Narasimhan filtration, and let them have corresponding rank $s^i$. By the previous lemma we have $deg(S^i)\leq\sum_{j\leq s^i}\lambda_j$ for all $i$. We also have that $deg(S^i)=\sum_{j\leq s^i}\mu_j$. Thus for each $s^i$ we have
\be
\sum_{j\leq s^i}\mu_j\leq\sum_{j\leq s^i}\lambda_j.\nonumber
\ee
Now, $\vec{\mu}\leq\vec{\lambda}$ follows from Lemma 2.3 in $\cite{DW}$. We are now ready to prove Proposition $\ref{AB1}$.
\begin{proof}
For any initial connection $A_0$ on $E$, let $A_t$ be the solution of the Yang-Mills flow. Since the Hermitian-Yang-Mills energy is decreasing along the flow, we know that for all $t$:
\be
||\hat F_{A_0}||^2_{L^2}\geq||\hat F_{A_t}||^2_{L^2}.\nonumber
\ee
By the convergence results of Hong-Tian there exists a subsequence so that $\hat F_j\longrightarrow \hat F_\infty$ in $L^2$, and $\hat F_\infty$ has constant eigenvalues $\vec{\lambda}$. We then have:
\be
||\hat F_{A_t}||^2_{L^2}\geq ||\hat F_\infty||_{L^2}^2=\sum_{i=1}^r\lambda_i^2.\nonumber
\ee
Yet as we have just seen, $\vec{\mu}\leq\vec{\lambda}$. Thus from Proposition 12.6 in $\cite{AB}$ we have
\be
\sum_{i=1}^r\lambda_i^2\geq \sum_{i=1}^r\mu_i^2.\nonumber
\ee
The proof of the proposition is complete by noting $\sum_{i=1}^r\mu_i^2=||\Psi||^2_{L^2},$ so
\be
||\hat F_{A_0}||^2_{L^2}\geq||\hat F_{A_t}||^2_{L^2}\geq ||\hat F_\infty||_{L^2}^2=\sum_{i=1}^r\lambda_i^2\geq\sum_{i=1}^r\mu_i^2=||\Psi||^2_{L^2}.\nonumber
\ee
\end{proof}

We now turn to the main proposition in this section. Before we begin, we first introduce two differential operators used in the proof. Recall that $\nabla$ denotes the Chern connection on all associated bundles of $E$. We define the following Laplacians, which we write down in local coordinates
\be
\Delta=g^{j\bar k}\nabla_j\nabla_{\bar k}\qquad{\rm and}\qquad \bar\Delta=g^{j\bar k}\nabla_{\bar k}\nabla_j.\nonumber
\ee
We remark these operators are defined using the ``analyst convention.'' Now, from equation (2.1.1) from \cite{Siu}, one sees that along the Donaldson heat flow, the evolution of the curvature endomorphism is given by
\be
\pl_t(\hat F)=-g^{j\bar k}\nabla_{\bar k}\nabla_j(H^{-1}\pl_t H)=\bar\Delta( \hat F).\nonumber
\ee
In this special case, we can replace $\bar\Delta$ with $\Delta$, as the difference is given by a commutator $[ \hat F, \hat F]$, which vanishes. Thus $\pl_t(\hat F)=\Delta( \hat F)=\bar\Delta( \hat F)$, and this important fact will be utilized in the following proposition.

\begin{proposition}
\label{DHFAPS}
Along the Donaldson heat flow we have the following convergence:
\be
||\hat F_t-\Psi_t||_{L^2}^2\longrightarrow 0,\nonumber
\ee
as $t$ approaches infinity. 
\end{proposition}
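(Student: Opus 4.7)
The plan is to exploit the lower bound on the $P$-functional from Theorem \ref{lowerb} to control $\|\Lambda F_t - \Psi_t\|^2_{L^2}$ in time, and then upgrade integrability in time to convergence to zero. First, I would differentiate $P(H_0,H_t)$ along the flow. Since $H_t^{-1}\partial_t H_t = h_t^{-1}\dot h_t = -(\Lambda F_t - \mu I)$, the defining property of $P$ gives
\be
-\partial_t P(H_0,H_t) = \int_X {\rm Tr}\bigl((\Lambda F_t - \Psi_t)(\Lambda F_t - \mu I)\bigr)\,\o^n.\nonumber
\ee
Writing $\Lambda F - \mu I = (\Lambda F - \Psi) + (\Psi - \mu I)$ and using that $\Lambda F$ and $\Psi$ are Hermitian, the first piece contributes precisely $\|\Lambda F_t - \Psi_t\|^2_{L^2}$.

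The main algebraic step will be to show that the cross term $\int_X {\rm Tr}((\Lambda F - \Psi)(\Psi - \mu I))\,\o^n$ is non-negative. I would use Abel summation to rewrite
\be
\Psi = \sum_{i=1}^{p-1}(\mu(Q^i)-\mu(Q^{i+1}))\pi^i + \mu(Q^p) I,\nonumber
\ee
where the coefficients are strictly positive by slope-decreasingness. Combined with the Chern–Weil identity for a weakly holomorphic $L^2_1$ projection, $\int_X {\rm Tr}(\Lambda F\,\pi^i)\o^n = {\rm deg}(S^i) + \|\bar\pl\pi^i\|^2_{L^2}$, and the topological identities $\int_X{\rm Tr}(\Lambda F)\o^n = \int_X{\rm Tr}(\Psi)\o^n = {\rm deg}(E)$ together with $\int_X{\rm Tr}(\Psi^2)\o^n = \sum_i\mu(Q^i)^2 rk(Q^i)$, the purely topological contributions telescope to zero and one is left with
\be
\int_X{\rm Tr}\bigl((\Lambda F - \Psi)(\Psi - \mu I)\bigr)\o^n = \sum_{i=1}^{p-1}(\mu(Q^i)-\mu(Q^{i+1}))\|\bar\pl\pi^i\|^2_{L^2}\geq 0.\nonumber
\ee
This produces the differential inequality $\|\Lambda F_t - \Psi_t\|^2_{L^2} \leq -\partial_t P(H_0,H_t)$ advertised in the introduction.

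Integrating from $0$ to $T$ and using $P(H_0,H_0)=0$ together with the lower bound of Theorem \ref{lowerb} gives
\be
\int_0^T \|\Lambda F_t - \Psi_t\|^2_{L^2}\,dt \leq -P(H_0,H_T) \leq C,\nonumber
\ee
uniformly in $T$. Hence $\|\Lambda F_t - \Psi_t\|^2_{L^2}$ lies in $L^1([0,\infty))$, and there exists a subsequence $t_j\to\infty$ along which it converges to zero.

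The remaining step, which I expect to be the main obstacle, is to upgrade subsequential convergence to convergence of the full family. The standard way is to establish a one-sided time derivative bound $\frac{d}{dt}\|\Lambda F_t - \Psi_t\|^2_{L^2}\leq C$, for then an $L^1$ non-negative function with bounded forward difference must tend to zero; otherwise one produces disjoint intervals of uniformly positive length on which the integrand is bounded below, contradicting integrability. To get this bound, I would differentiate under the integral, using Donaldson's parabolic maximum principle (along the Donaldson heat flow $|\Lambda F_t|$ is non-increasing, hence uniformly bounded) to control $\partial_t(\Lambda F)$, and the uniform $L^2$ control on the $\bar\pl\pi^i$ (together with their smooth dependence on $H_t$ off the singular set) to control $\partial_t \Psi_t$. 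The delicate point is that the projections $\pi^i$ are only $L^2_1$, so the time derivative of $\Psi_t$ must be handled via the weak characterization from Uhlenbeck–Yau; once controlled, the differential inequality closes and the proposition follows.
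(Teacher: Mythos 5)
Your overall architecture matches the paper's: establish $\|\Lambda F_t-\Psi_t\|^2_{L^2}\leq -C\,\pl_t P(H_0,H_t)$, integrate against the lower bound of Theorem \ref{lowerb} to get integrability in time, extract a subsequence $t_m$ along which $Y(t):=\|\Lambda F_t-\Psi_t\|^2_{L^2}$ tends to zero, and close with a one-sided differential inequality for $Y$. Where you genuinely diverge is in the key inequality. The paper proves $Y(t)\leq 2\int_X{\rm Tr}((\Lambda F_t-\Psi_t)(\Lambda F_t-\mu(E)I))\o^n$ by expanding both sides and reducing to $\|\Psi\|^2_{L^2}\leq\|\Lambda F_t\|^2_{L^2}$, i.e.\ to Proposition \ref{AB1}, whose proof passes through the Hong--Tian compactness theorem and the Atiyah--Bott comparison of $r$-tuples. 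Your route --- Abel summation $\Psi=\sum_{i<p}(\mu(Q^i)-\mu(Q^{i+1}))\pi^i+\mu(Q^p)I$ combined with the Chern--Weil formula $\int_X{\rm Tr}(\Lambda F\pi^i)\o^n=\deg(S^i)+\|\bar\pl\pi^i\|^2_{L^2}$, yielding $\int_X{\rm Tr}((\Lambda F-\Psi)(\Psi-\mu(E) I))\o^n=\sum_{i<p}(\mu(Q^i)-\mu(Q^{i+1}))\|\bar\pl\pi^i\|^2_{L^2}\geq 0$ --- is correct (the topological terms do telescope to $\sum_i\mu(Q^i)^2rk(Q^i)=\int{\rm Tr}(\Psi^2)$), is entirely elementary and local to the flow, gives the constant $1$ in place of $2$, and as a byproduct reproves Proposition \ref{AB1} via Cauchy--Schwarz, since it gives $\|\Psi\|^2_{L^2}\leq\int_X{\rm Tr}(\Lambda F\Psi)\o^n\leq\|\Lambda F\|_{L^2}\|\Psi\|_{L^2}$. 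That is a genuine simplification of this step.

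For the upgrade from subsequential to full convergence, the paper proves the sharper estimate $\pl_tY\leq f(t)+g(t)$ with $f,g\to 0$ and integrates from $t_m$ to $t$; your coarser $\pl_tY\leq C$ together with the ``nonnegative $L^1$ function with bounded forward derivative tends to zero'' lemma is also sufficient. The one under-specified point is your justification of $\pl_tY\leq C$: the $C^0$ bound on $\Lambda F$ from the maximum principle does not by itself control the term $\int_X{\rm Tr}(\pl_t\Lambda F\,(\Lambda F-\Psi))\o^n$ with $\pl_t\Lambda F=\bar\Delta\Lambda F$. You should integrate by parts, let the resulting $-\|\nabla\Lambda F\|^2_{L^2}$ absorb the cross term $\int\nabla\Lambda F\cdot\nabla\Psi$ via Young's inequality, and bound the leftover $C\|\nabla\Psi\|^2_{L^2}$ by the Chern--Weil estimate $\|\bar\pl\pi^i\|^2_{L^2}\leq\|\Lambda F-\Psi\|_{L^1}\leq C$; the contribution of $\pl_t\pi^i=\pi^i(h^{-1}\dot h)(I-\pi^i)$ is bounded pointwise by $C|\Lambda F-\Psi|$ and is harmless. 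With those details supplied, your argument closes.
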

\begin{proof}
We first show that we have $L^2$ convergence of $\hat F_t-\Psi_t$ along a subsequence. 
Note that this $L^2$ norm is computed with respect to the evolving metric $H$. As a first step, we show that for all times along the Donaldson heat flow, we have the inequality:
\be
\label{keyineq}
||\hat F_t-\Psi_t||_{L^2}^2\leq 2 \int_X{\rm Tr}((\hat F_t-\Psi_t)(\hat F_t-\mu(E)I))\o^n.
\ee
Expanding out the left hand side gives:
\be
\int_X{\rm Tr}(\hat F_t^2-2\hat F_t\Psi_t+\Psi_t^2)\o^n,\nonumber
\ee
while the right hand side is given by
\be
\int_X{\rm Tr}(2\hat F_t^2-2\hat F_t\Psi_t-2\mu(E)\hat F_t+2\mu(E)\Psi_t)\o^n.\nonumber
\ee
Thus the inequality $\eqref{keyineq}$ reduces to
\be
||\Psi||^2_{L^2}\leq||\hat F_t||^2_{L^2}+2\mu(E)\int_X{\rm Tr}({\Psi_t}-\hat F_t)\o^n.\nonumber
\ee
Yet $\Psi_t$ is constructed so that Tr$(\Psi_t)=deg(E)$, so the second term on the right vanishes. Thus inequality $\eqref{keyineq}$ follows from Proposition $\ref{AB1}$. Note that the term on the right hand side of inequality $\eqref{keyineq}$ is minus the time derivative of the $P$ functional along the Donaldson heat flow.

We now have that:
\bea
\int_0^\infty||\hat F_t-\Psi_t||_{L^2}^2dt&\leq& 2 \int_0^{\infty}\int_X{\rm Tr}((\hat F_t-\Psi_t)(\hat F_t-\mu(E)I))\o^ndt\nonumber\\
&=&-2\int_0^{\infty}\pl_t P(H_0,H(t))dt\nonumber\\
&=&2 P(H_0,H_0)-\lim_{t\rightarrow\infty} 2 P(H_0,H(t))\leq C,\nonumber
\eea
which is bounded since the $P$ functional is bounded from below. Thus there exists a sequence of times $t_i$ along the Donaldson heat flow such that
\be
Y(t_i):=||\hat F_{t_i}-\Psi_{t_i}||_{L^2}^2\longrightarrow 0,\nonumber
\ee
proving there exists an $L^2$ approximate Hermitian structure on $E$. Note that
\be
\sum_{m=0}^\infty\int_m^{m+1}Y(t)dt<\infty,\nonumber
\ee
hence we can find a sequence $t_m$ such that $t_m\in[m,m+1)$ and $Y(t_m)\longrightarrow 0$.

Next we show such a structure exists for any subsequence of times. Note that $\Psi$ can be rewritten as:
\be
\Psi=\sum_i c_i\pi^i,\nonumber
\ee
where the $c_i$ are all positive constants since the Harder-Narasimhan filtration is slope decreasing. First we see that the time derivative of the projection $\pi^i$ along the path $H_t$ is given by
\be
\label{evolvepi}
\pl_t\pi^i=\pi^i(H_t^{-1}\pl_t H_t)(I-\pi^i),
\ee
 as long as $H_t^{-1}\pl_t H_t$ is self adjoint with respect to $H$. To see this, first note that the image of $\pl_t\pi^i$ lies in $S$ since it is given by the difference of two projections. Consider two sections $\phi, \psi\in\Gamma(X,E)$ such that $\phi$ lies in the image of $f_i$. Then one has
 \be
 0=\langle \phi, (I-\pi^i) \psi\rangle_H.\nonumber
 \ee
 Taking the time derivative of the above expression we see 
 \be
 0=-\langle \phi, \pl_t\pi^i \psi\rangle_H+\langle H^{-1}\pl_tH\phi, (I-\pi^i) \psi\rangle_H,\nonumber
 \ee
 where the second term on the right comes from the time derivative falling on the metric $H$. From here \eqref{evolvepi} follows. 
 We compute how $Y(t)$ evolves with time:
\bea
\pl_tY(t)&=&2\int_X{\rm Tr}((\pl_t\hat F-\pl_t\Psi)(\hat F-\Psi))\o^n\nonumber\\
&=&\int_X{\rm Tr}(\Delta \hat F(\hat F-\Psi))\o^n-\sum_ic_i\int_X{\rm Tr}(\pi^i(H_t^{-1}\pl_t H_t)(I-\pi^i)(\hat F-\Psi))\o^n\nonumber\\
&\leq&\int_Xg^{j\bar k}{\rm Tr}(\nabla_{\bar k}\hat F\nabla_j\Psi)\o^n+\sum_ic_i\int_X{\rm Tr}(\pi^i(\hat F-\mu(E)I)(I-\pi^i)(\hat F-\Psi))\o^n,\nonumber
\eea
after integration by parts on the first term. Note that $\pi^i \mu(E) I (I-\pi^i)=0$. Now, the part of $\hat F$ that sends ${S^i}^\perp$ to $S^i$ is $-g^{j\bar k}\ti \nabla_j\nabla_{\bar k}\pi^i$, where $\ti\nabla_j$ is the covariant derivative for $Hom({S^i}^\perp,S^i)$. The difference between $\ti\nabla_j$ and $\nabla_j$ is two second fundamental form terms
\be
-g^{j\bar k}\ti \nabla_j\nabla_{\bar k}\pi^i=-g^{j\bar k}\nabla_j\nabla_{\bar k}\pi^i+g^{j\bar k}\nabla_j\pi^i\nabla_{\bar k}\pi^i-g^{j\bar k}\nabla_{\bar k}\pi^i\nabla_j\pi^i.\nonumber
\ee
Thus it follows that
\bea
\int_X{\rm Tr}(\pi^i(\hat F-\mu(E)I)(I-\pi^i)(\hat F-\Psi))\o^n=-\int_X{\rm Tr}(g^{j\bar k}\ti \nabla_j\nabla_{\bar k}\pi^i)(\hat F-\Psi))\o^n,\nonumber
\eea
and the right hand side is bounded by
\be
-\int_X{\rm Tr}(g^{j\bar k}\nabla_j\nabla_{\bar k}\pi^i(\hat F-\Psi))\o^n+C||\nabla\pi^i||^2_{L^2},\nonumber
\ee
where we used the $L^\infty$ bound for $\hat F$ (again by Corollary 17 from \cite{Don1}). We then have:
\be
\sum_ic_i\int_X{\rm Tr}(g^{j\bar k}\nabla_j\nabla_{\bar k}\pi^i\Psi)\o^n=\int_X{\rm Tr}(g^{j\bar k}\nabla_j\nabla_{\bar k}\Psi\Psi)=-||\nabla(\Psi)||^2_{L^2}\leq0.\nonumber
\ee
Thus returning to our initial computation of $\pl_t Y(t)$ and applying H\"older's inequality we have:
\bea
\pl_tY(t)&\leq& C_0||\nabla \hat F||_{L^2}||\nabla \Psi||_{L^2}+C||\nabla\Psi||^2_{L^2}\nonumber\\
\label{finalone}
&\leq&C_0||\nabla \hat F||^2_{L^2}+C||\nabla\Psi||^2_{L^2}.
\eea
We show both terms on the right hand side go to zero as $t$ approaches infinity. Set $f(t)=||\nabla \hat F||_{L^2}^2$. Then Proposition 9 from $\cite{HT}$ shows precisely that  $f(t)\longrightarrow 0$ as $t$ goes to infinity.

We now concentrate on $||\nabla\Psi||^2_{L^2}$. As a first step we show that for any $\pi^i$ from $\Psi$ we have that $||\nabla\pi^i||^2_{L^2}$ goes to zero along a subsequence, and to do so we need a modification of the Chern-Weil formula. Once again recall that $\o$ is normalized so $\int_X\o^n=1$. We have
\be
\int_X{\rm Tr}(\Psi\circ\pi^i)\o^n=\sum_k\int_X{\rm Tr}(\mu(Q^k)(\pi^k-\pi^{k-1})\circ\pi^i)\o^n.\nonumber
\ee
However, if $k\geq i$, then because the Harder-Narasimhan filtration is ordered by inclusion we know $\pi^k\circ\pi^i=\pi^i$, so

\bea
\int_X{\rm Tr}(\Psi\circ\pi^i)\o^n&=&\sum_{k\leq i}\int_X{\rm Tr}(\mu(Q^k)(\pi^k-\pi^{k-1}))\o^n.\nonumber\\
&=&\sum_{k\leq i}\mu(Q^k)\,rk(Q^k)=\sum_{k\leq i}{\rm deg}(Q^k).\nonumber
\eea
We note that ${\rm deg}(Q^k)={\rm deg}(S^k)-{\rm deg}(S^{k-1})$, so the sum $\sum_{k\leq i}{\rm deg}(Q^k)$ is a telescoping sum. Thus the only contribution is the term coming from $k=i$, so by the Chern-Weil formula:
\be
\label{CW}
\sum_{k\leq i}{\rm deg}(Q^k)={\rm deg}(S^i)=\int_X{\rm Tr}(\hat F\circ\pi^i)-||\nabla\pi^i||^2_{L^2}.\nonumber
\ee
Thus
\be
\int_X{\rm Tr}(\Psi\circ\pi^i)\o^n=\sum_{k\leq i}{\rm deg}(Q^k)=\int_X{\rm Tr}(\hat F\circ\pi^i)-||\nabla\pi^i||^2_{L^2}.\nonumber
\ee
Therefore, for each projection $\pi^{i}_j$ in our sequence along the Yang-Mills flow, we have the following formula:
\be
||\nabla\pi^i||^2_{L^2}=\int_X{\rm Tr}((\hat F-\Psi)\circ\pi^{i})\o^n.\nonumber
\ee
And because the eigenvalues of $\pi^{i}$ are either $0$ or $1$, it follows that
\be
\label{sffb}
||\nabla \pi^{i}||^2_{L^2}\leq\int_X|\hat F-\Psi|\o^n.
\ee
This gives:
 \be
 ||\nabla\pi^i||^2_{L^2}\leq||\hat F-\Psi||_{L^1}\leq||\hat F-\Psi||_{L^2},\nonumber
\ee
where the last inequality follows since the volume of $X$ is one. Squaring both sides gives $||\nabla\pi^i||^4_{L^2}\leq Y(t)$, and thus $||\nabla\pi^i(t_m)||^4_{L^2}$ goes to zero as $m$ goes to infinity. Therefore the square root $||\nabla\pi^i(t_m)||^2_{L^2}$ goes to zero as $m$ goes to infinity as well. 

We now show $||\nabla\pi^i||^2_{L^2}$ goes to zero for all time $t$ approaching infinity. To do so we prove a simple differential inequality. We apply the Chern-Weil formula to the projection evolving along the flow, and take the derivative in time:
\bea
\pl_t||\nabla\pi^i||^2_{L^2}&=&\int_X{\rm Tr}(\pl_t\hat F \pi^i)\o^{n}+\int_X{\rm Tr}(\hat F\pl_t\pi^i)\o^{n}\nonumber\\
&=&\int_X{\rm Tr}(\bar\Delta \hat F \pi^i)\o^{n}-\int_X{\rm Tr}(\hat F\pi^i\hat F(I-\pi^i))\o^{n},\nonumber
\eea
 Note the second integral on the right is non-positive. Now we integrate by parts twice:
\bea
\pl_t||\nabla\pi^i||^2_{L^2}&\leq&\int_X{\rm Tr}(\hat F g^{j\bar k}\nabla_j\nabla_{\bar k}\pi^i)\o^{n}\nonumber\\
&=&\int_X{\rm Tr}(\hat F (g^{j\bar k}\ti\nabla_j\nabla_{\bar k}\pi^i+g^{j\bar k}\nabla_j\pi^i\nabla_{\bar k}\pi^i-g^{j\bar k} \nabla_{\bar k}\pi^i\nabla_j\pi^i))\o^{n}\nonumber\\
&\leq&-\int_X{\rm Tr}(\hat F\pi^i\hat F(I-\pi^i))\o^{n}+C||\nabla\pi^i||^2_{L^2}\leq C||\nabla\pi^i||^2_{L^2}.\nonumber
\eea
This estimate, along with the fact proven in the last paragraph that $||\nabla\pi^i||^2_{L^2}$ approaches zero along a subsequence $t_m\in[m,m+1)$, implies that $||\nabla\pi^i||^2_{L^2}$ goes to zero for all time $t$ approaching infinity (for details see \cite{PSSW}).

We now return to $\eqref{finalone}$. Set $g(t)=||\nabla \Psi||^2_{L^2}$. We have seen that both $f(t)$ and $g(t)$ go to zero for all $t$ approaching infinity. Pick a $t\in[m+1,m+2)$. Integrating both sides of  $\eqref{finalone}$ from $t_m$ to $t$ we get
\bea
\int_{t_m}^t\pl_sY(s)ds&\leq&C\int_{t_m}^tf(s)+g(s)ds,\nonumber
\eea
which implies
\be
Y(t)\leq Y(t_m)+2C\sup_{s\in(m,m+2)}(f(s)+g(s)).\nonumber
\ee
Sending $m$ to infinity we see $Y(t)$ goes to zero. Thus there exists an $L^2$ approximate Hermitian structure along the Donaldson heat flow. 
\end{proof}

At this point we can now prove Theorem $\ref{lastcor}$ as stated in the introduction, generalizing a result of Atiyah and Bott. 
First we review some notation. Consider a flag $\cal F$ of subbundles:
\be
0=E^0\subset E^1\subset\cdots\subset E^q=E.\nonumber
\ee
Define $\cal F$ to be slope decreasing if $\mu(E^1)>\mu(E^2)>...>\mu(E)$. Let ${\cal Q}^i=E^i/E^{i-1}$, and recall that 
\be
\Phi({\cal F})^2=\sum_{i=0}^q\mu({\cal Q}^i)^2rk({\cal Q}^i).\nonumber
\ee
We now prove that for all $\cal F$ slope decreasing:
\be
\inf_A||\hat F_A||^2_{L^2}=\sup_{\cal F}\Phi({\cal F})^2.\nonumber
\ee
\begin{proof}

We begin by showing $\sup_{\cal F}\Phi({\cal F})^2=||\Psi_H||^2_{L^2}$. Since we already know the supremum is attained if $\cal F$ is the Harder-Narasimhan filtration of $E$, all we need is  $||\Psi_H||^2_{L^2}=\sum_{i=0}^p\mu(Q^i)^2rk(Q^i),$ which follows directly from $\eqref{square}$. We drop the $H$ from $\Psi_H$ since this norm is independent of metric. To conclude we show $\inf_A||\hat F_A||^2_{L^2}=||\Psi||^2_{L^2}$. One inequality follows from Proposition $\ref{AB1}$, which states $||\hat F_H||^2_{L^2}\geq||\Psi||^2_{L^2}$ for all metrics $H$. The infimum is now obtained by taking a sequence of metrics $H_t$ where $t\longrightarrow\infty$ along the Donaldson heat flow.
\end{proof}

We now demonstrate how Proposition $\ref{DHFAPS}$ gives an $L^2$ approximate Hermitian structure along the Yang-Mills flow. First we state a fact about adjoints. In local coordinates, the adjoint of an endomorphism $T$ with respect to the metric $H$ is given by
\be
T^*{}^\al{}_\b=H^{\al\bar\gamma}\overline{ T^\rho{}_\gamma}H_{\bar \rho\b}.\nonumber
\ee 
Notice now that if we wanted to compute the adjoint with respect to the metric $H_0$, denoted by $*_0$, we have
\be
T^{*_0}{}^\al{}_\b=H_0^{\al\bar\gamma}\overline{ T^\rho{}_\gamma}H_{0\bar \rho\b}=H_0^{\al\bar\gamma}H_{\bar\gamma\nu}H^{\nu\bar\kappa}\overline{ T^\sigma{}_\kappa}H_{\bar \sigma\eta}H^{\eta\bar\rho}H_{0\bar \rho\b}=h^\al{}_\nu T^*{}^\nu{}_\eta h^{-1}{}^\eta{}_\b.\nonumber
\ee
Thus in matrix notation we have $T^{*_0}=hT^*h^{-1}$. We now see that $||\hat F||_{L^2(H)}^2=||\hat F_A||^2_{L^2(H_0)}$, since by $\eqref{GRL}$ we have
\bea
||\hat F||_{L^2(H)}^2&=&\int_X{\rm Tr}(\hat F\hat F^*)\o^n=\int_X{\rm Tr}(w^{-1}\hat F_Aw(w^{-1}\hat F_Aw)^*)\o^n\nonumber\\
&=&\int_X{\rm Tr}(\hat F_Ah\hat F_A^*h^{-1})\o^n=\int_X{\rm Tr}(\hat F_A\hat F_A^{*_0})\o^n=||\hat F_A||^2_{L^2(H_0)}.\nonumber
\eea

Next we need to relate our projections evolving along the Donaldson heat flow to projections evolving along the Yang-Mills flow. In the case of the Donaldson heat flow, the orthogonal projection $\pi_t$ onto a fixed subsheaf $S\subset E$ evolves due to the fact that the metric $H$ is changing. Along the Yang-Mills flow, our metric $H_0$ is fixed, however the subsheaf $S$ is acted on by the complexified gauge transformation $w$. Thus the projection $\pi_w$ onto $w(S)$ evolves as well. 
\begin{lemma}
The two evolving projections are related as follows
\be
\pi_w=w\pi_tw^{-1}\nonumber
\ee
\end{lemma}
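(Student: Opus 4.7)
The plan is to verify that $w\pi_t w^{-1}$ satisfies the three defining properties of the orthogonal projection onto $w(S)$ with respect to $H_0$: it is idempotent, its image is $w(S)$, and it is self-adjoint with respect to $H_0$. The first two are purely algebraic and essentially automatic; the real content is the self-adjointness, which uses the adjoint-comparison formula $A^{*_0}=hA^*h^{-1}$ just derived in the preceding paragraph, together with the fact that $w=h^{1/2}$ is self-adjoint with respect to $H_0$.

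First I would observe that $(w\pi_t w^{-1})^2 = w\pi_t^2 w^{-1}=w\pi_t w^{-1}$, so this is a projection. For a section $v=w(u)$ of $w(S)$, one has $(w\pi_t w^{-1})(v)=w\pi_t(u)=w(u)=v$, and conversely the image of $w\pi_t w^{-1}$ clearly lies in $w(S)$ since $\pi_t$ takes values in $S$. Thus $w\pi_t w^{-1}$ is a (not yet orthogonal) projection whose image is exactly $w(S)$.

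The key step is orthogonality. Since $h=H_0^{-1}H$ is self-adjoint and positive definite with respect to $H_0$, its square root $w=h^{1/2}$ is as well, so $w^{*_0}=w$ and $(w^{-1})^{*_0}=w^{-1}$. Using the formula $A^{*_0}=hA^*h^{-1}$ from the excerpt and the fact that $\pi_t$ is orthogonal with respect to $H_t$, hence $\pi_t^*=\pi_t$, I compute
\be
(w\pi_t w^{-1})^{*_0} = (w^{-1})^{*_0}\,\pi_t^{*_0}\,w^{*_0} = w^{-1}(h\pi_t h^{-1})w = w^{-1}w^2\pi_t w^{-2}w = w\pi_t w^{-1}.\nonumber
\ee
Thus $w\pi_t w^{-1}$ is self-adjoint with respect to $H_0$, which together with the two previous points identifies it as the $H_0$-orthogonal projection onto $w(S)$, namely $\pi_w$.

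The main (and essentially only) obstacle is keeping straight the two different adjoint operations; once the formula $A^{*_0}=hA^*h^{-1}$ is in hand and one recognizes that $w$ is $H_0$-Hermitian, the conclusion is immediate. No analytic input is needed.
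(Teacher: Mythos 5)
Your proposal is correct and follows essentially the same route as the paper: check idempotency, identify the image as $w(S)$, and verify $H_0$-self-adjointness via $A^{*_0}=hA^*h^{-1}$ together with the facts that $\pi_t^*=\pi_t$ and that $w=h^{1/2}$ is $H_0$-Hermitian. Your write-up is in fact slightly more careful than the paper's, since you explicitly justify that the image is $w(S)$ and that $w^{*_0}=w$, both of which the paper leaves implicit.
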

\begin{proof}
It is immediately clear that  $(w\pi_tw^{-1})^2=w\pi_tw^{-1}$, so $w\pi_tw^{-1}$ is a projection onto the subsheaf $w(S)$. We complete the lemma by showing it is self-adjoint with respect to $H_0$. 
\be
(w\pi_tw^{-1})^{*_0}=w^{-1}(\pi_t)^{*_0}w=h^{-1/2}h\pi_t^*h^{-1}h^{1/2}=w\pi_tw^{-1}.\nonumber
\ee
\end{proof}
From this lemma we see that $w\Psi_tw^{-1}=\Psi_w$, where $\Psi_t$ is evolving along the Donaldson heat flow and $\Psi_w$ is evolving along the Yang-Mills flow. It follows that
\bea
||\hat F_t-\Psi_t||^2_{L^2(H)}&=&\int_X{\rm Tr}((w^{-1}\hat F_Aw-\Psi_t)(w^{-1}\hat F_Aw-\Psi_t)^*)\o^n\nonumber\\
&=&\int_X{\rm Tr}(\hat F_A-w\Psi_tw^{-1})h(\hat F_A-w\Psi_tw^{-1})^*h^{-1})\o^n\nonumber\\
&=&\int_X{\rm Tr}(\hat F_A-\Psi_w)(\hat F_A-\Psi_w)^{*_0})\o^n\nonumber\\
&=&||\hat F_A-\Psi_w||^2_{L^2(H_0)}.\nonumber
\eea
Thus we see that the Yang-Mills flow realizes an $L^2$ approximate Hermitian structure as well, proving Theorem $\ref{firstmaintheorem}$. From this point on we abuse notation and also refer to the endomorphism evolving along the Yang-Mills flow as $\Psi_t$, and which endomorphism we are using will be clear from context.

\section{Construction of an isomorphism}

In this section we use Theorem $\ref{firstmaintheorem}$ to better understand our limiting connection $A_{\infty}$, and following our previous work \cite{J1, J2} we are able to prove Theorem $\ref{main theorem}$. We recall our basic setup. Let $A_t$ be a family of connections evolving along the Yang-Mills flow. By a result of Hong and Tian $\cite{HT}$, there exists a subsequence of connections $A_j$ which converge in $C^\infty$ (on $X\backslash Z_{an}$ and modulo unitary gauge transformations), to a Yang-Mills connection $A_\infty$. Thus, always working on $X\backslash Z_{an}$,  we have a sequence of holomorphic structures $(E,\bar\pl_j)$ which converge in $C^\infty$ to a holomorphic structure $(E,\bar\pl_\infty)$. We denote this limiting holomorphic bundle by $E_\infty$.

We can now identity the Harder-Narasimhan type of the limiting connection $A_\infty$. 
Since $A_\infty$ is Yang-Mills, we have that $\hat F_\infty$ solves the following equation:
\be
-i\bar\pl_\infty\Lambda F_\infty+i\pl_\infty\Lambda F_\infty=0.\nonumber
\ee 
In particular $\hat F_\infty$ has locally constant eigenvalues. Because $\hat F_\infty$ is Hermitian, about any point in $X\backslash Z_{an}$, we can choose coordinates so that $\hat F_\infty$ has the following form:
\be
\label{fmatrix}
\hat F_\infty=\left( 
\begin{array}{cccc}
\lambda_1 I_1 & 0 & \cdots & 0\\
0 & \lambda_2 I_2  &\cdots&0\\
\vdots &\vdots& \ddots&\vdots\\
0&0&\cdots&\lambda_p I_p\end{array} 
\right).
\ee
Here $I_i$ are identity matrices whose rank is determined by the multiplicity of each eigenvalue $\lambda_i$. Assume that the eigenvalues are decreasing $\lambda_1>\lambda_2>\cdots>\lambda_q$. Now because $E$ realizes an $L^2$ approximate Hermitian structure along the Yang-Mills flow, we can precisely identify the eigenvalues of $\hat F_\infty$, so $\lambda_i=\mu(Q^i)$, and $rk(I_i)=rk(Q^i)$.

Furthermore, because $\hat F_\infty$ is of this special form, we know it will decompose $E_\infty$ into a direct sum of stable bundles:
\be
\label{decomp2}
E_\infty=\hat Q^1_\infty\oplus \hat Q^2_\infty\oplus\cdots \oplus \hat Q^q_\infty,
\ee
each admitting an induced smooth Hermitian-Einstein connection. Let $Z=Z_{an}\cup Z_{alg}$. Working on $X\backslash Z$, we prove the direct sum $\eqref{decomp2}$ is isomorphic to the graded double filtration $Gr^{hns}(E)$, which is the subject of the following proposition:

\begin{proposition}
\label{isom1}
Working with $\eqref{decomp2}$ above,  on $X\backslash Z$ each $\hat Q^i_\infty$ is isomorphic to a specific stable quotient from $Gr^{hns}(E)$.
\end{proposition}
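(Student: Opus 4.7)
The plan is to transfer the structure of the Harder--Narasimhan--Seshadri filtration on $E$ to the limiting bundle $E_{\infty}$ using the $L^2$ approximate Hermitian structure of Theorem \ref{firstmaintheorem}, then invoke stability to promote asymptotic splittings into genuine isomorphisms. I organize this into four steps.

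\textit{Step 1: Second fundamental forms vanish in the limit.} Along the Yang--Mills flow, let $\pi^{i}_{j}$ denote the orthogonal projection (with respect to the fixed metric $H_{0}$) onto the subsheaf $w_{j}(S^{i})$. Using the Chern--Weil type identity already derived in the proof of Proposition \ref{DHFAPS}, namely
\be
\|\nabla\pi^{i}_{j}\|_{L^{2}}^{2}=\int_{X}{\rm Tr}\bigl((\Lambda F_{A_{j}}-\Psi_{j})\circ\pi^{i}_{j}\bigr)\,\o^{n},\nonumber
\ee
together with Theorem \ref{firstmaintheorem} and $|\pi^{i}_{j}|\leq 1$, I obtain $\|\nabla\pi^{i}_{j}\|_{L^{2}}\to 0$. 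Since $\bar\pl\pi^{i}_{j}=-\gamma^{i}_{j}\circ p^{i}_{j}$, the second fundamental form $\gamma^{i}_{j}$ of each step of the Harder--Narasimhan filtration (transported by $w_{j}$) tends to zero in $L^{2}$ on $X\backslash Z$.

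\textit{Step 2: Convergence of the inclusion maps.} Following the strategy originally used by Donaldson in $\cite{Don1}$ (and adapted to the semi-stable case in $\cite{J2}$), I rescale the holomorphic inclusions $f^{i}:S^{i}\to E$ by the complexified gauge transformations $w_{j}$ so that they have unit $L^{2}$ norm, and then study their subsequential limit. The uniform $C^{0}$ bound on $\Lambda F_{A_{j}}$, combined with the $L^{2}$ control on $\gamma^{i}_{j}$ from Step 1, gives enough elliptic regularity to extract a subsequence converging in $C^{\infty}_{loc}$ on $X\backslash Z$ to a non-zero holomorphic map $f^{i}_{\infty}:S^{i}\to E_{\infty}$. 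Because the $L^{2}$ limit of $\gamma^{i}_{j}$ is zero, these limiting inclusions are compatible with the block-diagonal decomposition $E_{\infty}=\bigoplus_{k}\hat Q^{k}_{\infty}$ dictated by $\eqref{fmatrix}$, so $f^{i}_{\infty}$ lands in the direct sum of those blocks $\hat Q^{k}_{\infty}$ whose eigenvalue is $\geq\mu(Q^{i})$. By taking quotients, I obtain in the limit holomorphic maps from each $Q^{i}$ (extended as a reflexive sheaf) into the $\mu(Q^{i})$-eigenspace direct summand of $E_{\infty}$.

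\textit{Step 3: From maps to isomorphisms via stability.} Since each limiting $f^{i}_{\infty}$ is non-zero, holomorphic, and sends $S^{i}$ into a Hermitian--Einstein bundle of the correct slope, the stability argument of $\cite{Kob}$ (which is what drives the analogous step in $\cite{J2}$) forces $f^{i}_{\infty}$ to be injective on the open set where both sides are locally free, and equality of ranks and slopes forces it to identify $Q^{i}$ with the direct sum of those $\hat Q^{k}_{\infty}$ having eigenvalue exactly $\mu(Q^{i})$. In this way $E_{\infty}$ acquires a holomorphic splitting on $X\backslash Z$ into pieces corresponding exactly to the Harder--Narasimhan quotients of $E$.

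\textit{Step 4: Reduction to the semi-stable case and identification of Seshadri pieces.} On each Harder--Narasimhan block we are now in the situation of a semi-stable bundle $Q^{i}$ whose evolved Yang--Mills limit is a direct sum of stable Hermitian--Einstein bundles of slope $\mu(Q^{i})$. This is precisely the semi-stable case treated by the author in $\cite{J2}$, which identifies each stable summand with a term of the Seshadri filtration $Gr^{s}(Q^{i})$. Combining over $i$ yields $\hat Q^{i}_{\infty}\cong\tilde Q^{j}_{k}$ for an appropriate index pair, completing the identification on $X\backslash Z$.

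The main obstacle is Step 2: producing the limits $f^{i}_{\infty}$ requires careful estimates near the singular set $Z$, including the $L^{2}_{1}$ (rather than merely $L^{2}$) nature of the projections $\pi^{i}$ and controlling the effect of the gauge transformations $w_{j}$ on quantities that are only defined off $Z_{alg}$. Once those limits exist and are non-zero, the remaining algebraic identifications follow from $\cite{Kob}$ and the reduction to $\cite{J2}$ in a relatively clean manner.
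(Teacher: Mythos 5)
Your proposal follows essentially the same route as the paper: use Theorem \ref{firstmaintheorem} and the Chern--Weil identity to kill the second fundamental forms of the (gauge-transported) Harder--Narasimhan filtration in $L^2$, obtain a holomorphic splitting of $E_\infty$ into Hermitian--Einstein blocks, and then reduce each block to the semi-stable case of \cite{J2}, where Donaldson-style limits of the inclusion maps plus the stability argument of \cite{Kob} produce the isomorphisms; the paper merely packages the convergence through a general statement about projections (Proposition \ref{limit}) and an explicit intermediate lemma giving an $L^1$ approximate Hermitian--Einstein structure on each $Q^i$, which you use implicitly when you invoke \cite{J2} blockwise. One caution about your Step 3: a non-zero limit $f^i_\infty$ from the merely semi-stable $Q^i$ into a polystable bundle of the same slope need not be injective (it can kill a Seshadri sub-object), so the claim that stability "identifies $Q^i$ with the direct sum" is an overstatement --- only $Gr^{s}(Q^i)$ is so identified, which is exactly what your Step 4 correctly supplies, so the conclusion is unaffected.
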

We prove this proposition at the end of this section. First we need some convergence results. Consider the $L^2_1$ projections which define the Harder-Narasimhan filtration of $E$:
\be
\label{HNFp}
0\subset \pi^0\subset\pi^1\subset\pi^2\cdots\subset\pi^p\subset E.
\ee
Recall that along any one parameter family of connections we have a sequence of endomorphisms $w_j$ which define the action given by $\eqref{action}$. The action of $w_j$ also produces a sequence of filtrations $\{\pi^{i}_j\}$, where each $\pi^{i}_j$ is defined by orthogonal projection onto the subsheaf $w_j(\pi^{i})$. Our first goal is to show that this sequence of filtrations converges along a subsequence, with two assumptions on our sequence of connections. After the proposition we show these assumptions hold in our case.
\begin{proposition}
\label{limit}
Let $\pi$ be the $L^2_1$ projection associated to a subsheaf ${\cal F}\subset E$, and let $Z({\cal F})$ be the singular set of ${\cal F}$. Let $\{A_j\}$ be a sequence of connections, along with corresponding complexified gauge transformations $\{w_j\}$. The action of $w_j$ produces a sequence of projections $\{\pi_j\}$ defined by orthogonal projection onto the subsheaf $w_j(\pi)$. Assume that:

\medskip

i) For any compact subset $K\subset X\backslash(Z_{an}\cup Z({\cal F}))$, we have $A_j\longrightarrow A_\infty$ in $C^\infty(K)$.

\medskip

ii) $||\bar\pl_j\pi_j||^2_{L^2}\longrightarrow 0$.

\medskip
Then there exists a subsequence of projections (still denoted $\pi_j$) which converges in $L^2_1$ to a limiting subsheaf $\pi_\infty$. Furthermore, under the same assumptions the limiting projection $\pi_\infty$ is smooth away from $Z_{an}\cup Z({\cal F})$.
\end{proposition}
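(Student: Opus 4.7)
The strategy is to extract the subsequence by weak $L^2_1$ compactness: use assumption (ii) to obtain a uniform $L^2_1$ bound on $\{\pi_j\}$, pass to a weak limit, and then use assumption (i) together with elliptic regularity to upgrade the regularity of the limit on the open set $X \setminus (Z_{an} \cup Z({\cal F}))$.

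For the uniform bound, note first that $|\pi_j|_{H_0} \leq 1$ pointwise since $\pi_j$ is an $H_0$-orthogonal projection, which controls the $L^2$ norm. For the derivative, the standard relation $\|\pl_{A_j} \pi_j\|_{L^2} = \|\bar\pl_j \pi_j\|_{L^2}$, valid for $H_0$-self-adjoint $\pi_j$, combined with hypothesis (ii) gives $\|\nabla^{A_j} \pi_j\|_{L^2} \to 0$. To convert this gauge-dependent estimate into a usable $L^2_1$ bound I would work locally on $X \setminus Z_{an}$: by Uhlenbeck compactness one can select, on small coordinate balls covering this open set, gauges in which $A_j - A_0$ stays uniformly bounded in $L^2_1$; combined with the pointwise bound on $\pi_j$ this controls the commutator $[A_j - A_0, \pi_j]$ in $L^2$ and hence bounds $\|\nabla^{A_0} \pi_j\|_{L^2}$ locally. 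A diagonal argument then produces a subsequence (still denoted $\pi_j$) converging weakly in $L^2_1$ on compact subsets of $X \setminus Z_{an}$ and strongly in $L^2$ to some $\pi_\infty$. Passing to the limit in the pointwise identities $\pi_j^2 = \pi_j$ and $\pi_j^* = \pi_j$, which are stable under $L^2$ convergence of uniformly $L^\infty$-bounded sequences, shows that $\pi_\infty$ is again an orthogonal projection.

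To identify $\pi_\infty$ as a subsheaf of $E_\infty$, I would test the convergence $\bar\pl_j \pi_j \to 0$ in $L^2$ against smooth compactly supported sections and use the $C^\infty$ convergence $A_j \to A_\infty$ from assumption (i); the exact identity $(I - \pi_j)\,\bar\pl_j \pi_j = \bar\pl_j \pi_j$ then yields $(I - \pi_\infty)\,\bar\pl_\infty \pi_\infty = 0$ weakly on $X \setminus (Z_{an} \cup Z({\cal F}))$. By the Uhlenbeck-Yau theorem on weakly holomorphic $L^2_1$ projections recalled in Section 2, $\pi_\infty$ extends across the singular set to a globally defined coherent subsheaf. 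Smoothness of $\pi_\infty$ on $X \setminus (Z_{an} \cup Z({\cal F}))$ then follows from elliptic bootstrapping for the overdetermined system satisfied by a weakly holomorphic projection on the smooth background bundle $(E_\infty, A_\infty)$. The principal technical obstacle I anticipate is exactly the transition from the gauge-dependent estimate to a uniform $L^2_1$ bound: managing the Uhlenbeck gauges across a neighborhood basis of $X \setminus Z_{an}$, verifying that the resulting weak limit is intrinsic (independent of the local gauge choices), and controlling the behavior of $\pi_j$ near $Z_{an}$ well enough to produce a single global weak $L^2_1$ limit are the most delicate steps.
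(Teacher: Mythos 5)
Your skeleton matches the paper's: bound $\nabla_j\pi_j$ in $L^2$ using (ii) and the self-adjointness identity $|\bar\pl_j\pi_j|=|\pl_j\pi_j|$, extract a weak $L^2_1$ limit $\pi_\infty$, compare $\bar\pl_j$ with $\bar\pl_\infty$ on compact sets via (i) to show $\pi_\infty$ is weakly holomorphic, invoke Uhlenbeck--Yau to get a coherent subsheaf, and then argue regularity away from $Z_{an}\cup Z({\cal F})$. One place you over-engineer: the detour through Uhlenbeck compactness and local gauge fixing is unnecessary, because assumption (i) already hands you $A_j\to A_\infty$ in $C^\infty(K)$; combined with the pointwise bound $|\pi_j|\le 1$ this controls $[A_j,\pi_j]$ in $L^\infty(K)$ directly, so the commutator estimate and the "intrinsicness of the limit" worry both evaporate. (Neither you nor the paper really addresses the global-on-$X$ bound near $Z_{an}$; the paper simply asserts the weak $L^2_1$ limit, so you are not worse off there.)

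Two substantive shortfalls relative to what is claimed and later used. First, the proposition asserts convergence \emph{in} $L^2_1$, and the later arguments use the vanishing $\bar\pl_\infty\pi_\infty=0$ (the holomorphic splitting of $E_\infty$), not merely the weak-holomorphy identity $(I-\pi_\infty)\bar\pl_\infty\pi_\infty=0$ that your weak-limit argument delivers. The paper closes both points by observing that $\bar\pl_j\pi_j$ is an off-diagonal block of the connection in the decomposition $\eqref{connectiondecomp}$, so $\int_K|\ti\nabla_j(\bar\pl_j\pi_j)|^2\le\int_K|\nabla_j A_j|^2\le C$ by (i); this gives a uniform $L^2_2(K)$ bound, hence strong $L^2_1(K)$ convergence by Rellich, and then $\|\bar\pl_\infty\pi_\infty\|_{L^2(K)}\le\|\bar\pl_\infty\pi_j\|_{L^2(K)}+\|\pi_\infty-\pi_j\|_{L^2_1(K)}\to 0$. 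You should add this step. Second, your smoothness argument by "elliptic bootstrapping for a weakly holomorphic projection" presupposes that the limiting subsheaf is locally free away from $Z_{an}\cup Z({\cal F})$, which is not automatic from the weak holomorphy alone; the paper instead gets smoothness by noting that the second fundamental forms are components of the connections, which converge in $C^\infty_{loc}$ by (i), so the $\pi_j$ themselves converge in $C^\infty$ on compact sets away from the singular set. That route is the one you want.
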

We note that assumption $ii)$ gives that $\pi_\infty$ splits $E_\infty$ holomorphically. A similar proposition is proved in $\cite{J1}$, however we include all the details here for the reader's convenience.
\begin{proof}

By assumption $ii)$ we have $||\bar\pl_j\pi_j||_{L^2}^2$ goes to zero as $j\rightarrow\infty$. Because $\pi_j=\pi_j^*$ it follows that $|\bar\pl_j\pi_j|^2=|\pl_j\pi_j|^2$, thus we have $\pl_j\pi_j$ is uniformly bounded in $L^2$ and $\pi_j$ converges along a subsequence to a weak limit $\pi_\infty$ in $L^2_1$. We must show that $\pi_\infty$ is a weakly holomorphic subbundle as defined in $\cite{Simp}$ or $\cite{UY}$, and thus represents a coherent subsheaf. This means we have to show $(I-\pi_\infty)\bar\pl_{\infty}\pi_\infty=0$ in $L^2$. Working on a compact set $K$ specified in assumption $i)$, we have: 
\be
\bar\pl_\infty\pi_j=\bar\pl_j\pi_j+(\bar\pl_\infty-\bar\pl_j)\pi_j\nonumber,
\ee
so it follows that
\bea
||\bar\pl_\infty\pi_j||_{L^2(K)}&\leq&||\bar\pl_j\pi_j||_{L^2(K)}+||(\bar\pl_\infty-\bar\pl_j)\pi_j||_{L^2(K)}\nonumber\\
&\leq&||\bar\pl_j\pi_j||_{L^2(K)}+||A_j-A_\infty||_{L^\infty(K)}||\pi_j||_{L^2(K)},\nonumber
\eea
We have that $A_j\rightarrow A_\infty$ in $L^\infty(K)$ by assumption $i)$. Because $||\bar\pl_j\pi_j||_{L^2}\rightarrow 0$ it follows that $||\bar\pl_\infty\pi_j||_{L^2(K)}\rightarrow 0$. Finally, from the simple formula:
\be
\bar\pl_\infty\pi_\infty=\bar\pl_\infty\pi_j+\bar\pl_\infty(\pi_\infty-\pi_j),\nonumber
\ee
we see that
\bea
||\bar\pl_\infty\pi_\infty||_{L^2(K)}&\leq&||\bar\pl_\infty\pi_j||_{L^2(K)}+||\bar\pl_\infty(\pi_\infty-\pi_j)||_{L^2(K)}\nonumber\\
&=&||\bar\pl_\infty\pi_j||_{L^2(K)}+||\pi_\infty-\pi_j||_{L^2_1(K)}.\nonumber
\eea
The left hand side is independent of $j$, so we would like to send $j$ to infinity proving $||\bar\pl_\infty\pi_\infty||_{L^2(K)}=0$. We have to be careful about the second term on the right since $\pi_j$ only converges to $\pi_\infty$ weakly in $L^2_1(K)$. However, we can achieve strong $L^2_1(K)$ convergence along a subsequence, as will now be demonstrated. Equation $\eqref{connectiondecomp}$ describes how a connection decomposes onto subbundles $\pi_j$ with quotient $Q_j$. From this formula we see that the second fundamental form is just one component of the connection $A_j$, so we have:
\be
\int_K|\ti\nabla_j (\bar\pl_j\pi_j)|^2\o^n\leq\int_K|\nabla_j(A_j)|^2\o^n\leq C,\nonumber
\ee
where $\ti\nabla_j$ is the induced connection on $Hom(Q_j,S_j)$. The bound on the right follows from assumption $i)$. Thus $\pi_j$ is bounded in $L^2_2(K)$, and along a subsequence we have strong convergence in $L^2_1(K)$. It follows that $||\bar\pl_\infty\pi_\infty||_{L^2(K)}=0$. This holds independent of which compact set $K$ we choose, so 
\be
||\bar\pl_\infty\pi_\infty||_{L^2(X\backslash Z_{an})}=||\bar\pl_\infty\pi_\infty||_{L^2(X)}=0,\nonumber
\ee
since $Z_{an}\cup Z({\cal F})$ has complex codimension at least two. Thus $\pi_\infty$ defines a weakly holomorphic $L^2_1$ 
subbundle of $(E_\infty,\bar\pl_\infty)$. Furthermore, because the eigenvalues of the projections $\pi_j$ are either zero or one, we know that rk$(\pi_\infty)$=rk$(\pi_j)$. It also follows that $\mu(\pi)=\mu(\pi_\infty)$, since degree does not depend on a choice of metric.

We now prove $\pi_\infty$ is smooth away from $Z_{an}\cup Z({\cal F})$. Fix an arbitrary compact subset $K\subset(Z_{an}\cup Z({\cal F}))$. By $\eqref{connectiondecomp}$ we see that the second fundamental form  $\gamma_j$ is one component of the decomposition of the connection $A_j$. On $K$ we have smooth convergence of $A_j$ to $A_\infty$ by assumption $i)$, so as a result we know the associated second fundamental forms $\gamma_j$ must converge smoothly to $\gamma_\infty$ as well. Thus along our subsequence, for any $k\in\Z$, we have:
\be
||\gamma_j-\gamma_\infty||_{C^k(K)}\longrightarrow 0.\nonumber
\ee
This smooth convergence of second fundamental forms, in addition to the fact that $\gamma_j=
\bar\pl_j\pi_j$, proves smooth convergence of the projections $\pi_j$. 

\end{proof}

We now show that the assumptions of Proposition $\ref{limit}$ hold for the projections that make up the Harder-Narasimhan filtration $\eqref{HNF}$ along the Yang-Mills flow. The convergence results of Hong and Tian $\cite{HT}$ (also, see $\cite{U}$) imply there exists a subsequence along the Yang-Mills flow that satisfies assumption $i)$. For assumption $ii)$, recall inequality $\eqref{sffb}$:
\be
||\bar\pl_j\pi^{i}_j||^2_{L^2}\leq\int_X|\hat F_j-\Psi_j|\o^n.\nonumber
\ee
$E$ admits an $L^2$ approximate Hermitian structure along the Yang-Mills flow, thus assumption $ii)$ holds for all subsheaves in $\eqref{HNF}$. We therefore get convergence to a limiting filtration away from $Z$:
\be
\pi^1_\infty\subset\cdots\subset\pi^p_\infty=E_\infty.\nonumber
\ee
In the following lemma we prove two important facts about the quotients $Q^i_\infty=\pi^i_\infty/\pi^{i-1}_\infty$:
\begin{lemma}
Each quotient  $Q^i_\infty=\pi^i_\infty/\pi^{i-1}_\infty$ is semi-stable. Furthermore, $E_\infty$ splits as a direct sum:
\be
\label{decomp1}
E_\infty=Q^1_\infty\oplus\cdots\oplus Q^{p-1}_\infty.
\ee
\end{lemma}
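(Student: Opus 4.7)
My plan is to first establish the holomorphic direct sum decomposition by showing that the second fundamental forms of the limiting filtration vanish, and then to deduce semi-stability of each quotient by identifying $Q^i_\infty$ with the stable summand $\hat Q^i_\infty$ from the decomposition $\eqref{decomp2}$.

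First I would apply inequality $\eqref{sffb}$ to each projection $\pi^i$ in the Harder-Narasimhan filtration of $E$. Theorem $\ref{firstmaintheorem}$ yields $||\Lambda F_j-\Psi_j||_{L^2}\to 0$ along the Yang-Mills flow, and since $\omega$ has unit volume this also controls $||\Lambda F_j-\Psi_j||_{L^1}$. Hence assumption $ii)$ of Proposition $\ref{limit}$ is verified for each $\pi^i$, while assumption $i)$ is immediate from the Hong-Tian convergence result. Proposition $\ref{limit}$ therefore produces limiting $L^2_1$ projections $\pi^i_\infty$, smooth on $X\backslash Z$; in fact the proof of that proposition gives the stronger conclusion $\bar\pl_\infty\pi^i_\infty=0$ (not merely the weak-holomorphicity condition $(I-\pi^i_\infty)\bar\pl_\infty\pi^i_\infty=0$), because the bound $||\bar\pl_j\pi^i_j||_{L^2}\to 0$ passes to the strong $L^2_1$ limit. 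The vanishing $\bar\pl_\infty\pi^i_\infty=0$ is precisely the statement that the second fundamental form of $\pi^i_\infty\subset E_\infty$ is zero, so $\pi^i_\infty$ is a genuine holomorphic orthogonal projection. Iterating over $i=1,\ldots,p$ produces a holomorphic orthogonal splitting $E_\infty=\bigoplus_i Q^i_\infty$, where $Q^i_\infty$ is the image of $\pi^i_\infty-\pi^{i-1}_\infty$.

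For semi-stability I would identify $Q^i_\infty$ with the stable summand $\hat Q^i_\infty$ from $\eqref{decomp2}$. Set $\Psi_\infty:=\sum_i\mu(Q^i)(\pi^i_\infty-\pi^{i-1}_\infty)$. The strong $L^2_1$ (and hence $L^2$) convergence $\pi^i_j\to\pi^i_\infty$ established in the proof of Proposition $\ref{limit}$, combined with the uniform $L^\infty$ bound on the projections, gives $\Psi_j\to\Psi_\infty$ in $L^2$. On the other hand, the Hong-Tian convergence gives $\Lambda F_j\to\Lambda F_\infty$ in $C^\infty_{\mathrm{loc}}(X\backslash Z_{an})$, and the uniform $L^\infty$ bound on $\Lambda F$ along the flow upgrades this to $L^2$ convergence globally. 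Theorem $\ref{firstmaintheorem}$ then forces $\Lambda F_\infty=\Psi_\infty$ almost everywhere on $X\backslash Z$. Consequently the image of $\pi^i_\infty-\pi^{i-1}_\infty$ is exactly the $\mu(Q^i)$-eigenspace of $\Lambda F_\infty$, which by $\eqref{fmatrix}$ and the identification $\lambda_i=\mu(Q^i)$ is precisely $\hat Q^i_\infty$. Since each $\hat Q^i_\infty$ carries a smooth Hermitian-Einstein connection and is stable, each $Q^i_\infty$ is in particular semi-stable.

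The step I expect to be the most delicate is the identification $\Psi_\infty=\Lambda F_\infty$ and the ensuing matching of eigenspaces as holomorphic subbundles, not merely as subspaces agreeing almost everywhere. Reconciling the $L^2_1$ convergence of the projections with the $C^\infty_{\mathrm{loc}}$ convergence of connections on $X\backslash Z$, and then propagating this to a smooth identification of $Q^i_\infty$ with $\hat Q^i_\infty$ (so that ranks and slopes match on the nose), is where the core technical effort must go; the smoothness of $\pi^i_\infty$ on $X\backslash Z$ furnished by Proposition $\ref{limit}$ is the essential ingredient that makes this upgrade possible.
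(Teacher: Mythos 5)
Your proposal is correct and follows essentially the same route as the paper: Proposition $\ref{limit}$ together with $\eqref{sffb}$ and Theorem $\ref{firstmaintheorem}$ gives $\bar\pl_\infty\pi^i_\infty=0$, hence the holomorphic orthogonal splitting, and semi-stability comes from identifying each $Q^i_\infty$ with a Hermitian-Einstein (polystable) eigenspace of $\Lambda F_\infty$. The only cosmetic difference is that the paper identifies the induced curvature on each quotient as $\mu(Q^i)I$ by inductively compressing $\Lambda F_\infty$ and counting ranks against $\eqref{fmatrix}$, whereas you pass to the $L^2$ limit in $\|\Lambda F_j-\Psi_j\|_{L^2}\to 0$ to get $\Lambda F_\infty=\Psi_\infty$ and read off the eigenspaces -- the same conclusion, stated slightly more explicitly (note only that the $\hat Q^i_\infty$ of $\eqref{decomp2}$ are the stable summands, so each eigenspace is a direct sum of them rather than a single one; this does not affect the semi-stability conclusion).
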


\begin{proof}
We begin with the subsheaf of highest rank $\pi^{p-1}_\infty$. Because the second fundamental form $||\bar\pl\pi^{p-1}_\infty||^2_{L^2}=0$, the induced curvature on $Q^p_\infty$ is just
\be
\hat F^{Q^p}_\infty=(I-\pi^{p-1}_\infty)\circ \hat F_\infty\circ(I-\pi^{p-1}_\infty).\nonumber
\ee
Thus, because $rk(I_{p})=rk(I-\pi^{p-1}_\infty)$, we know $\hat F^{Q^p}_\infty=\lambda_p I_p$ (where the eigenvalue $\lambda_p$ is defined in $\eqref{fmatrix}$). So $Q^p_\infty$  admits a Hermitian-Einstein connection. By the removable singularity theorem of Bando-Siu from $\cite{BaS}$,  $Q^p_\infty$ extends to a reflexive sheaf on all of $X$. Because it admits a Hermitian-Einstein connection where it is locally free it is semi-stable. 

Now, $||\bar\pl\pi^{p-1}_\infty||^2_{L^2}=0$ implies that $E_\infty$ splits as a direct sum $E_\infty=\pi^{p-1}_\infty\oplus Q^p_\infty$.
This splitting, along with the fact that $||\bar\pl\pi^{p-2}_\infty||^2_{L^2}=0$, implies the second fundamental form with respect to the inclusion $\pi^{p-2}_\infty\subset\pi^{p-1}_\infty$ is zero, from which it follows that:
\be
\hat F^{Q^{p-1}}_\infty=(\pi^{p-1}_\infty-\pi^{p-2}_\infty)\circ \hat F_\infty\circ(\pi^{p-1}_\infty-\pi^{p-2}_\infty).\nonumber
\ee
We continue in this way down the entire filtration. Each $Q^i_\infty$  admits a Hermitian-Einstein connection, and thus it is semi-stable. The decomposition $\eqref{decomp1}$ follows as well.

\end{proof}
Because each $Q^i_\infty$ is semi-stable admitting a Hermitian-Einstein connection, we know $Q^i_\infty$ will decompose into a direct sum of stable bundles. These stable bundles make up the direct sum $\eqref{decomp2}$, and it is on this level that we must construct the isomorphism with $Gr^{hns}(E)$.

\begin{lemma}
\label{AHES}
Given a sequence of connections $A_j$ along the Yang-Mills flow, the induced connections on $Q^i$ realize an $L^1$ approximate Hermitian-Einstein structure.
\end{lemma}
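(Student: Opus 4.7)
The plan is to reduce the statement to Theorem \ref{firstmaintheorem} (the $L^2$ approximate Hermitian structure) together with the modified Chern--Weil bound $\|\nabla \pi^i_j\|_{L^2}^2 \le \|\Lambda F_{A_j}-\Psi_j\|_{L^1}$ that was already extracted in the proof of Proposition \ref{DHFAPS}. Set $P^i_j := \pi^i_j - \pi^{i-1}_j$, which is the orthogonal projection onto $Q^i$ with respect to the evolving holomorphic structure, and let $\gamma^i_j$ denote the second fundamental form of the short exact sequence $0\to S^{i-1}_j \to S^i_j \to Q^i_j \to 0$ obtained by acting with $w_j$. What I want to bound is $\|\Lambda F^{Q^i_j} - \mu(Q^i) I_{Q^i}\|_{L^1}$.

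First I would invoke the curvature decomposition formula \eqref{decompcurv} applied twice (once for $S^i \hookrightarrow E$, once for $S^{i-1}\hookrightarrow S^i$) to write
\be
\Lambda F^{Q^i_j} \;=\; P^i_j \,\Lambda F_{A_j}\, P^i_j \;+\; \Lambda\bigl(\gamma^i_j{}^{\dagger}\wedge\gamma^i_j\bigr) \;+\; E^i_j,\nonumber
\ee
where $E^i_j$ collects the algebraic contributions of the second fundamental forms of the adjacent pieces of the filtration; pointwise $|E^i_j|+|\Lambda(\gamma^i_j{}^{\dagger}\wedge\gamma^i_j)|\le C\sum_k|\bar\partial_j\pi^k_j|^2$. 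On the other hand, because the $\pi^k_j$ are mutually compatible projections along the filtration, $\Psi_j P^i_j = P^i_j \Psi_j = \mu(Q^i)\, P^i_j$, so $P^i_j \Psi_j P^i_j = \mu(Q^i)\, I_{Q^i}$. Subtracting gives
\be
\Lambda F^{Q^i_j} - \mu(Q^i)\, I_{Q^i} \;=\; P^i_j\bigl(\Lambda F_{A_j}-\Psi_j\bigr)P^i_j \;+\; \Lambda\bigl(\gamma^i_j{}^{\dagger}\wedge\gamma^i_j\bigr) + E^i_j.\nonumber
\ee

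Now I would estimate each piece in $L^1$. The first term satisfies $\|P^i_j(\Lambda F_{A_j}-\Psi_j)P^i_j\|_{L^1}\le \|\Lambda F_{A_j}-\Psi_j\|_{L^1}\le \|\Lambda F_{A_j}-\Psi_j\|_{L^2}$, which tends to $0$ by Theorem \ref{firstmaintheorem}. For the remaining terms, the inequality \eqref{sffb} derived from the modified Chern--Weil formula yields
\be
\|\bar\partial_j \pi^k_j\|_{L^2}^2 \;\le\; \|\Lambda F_{A_j}-\Psi_j\|_{L^1},\nonumber
\ee
for every $k$, so $\sum_k\|\bar\partial_j \pi^k_j\|_{L^2}^2\to 0$. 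Hence $\|\,|\gamma^i_j|^2\,\|_{L^1}\to 0$ and similarly for $E^i_j$, giving $\|\Lambda F^{Q^i_j}-\mu(Q^i)I_{Q^i}\|_{L^1}\to 0$.

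The only subtle point, which I regard as the main thing to check carefully, is the bookkeeping in the first step: expressing $\Lambda F^{Q^i_j}$ purely in terms of $\Lambda F_{A_j}$ sandwiched by $P^i_j$ plus second fundamental form squared terms that are controlled by $|\bar\partial_j\pi^k_j|^2$. This is a straightforward but somewhat tedious iteration of \eqref{decompcurv} down the filtration; it is essential that all correction terms that appear are of the form ``second fundamental form wedge its adjoint,'' since only then does the $L^2$ control of $\bar\partial_j\pi^k_j$ translate into $L^1$ control. Once that identity is in hand, the rest is immediate from Theorem \ref{firstmaintheorem} and \eqref{sffb}. Note that one cannot in general improve to an $L^2$ approximate Hermitian--Einstein structure on $Q^i$, because the quadratic second fundamental form terms are only controlled in $L^1$.
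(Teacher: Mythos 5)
Your proposal is correct and follows essentially the same route as the paper: both arguments apply the curvature decomposition \eqref{decompcurv} twice down the filtration, use that $(\pi^i-\pi^{i-1})\Psi_j(\pi^i-\pi^{i-1})=\mu(Q^i)(\pi^i-\pi^{i-1})$ to replace $\Lambda F_{A_j}$ by $\Lambda F_{A_j}-\Psi_j$, and then control the quadratic second fundamental form corrections via \eqref{sffb} and Theorem \ref{firstmaintheorem}. The paper simply organizes the bookkeeping as a chain of $L^1$ inequalities (ending in the constant $6$) rather than as an identity with an error term $E^i_j$, but the content is identical.
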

\begin{proof}
For a subbundle $\pi^i$ in the Harder-Narasimhan filtration, the induced curvature satisfies the following inequality:
\be
\int_X|\hat F^{S^i}|\o^n\leq\int_X|\pi^i\circ \hat F\circ \pi^i|\o^n+||\bar\pl\pi^i||^2_{L^2}.\nonumber
\ee
Now, because the second fundamental form for the inclusion $\pi^{i-1}\subset\pi^i$ is given by $\bar\pl\pi^{i-1}-\bar\pl\pi^i$, the induced curvature on $Q^i=S^i/S^{i-1}$ satisfies the following:
\bea
\int_X|\hat F^{Q^i}|\o^n&\leq&\int_X|(I-\pi^{i-1})\circ \hat F^{S^i}\circ (I-\pi^{i-1})|\o^n+||\bar\pl\pi^i||^2_{L^2}\nonumber\\
&&+||\bar\pl\pi^{i-1}||^2_{L^2}+2||\bar\pl\pi^i||_{L^2}||\bar\pl\pi^{i-1}||_{L^2}.\nonumber
\eea
Putting the last two inequalities together we see:
\bea
\int_X|\hat F^{Q^i}|\o^n&\leq&\int_X|(\pi^i-\pi^{i-1})\circ \hat F\circ (\pi^i-\pi^{i-1})|\o^n+2||\bar\pl\pi^i||^2_{L^2}\nonumber\\
&&+||\bar\pl\pi^{i-1}||^2_{L^2}+2||\bar\pl\pi^i||_{L^2}||\bar\pl\pi^{i-1}||_{L^2}.\nonumber
\eea
Thus, along a subsequence $A_j$ we have the following:
\bea
\int_X|\hat F_j^{Q^i}-\mu(Q^i)I|\o^n&\leq&\int_X|(\pi^i-\pi^{i-1})\circ(\hat F_j-\Psi_j)\circ (\pi^i-\pi^{i-1})|\o^n+2||\bar\pl_j\pi^i_j||^2_{L^2}\nonumber\\
&&+||\bar\pl_j\pi_j^{i-1}||^2_{L^2}+2||\bar\pl_j\pi^i_j||_{L^2}||\bar\pl_j\pi^{i-1}_j||_{L^2}.\nonumber
\eea
Now we apply $\eqref{sffb}$ to get the desired estimate:
\be
\int_X|\hat F_j^{Q^i}-\mu(Q^i)I|\o^n\leq 6\int_X|\hat F_j-\Psi_j|\o^n.\nonumber
\ee
This completes the lemma.
\end{proof}

We now turn to convergence of the Seshadri filtrations. Since each quotient $Q^i$ in the Harder-Narasimhan filtration is semi-stable, it admits a Seshadri filtration
\be
\label{SF1}
0\subset \ti S^1_i\subset \ti S^2_i\subset\cdots\subset \ti S^q_i=Q^i,
\ee
where $\mu(\ti S^k_i)=\mu(Q^i)$ for all $k$, and each quotient $\ti Q^k_i=\ti S^k_i/\ti S^{k-1}_i$ is torsion free and stable. Here, just as in Section $\ref{firstsec}$, the subscript $i$ on $\ti S^k_i$ denotes that we are working with the Seshadri filtration from the $i$-th quotient from the Harder-Narasimhan filtration. Let
\be
0\subset \ti \pi^1_i\subset \ti \pi^2_i\subset\cdots\subset \ti \pi^{q-1}_i\subset Q^i\nonumber
\ee
be the filtration of $L^2_1$ projections corresponding to $\eqref{SF1}$. We show for all $k$ that the sequence of projections $(\ti\pi^k_i)_j$ converges to a limiting projection $(\ti\pi^k_i)_\infty$ in $L^2_1$ along a subsequence. To do so we need to check that this sequence satisfies the assumptions $i)$ and $ii)$ from Proposition $\ref{limit}$. To begin, fix a compact subset $K\subset (X\backslash Z)$. On $K$ we note that the projections $\pi^i$ defining the Harder-Narasimhan filtration are smooth, and using equation  $\eqref{connectiondecomp}$ we can see how a sequence of connections $A_j$ along the Yang-Mills flow decomposes onto this filtration. Because this decomposition is orthogonal with respect to $H_0$, we have that the second fundamental form terms and the induced connections on each quotient converge smoothly as well. This shows assumption $i)$ holds for the sequence of induced connections $A^{Q^i}_j$ on each quotient. To see assumption $ii)$, we use the following modification of the Chern-Weil formula:
\be
\label{CW}
\mu(\ti S^k_i)=\mu(Q^i)+\dfrac{1}{rk(\ti S^k_i)}(\int_X{\rm Tr}((\hat F_j^{Q^i}-\mu (Q^i) I)\circ (\ti\pi^k_i)_j)\o^n-||\bar\pl_j(\pi^k_i)_j||_{L^2}^2).
\ee 
Because $\mu(\ti S^k_i)=\mu(Q^i)$, we have
\be
||\bar\pl_j(\ti\pi^k_i)_j||_{L^2}^2=\int_X{\rm Tr}((\hat F_j^{Q^i}-\mu (Q^i) I)\circ (\ti\pi^k_i)_j)\o^n,\nonumber
\ee
which goes to zero by Lemma $\ref{AHES}$. This verifies assumption $ii)$. Thus we can apply Proposition $\ref{limit}$ to $(\ti\pi^k_i)_j$,  and get that the Seshadri filtration converges to a limiting filtration:
\be
0\subset (\ti \pi^1_i)_\infty\subset (\ti \pi^2_i)_\infty\subset\cdots\subset (\ti \pi^{q-1}_i)_\infty\subset Q^i_\infty.\nonumber
\ee
Since the norms of the second fundamental forms go to zero, this filtration decomposes $Q^i_\infty$ into a direct sum of quotients $(\ti Q^k_i)_\infty$. In fact, at this point in the argument we can apply the author's previous work to construct an isomorphism between  $\bigoplus_k\ti Q^k_i$ and  $\bigoplus_k(\ti Q^k_i)_\infty$, which follows exactly from Theorem 1 from $\cite{J2}$ . Thus, on $X\backslash Z$ we have an isomorphism between $Q^i_\infty$ and $Gr^s(Q^i)$. In fact, as described explicitly in the proof of Theorem 1 from  $\cite{J2}$, the construction of an isomorphism $Q^i_\infty$ and $Gr^s(Q^i)$ starts by considering the subsheaf of lowest rank from $Gr^s(Q^i)$, and working with subsheaves of higher and higher rank until an isomorphism has been constructed for the entire filtration. Since this process is independent of $i$, applying this argument inductively to each quotient sheaf $Q^i$, we construct an isomorphism between $Gr^{hns}(E)$ and $\bigoplus_i\bigoplus_k (\ti Q^k_i)_\infty$. Since the direct sum of quotients from any Seshadri filtration is unique, we know $\bigoplus_i\bigoplus_k (\ti Q^k_i)_\infty$ is isomorphic to $\bigoplus_p\hat Q^p_\infty$ (from $\eqref{decomp2}$), proving Proposition $\ref{isom1}$.

We have now constructed, on $X\backslash Z$, the following isomorphism:
\be
\label{isom2}
Gr^{hns}(E)\cong E_\infty.
\ee
In order to prove Theorem $\ref{main theorem}$,  we need to show this isomorphism can be extended to an isomorphism between $Gr^{hns}(E)^{**}$ and the Bando-Siu extension $\hat E_\infty$ on all of $X$. As a first step we show that $E_\infty$ can be extended over $Z$ as the reflexive sheaf $Gr^{hns}(E)^{**}$. To do so, notice that:
\be
\label{finalcong}
\Gamma(X\backslash Z,Gr^{hns}(E))\cong\Gamma(X\backslash Z,Gr^{hns}(E)^{**}),
\ee
since $Gr^{hns}(E)$ is locally free on $X\backslash Z$. Since all holomorphic functions can be extended over $Z$ by a result of Shiffman from $\cite{Shiff}$, and because $Gr^{hns}(E)^{**}$ is reflexive it is defined by $Hom(Gr^s(E)^{*},\cO)$, we have: 
\be
\Gamma(X\backslash Z,Gr^{hns}(E)^{**})\cong\Gamma(X,Gr^{hns}(E)^{**}).\nonumber
\ee
Combining this isomorphism with $\eqref{finalcong}$ we have
\be
\Gamma(X\backslash Z,Gr^{hns}(E))\cong\Gamma(X,Gr^{hns}(E)^{**}).
\ee
Thus, using $\eqref{isom2}$, we see that $E_\infty$ extends over the singular set $Z$ as the reflexive sheaf $Gr^{hns}(E)^{**}$.

As stated in $\cite{BaS}$, the existence of a Bando-Siu extension $\hat E_\infty$ is a consequence of Bando's removable singularity theorem $\cite{Ba}$ and Siu's slicing theorem $\cite{Siu1}$. Aside from those references, we also direct the reader to $\cite{J2}$ for a detailed description of the Bando-Siu sheaf extension in this case. The relevant fact for us is the uniqueness of the sheaf extension which is proven in $\cite{Siu1}$.  This uniqueness theorem is characterized by the fact that given any other reflexive extension (in our case $Gr^{hns}(E)^{**}$), there exists a sheaf isomorphism $\phi:\hat E_\infty\longrightarrow Gr^{hns}(E)^{**}$ on $X$, which restricts to the isomorphism constructed in Proposition $\ref{isom1}$ on $X\backslash Z$. This completes the proof of Theorem $\ref{main theorem}$.

Thus even though we do not know whether $Z_{an}$ depends on the subsequence $A_j$, the limiting reflexive sheaf $\hat E_\infty$ (defined on all of $X$) is canonical and does not depend on the choice of subsequence. We have the following corollary of Theorem $\ref{main theorem}$:

\begin{corollary}
\label{cor}
The algebraic singular set $Z_{alg}$ is contained in the analytic singular set $Z_{an}$.
\end{corollary}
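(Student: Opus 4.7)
My strategy is to argue by contrapositive: I will show that for any $x \in X \setminus Z_{an}$, the sheaf $Gr^{hns}(E)$ is locally free at $x$, so that $x \notin Z_{alg}$. The starting point is Theorem \ref{main theorem}, which furnishes a sheaf isomorphism $\phi : \hat E_\infty \to Gr^{hns}(E)^{**}$ on $X$ that restricts, on $X \setminus Z$, to the explicit isomorphism built in Proposition \ref{isom1}. Since the Hong--Tian convergence theorem gives that $E_\infty = \hat E_\infty$ is a smooth holomorphic vector bundle on $X \setminus Z_{an}$, the isomorphism $\phi$ immediately yields that $Gr^{hns}(E)^{**}$ is locally free on $X \setminus Z_{an}$. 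This is not yet enough, because a torsion-free sheaf can fail to be locally free even where its double dual is; so the real work is to upgrade the isomorphism $E_\infty \cong Gr^{hns}(E)$ from $X \setminus Z$ to all of $X \setminus Z_{an}$.

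The plan is to trace the construction of Proposition \ref{isom1}. The isomorphism there is assembled from limits of the holomorphic inclusion maps $f^i : S^i \to E$ of the Harder--Narasimhan filtration and the analogous inclusions $\tilde\iota^{\,k}_i : \tilde S^{\,k}_i \to Q^i$ of the Seshadri filtrations, each acted on by the complex gauge transformations $w_j$ of the Yang--Mills flow. These maps are globally defined holomorphic sheaf homomorphisms on $X$, and, after suitable normalization, converge on $X \setminus Z_{an}$ (invoking the Hong--Tian convergence and arguments parallel to Proposition \ref{limit}) to limiting holomorphic sheaf maps $\tilde f^{\,k}_{i,\infty} : \tilde S^{\,k}_i \to E_\infty$ and correspondingly $\tilde Q^{\,k}_i \to \hat Q^{\,k}_{i,\infty}$, where $\hat Q^{\,k}_{i,\infty}$ is the smooth stable summand appearing in the decomposition \eqref{decomp2}. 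Since the flow is smoothly convergent on $X \setminus Z_{an}$ and the limiting connection is Yang--Mills there, these limiting maps are actually defined on all of $X \setminus Z_{an}$, not just $X \setminus Z$.

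The next step is to verify that each limiting map $\tilde Q^{\,k}_i \to \hat Q^{\,k}_{i,\infty}$ is a sheaf isomorphism on $X \setminus Z_{an}$, not merely on $X \setminus Z$. By construction the map has the same generic rank as both source and target and is an isomorphism on the dense open $X \setminus Z$. Injectivity on $X \setminus Z_{an}$ follows because the source is torsion-free and the target is locally free, and the map has full generic rank. Surjectivity is the delicate point: its cokernel is a coherent sheaf supported on $Z_{alg} \cap (X \setminus Z_{an})$, which has complex codimension at least two. Using that the target $\hat Q^{\,k}_{i,\infty}$ is a smooth bundle on $X \setminus Z_{an}$, one can identify the image with a full-rank coherent subsheaf of a locally free sheaf whose saturation equals the whole target (because equality holds on $X \setminus Z$ and one then invokes the Bando--Siu uniqueness of reflexive extensions across the codim-$\geq 2$ set $Z_{alg} \cap (X \setminus Z_{an})$ to identify the extension with $\hat Q^{\,k}_{i,\infty}$ itself rather than merely $(\tilde Q^{\,k}_i)^{**}$). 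Combining the isomorphisms over all $i,k$ and taking direct sums gives $Gr^{hns}(E) \cong E_\infty$ on $X \setminus Z_{an}$, proving that $Gr^{hns}(E)$ is locally free on $X \setminus Z_{an}$, which is exactly $Z_{alg} \subseteq Z_{an}$.

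The principal obstacle I expect is in the third step, namely certifying that the limit map $\tilde Q^{\,k}_i \to \hat Q^{\,k}_{i,\infty}$ is surjective at points $x \in Z_{alg} \setminus Z_{an}$, where the source sheaf is a priori not locally free while the target is a smooth bundle. This is the content that separates the present corollary from the naive observation that $Gr^{hns}(E)^{**}$ is locally free outside $Z_{an}$: one genuinely must use that the holomorphic sheaf map arising from the Yang--Mills limit is defined on all of $X \setminus Z_{an}$, together with the sheaf-theoretic uniqueness of the Bando--Siu extension, to rule out the scenario in which $\tilde Q^{\,k}_i \subsetneq (\tilde Q^{\,k}_i)^{**}$ on $X \setminus Z_{an}$.
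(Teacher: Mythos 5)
Your core argument --- each stable quotient $\tilde Q^k_i$ of $Gr^{hns}(E)$ is identified with a summand of $E_\infty$, which is a holomorphic vector bundle on $X\setminus Z_{an}$, hence $\tilde Q^k_i$ is locally free there --- is exactly the paper's proof, which runs it as a two-line contradiction rather than a contrapositive and stops there. Where you go further is in noticing that Theorem \ref{main theorem} literally provides a global isomorphism only with $Gr^{hns}(E)^{**}$, while the isomorphism with $Gr^{hns}(E)$ itself (Proposition \ref{isom1}) is constructed only on $X\setminus Z$ with $Z=Z_{an}\cup Z_{alg}$ --- precisely missing the points one needs to say something about. That observation is legitimate, and the paper's own proof does not engage with it.

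However, the machinery you propose to close this gap cannot do so, and you essentially concede as much in your final paragraph. Uniqueness of reflexive extensions across a set of codimension at least two identifies $\hat Q^k_{i,\infty}$ with $(\tilde Q^k_i)^{**}$ and, by its very nature, is blind to the difference between $\tilde Q^k_i$ and $(\tilde Q^k_i)^{**}$: the two sheaves agree outside a codimension-two set, which is all that such an extension statement ever sees. Likewise, the assertion that the saturation of the image of the limit map equals the whole target is just a restatement that the double dual of the image is the target; it does not yield surjectivity of the map itself, which is equivalent to the local freeness you are trying to prove (a generically injective map from a torsion-free sheaf of the same rank onto a bundle is surjective iff it is an isomorphism). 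So the step you call the ``principal obstacle'' --- ruling out $\tilde Q^k_i\subsetneq(\tilde Q^k_i)^{**}$ on $X\setminus Z_{an}$ --- is genuinely unresolved in your outline, and no ingredient you list (Hong--Tian convergence, Proposition \ref{limit}, Bando--Siu uniqueness) supplies it. Some new input would be needed, for instance extending the \emph{inverse} of the isomorphism of Proposition \ref{isom1} holomorphically across $Z_{alg}\setminus Z_{an}$ as a sheaf map and arguing that the two compositions remain the identity; note this itself is delicate because a torsion-free sheaf need not be normal, so such an extension is not automatic. If you intend the corollary only at the level the paper's displayed proof actually delivers, the two-line version suffices and the rest of your construction is unnecessary; if you intend to repair the double-dual subtlety, the repair is still missing.
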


\begin{proof}

We prove $Z_{alg}\subseteq Z_{an}$. Suppose there exists a point $x_0\in Z_{alg}$ which is not in $Z_{an}$. We know there exists a quotient $\ti Q^k_i$ from $Gr^{hns}(E)$ such that $\ti Q^k_i$ is not locally free at $x_0$. Yet by Theorem $\ref{main theorem}$ we know $Q^i$ is isomorphic to some $Q^i_\infty$ from the direct sum $E_\infty=\oplus_p\hat Q^p_\infty$, and since $E_\infty$ is a vector bundle off $Z_{an}$ we know $\ti Q^k_i$ is locally free there. 

\end{proof}

\end{normalsize}
\newpage

\end{document}